\def\ni{\noindent}
\def\ee{\varepsilon}
\def\beq{\arraycolsep1pt\begin{eqnarray*}}
\def\eeq{\end{eqnarray*}}
\newcommand{\MM}{{\cal M}}
\newcommand{\B}{{\mathbb B}}
\newcommand{\Z}{{\mathbb Z}}
\newcommand{\R}{{\mathbb R}}
\newcommand{\cat}{{\rm{cat}}}
\newcommand{\cl}{{\rm{cl}}}
\newcommand{\hess}{{\rm{Hess}}}
\newcommand{\dist}{{\rm{dist}}}
\newcommand{\half}{{\textstyle{\frac{1}{2}}}}
\font\caps=cmcsc10
\newtheorem{theorem}{Theorem}[section]
\newtheorem*{theorem*}{Theorem}
\newtheorem{lemma}[theorem]{Lemma}
\newtheorem{proposition}[theorem]{Proposition}
\newtheorem{definition}[theorem]{Definition}
\title{A Poincar\'e\,--\,Birkhoff theorem for Hamiltonian flows\\ on nonconvex domains.}
\author{Alessandro Fonda and Antonio J. Ure\~na\footnote{Corresponding author}}
\date{}
\begin{document}

\maketitle

\begin{quote}
\small
{\caps Abstract.} 
We present a higher-dimensional version of the Poincar\'e-Birkhoff theorem which applies to Poincar\'e time maps of Hamiltonian systems.  The maps under consideration are neither required to be close to the identity nor to have a monotone twist. The annulus is replaced by the product of an $N$-dimensional torus and the interior of a $(N-1)$-dimensional (not necessarily convex) embedded sphere; on the other hand, the classical boundary twist condition is replaced by an avoiding rays condition.
\end{quote}

\begin{section}{Introduction}

The Poincar\'e\,--\,Birkhoff theorem is a foundational result of symplectic topology, first inferred by Poincar\'e shortly before his death~\cite{Poi}, and proved in its full generality by Birkhoff some years later~\cite{Bir1, Bir3}. In broad terms, it  states the existence of at least two fixed points of an area-preserving homeomorphism of the (closed) planar annulus, provided that it keeps both boundary circles invariant, while rotating them in opposite senses. It has been extended in different directions, and subsequently widely applied to the study of the dynamics of (planar) Hamiltonian systems. See, e.g.,~\cite{BroNeu} or~\cite[Ch. 2]{MosZeh} for a more precise description of this result, and~\cite{FonSabZan, LeC} for two recent review papers, including corresponding lists of references.

\medbreak

The efforts to generalize this theorem to higher dimensions go back to Birkhoff himself~\cite{Bir}, and have been later continued in many works, including~\cite{Arn, BosOrt, ConZeh, FonUre1, Gol, Mos, MosZeh, Szu1, Wei}. Out of these extensions, we shall be particularly interested in the version due to Moser and Zehnder~\cite[Theorem~2.21, p. 135]{MosZeh}, which we recall next. Let $\mathcal S\subseteq\R^N$ be a smooth, compact and convex hypersurface bounding some open region ${\rm int}\,\mathcal S$, and let the smooth map $\mathcal P:\R^N\times\overline{{\rm int}\,\mathcal S}\to\R^N\times\R^N$ 
be given. It is assumed to be an exact symplectic diffeomorphism into its image and to have the form
\begin{equation}\label{P}
\mathcal P(x,y) = (x + \vartheta(x,y),\rho(x,y))\,,\qquad (x,y)\in\R^N\times\overline{{\rm int}\,\mathcal S}\,,
\end{equation}
where both maps $\vartheta,\rho$ are $2\pi$-periodic in each of the first $N$ variables $x_1,\ldots, x_N$.  Suppose further that there exists some $c\in{\rm int}\,\mathcal S$ such that
\begin{equation}\label{twist}
\langle \vartheta(x,y),y-c\rangle>0\,,\;\hbox{ \rm{for every} }(x,y)\in\R^N\times\mathcal S\,.
\end{equation}
Under the additional condition that $\mathcal P$ is either close to the identity or satisfies a monotone twist condition, the Moser\,--\,Zehnder theorem ensures the existence of at  least $N+1$ geometrically distinct  fixed points of $\mathcal P$ in $\R^N\times{{\rm int}\,\mathcal S}$. Incidentally, we observe that this result is also valid if inequality (\ref{twist}) is reversed.

\medbreak
     
A natural question which appears here is {\em how this theorem can be extended to nonconvex hypersurfaces $\mathcal S$}. In this paper we answer to this question in the case that $\mathcal S$ is diffeomorphic to the sphere and $\mathcal P$ is the Poincar\'e map associated to some time-periodic Hamiltonian system. In addition, we shall get rid of the closedness to the identity/monotone twist requirements.

\medbreak

More precisely, we consider the  Hamiltonian system
$$
\dot z=J\nabla H(t,z)\,,
\leqno{\hspace{5mm}(HS)}
$$
where
$J=
{\footnotesize{
\Big(
\begin{array}{cc}
0 & I_N \\
-I_N & \;0
\end{array}
\Big) }} 
$ 
denotes the standard $2N\times 2N$ symplectic matrix. The Hamiltonian
$H:\R\times\R^{2N}\to\R$, $H= H(t,z)=H(t,x,y)$
 is assumed $T$-periodic in the time variable $t$, $2\pi$-periodic in the
first $N$ state variables $x_1,\dots,x_N$, and continuously
differentiable with respect to $z=(x,y)$, with associated gradient $\nabla H=\nabla_z H$. 

\medbreak

Once a $T$-periodic solution $z(t)=(x(t),y(t))$ has been
found, many others appear by just adding an integer multiple of
$2\pi$ to some of the components $x_i(t)$; for this reason, we will
call {\em geometrically distinct} two periodic solutions of $(HS)$  which can not be obtained from each other
in this way. If the Hamiltonian $H=H(t,z)$ is $C^2$-smooth with respect to $z$, the $T$-periodic solution $z$ is called {\em nondegenerate} if $1$ is not a Floquet multiplier of the associated linearized equation.

\medbreak

Let $\mathcal S\subseteq\R^N$ be a ($C^1$-smooth) embedded sphere, i.e., a $C^1$-submanifold which is $C^1$-diffeomorphic to the standard sphere $\mathbb S^{N-1}$, and let  $\nu:\mathcal S\to\R^N$ be the (continuously defined) unit outward normal vectorfield.
At a point $y\in\mathcal S$, the inward and outward rays are defined by
$$
\mathscr R_-(y):=\big\{-\lambda \nu(y):\lambda\ge0\big\}\,,\qquad\qquad \mathscr R_+(y):=\big\{\lambda \nu(y):\lambda\ge0\big\}\,,
$$ 
respectively. We shall say that the flow of the Hamiltonian system~$(HS)$ satisfies the {\em  avoiding outward rays condition} relatively to ${\cal S}$ if every solution $z(t)=(x(t),y(t))$ of~$(HS)$ with $y(0)\in\overline{{\rm int}\,{\cal S}}$ is defined for every $t\in[0,T]$, and if $y(0)\in\mathcal S$ then one has
\begin{equation}
\label{arc}x(T)-x(0)\notin{\mathscr R}_+(y(0))\,.
\end{equation}
The avoiding inward rays condition is defined by replacing ${\mathscr R}_+(y(0))$ with ${\mathscr R}_-(y(0))$ in the definition above. The main result of this paper is the following:

\begin{theorem}\label{thmmain1}
 Let  the flow of~$(HS)$ satisfy the avoiding outward (or inward) rays condition relatively to the embedded sphere ${\cal S}$. Then, $(HS)$ has at least $N+1$ geometrically distinct $T$-periodic solutions $z^{(0)},\dots,z^{(N)}$ with 
	$
	z^{(k)}(0)\in\mathbb R^N\times{\rm int}\,{\mathcal S}\,,\hbox{ \rm for }k=0,\dots,N.$
	In addition, if the Hamiltonian function $H=H(t,z)$ is twice continuously differentiable with respect
	to $z$ and the $T$-periodic solutions with initial condition on $\R^N\times {\rm int}\,{\mathcal S}$ are nondegenerate, then there are at least $2^N$ of them.
\end{theorem}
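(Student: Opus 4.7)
The plan is to reduce the problem to a finite-dimensional critical point question on the torus $\mathbb{T}^N$, whose Lusternik-Schnirelman category $\cat(\mathbb{T}^N)=N+1$ and sum of Betti numbers $\sum_k\beta_k(\mathbb{T}^N)=2^N$ furnish the two lower bounds in the statement. The first preparatory step is to extend $H$ to a Hamiltonian $\widetilde H:\R\times\R^{2N}\to\R$ which coincides with $H$ on an open neighbourhood of $\R\times\R^N\times\overline{{\rm int}\,\mathcal{S}}$, is still $T$-periodic in $t$ and $2\pi$-periodic in $x$, and has the property that every $T$-periodic solution of the modified system has $y(0)\in{\rm int}\,\mathcal{S}$. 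I would build the extension outside $\overline{{\rm int}\,\mathcal{S}}$ so that, in a tubular neighbourhood of $\mathcal{S}$, the $x$-velocity carries a strong component in the direction of the outward normal at the nearest point of $\mathcal{S}$, while $\dot y$ is pushed back toward $\mathcal{S}$. The avoiding outward rays condition then rules out $T$-periodic solutions with $y(0)\in\mathcal{S}$, since such a solution would have $x(T)-x(0)\in\mathscr{R}_+(y(0))$; trajectories venturing into the exterior similarly fail to close up, because the forced outward drift in $x$ is incompatible with $T$-periodicity modulo $2\pi\Z^N$. The avoiding inward rays case is handled symmetrically.

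With the extension in place, the time-$T$ Poincar\'e map $\widetilde{\mathcal P}$ of $\widetilde H$ descends to an exact symplectic diffeomorphism of $T^*\mathbb{T}^N$, and its fixed points are in one-to-one correspondence with the geometrically distinct $T$-periodic solutions of the original $(HS)$ having initial condition in $\R^N\times{\rm int}\,\mathcal{S}$. The second step is to obtain a finite-dimensional reduction for this fixed-point problem, along the lines developed in the convex case in~\cite{FonUre1}: a Chaperon-type broken-geodesic decomposition of $[0,T]$ into sufficiently small subintervals yields a generating family whose critical points correspond bijectively to the fixed points of $\widetilde{\mathcal P}$, and a Lyapunov-Schmidt reduction along the fibre variables produces a smooth function on $\mathbb{T}^N$ with the same critical point structure.

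The final step is topological. Lusternik-Schnirelman theory on $\mathbb{T}^N$ gives at least $\cat(\mathbb{T}^N)=N+1$ critical points, producing $N+1$ geometrically distinct $T$-periodic solutions of $(HS)$ in $\R^N\times{\rm int}\,\mathcal{S}$. When $H$ is $C^2$-smooth in $z$ and all the $T$-periodic orbits with initial condition in $\R^N\times{\rm int}\,\mathcal{S}$ are nondegenerate, the critical points of the reduced function are likewise nondegenerate, and the Morse inequalities together with $\sum_{k=0}^N\beta_k(\mathbb{T}^N)=2^N$ deliver the sharper count.

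The genuinely new step is the construction of the extension in Step 1, and this is where I expect the main difficulty. The Moser-Zehnder twist condition $\langle\vartheta,y-c\rangle>0$ rules out an entire half-space of displacement directions and cooperates with the convexity of $\mathcal{S}$ via radial projection from the centre $c$; by contrast, the avoiding outward rays condition forbids only a single ray, and the nonconvexity of $\mathcal{S}$ removes any natural choice of centre. The extension of $H$ outside $\overline{{\rm int}\,\mathcal{S}}$ therefore has to be adapted to the intrinsic geometry of $\mathcal{S}$ through the outward normal field and the nearest-point projection on a tubular neighbourhood, so that the induced outward drift on $\mathcal{S}$ lands exactly in the ray forbidden by the hypothesis and no spurious $T$-periodic orbits are created in the collar where the modification is active. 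Once the extension is in place, the rest of the argument follows the template of~\cite{FonUre1}.
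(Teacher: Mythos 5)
Your proposal diverges from the paper at two points, and each contains a genuine gap.

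The first gap is in Step 1. You want the modified flow in a collar around $\mathcal S$ to carry $\dot x$ along the outward normal while "$\dot y$ is pushed back toward $\mathcal S$," but in a Hamiltonian system the two are linked by $\dot x=\partial_y\widetilde H$, $\dot y=-\partial_x\widetilde H$ and cannot be prescribed independently; in fact the paper's modification is independent of $x$ in the relevant collar, so $\dot y$ is not confined there at all. What the paper actually constructs is a \emph{basket function}: a $C^2$ map $h:\R^N\to[0,1[$ vanishing on ${\rm int}\,\mathcal S$, with $\nabla h\ne 0$ on ${\rm ext}\,\mathcal S$, whose normalized gradient tends to $\nu$ along $\mathcal S$, and with $h\to1$, $\nabla h\to0$, $\hess\,h\to0$ as $|y|\to\infty$ (Section~\ref{sec3}). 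This is exactly where nonconvexity bites: one cannot in general make such an $h$ grow quadratically outside a nonconvex sphere while keeping $\nabla h$ nonvanishing, as was possible in the convex case of~\cite{FonUre1}, so the modified Hamiltonian is forced to have a \emph{finite limit} at $|y|\to\infty$ rather than quadratic growth. Your sketch does not resolve this, and without it the rest of the plan has nothing to stand on.

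This undermines your Steps 2--3. A Chaperon-type generating family for the resulting flow would not be quadratic at infinity in the fiber variables, because the modified Hamiltonian merely flattens out to a constant as $|y|\to\infty$; the standard generating-function machinery for the Arnold conjecture on $(\R/2\pi\Z)^N$ therefore does not apply as written, and the Palais--Smale condition fails at the critical level $\ell$. Moreover, even granting a Conley--Zehnder finite-dimensional reduction, the Lyapunov--Schmidt step that is supposed to solve out the fiber variables requires the fiber Hessian to be invertible everywhere, which is not available: the paper's Lemma~\ref{lem8} reduces to a function on a finite-dimensional $F\times\mathcal V$, not on $\mathcal V$ alone. The paper's substitute for the missing coercivity is the abstract Theorem~\ref{absth1} for functionals $\Phi_\psi(e,v)=\tfrac12\langle Le,e\rangle+\psi(e,v)$ having a finite limit $\ell$ as $\|e_0\|\to\infty$ along $E_0=\ker L$: the $N+1$ bound comes from a Szulkin-style relative category tailored to this class (Section~\ref{sec6}), and the $2^N$ bound from the finite-dimensional reduction plus relative Morse theory on $F\times\mathcal V$ together with an intensifying-the-saddle step (Sections~\ref{sec7}--\ref{sec8}). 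Nothing in your proposal plays the role of these ingredients.
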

We observe that, the Hamiltonian function $H=H(t,z)$ being just continuously differentiable with respect to $z$,  uniqueness for initial value problems may fail and the Poincar\'e time-$T$ map $\mathcal P$ may be multivalued. However, in case uniqueness for initial value problems holds, then $\mathcal P$ has the form (\ref{P}) for some maps $\vartheta,\rho$ which are $2\pi$-periodic in each variable $x_i$. If, moreover, the embedded sphere $\cal S$ is convex, then the avoiding outward rays condition (\ref{arc}) is less restrictive than the Moser-Zehnder condition (\ref{twist}).

\medbreak

The avoiding rays conditions have implications on the Brouwer degree of the maps $\vartheta(x,\cdot):\overline{{\rm int}\,\mathcal S}\to\mathbb R^N$. Indeed, under the avoiding {\em inward} rays condition, ${\rm deg}\big(\vartheta(x,\cdot),{\rm int}\,\mathcal S,0\big)=1$ for every $x\in\mathbb R^N$, while if the avoiding {\em outward} rays condition holds, ${\rm deg}\big(\vartheta(x,\cdot),{\rm int}\,\mathcal S,0\big)=(-1)^N$ for every $x\in\mathbb R^N$. This is easy to check, since the maps $\vartheta(x,\cdot):\overline{{\rm int}\,\mathcal S}\to\mathbb R^N$ can be connected by homotopies to other maps which coincide on  $\cal S$ with the outer normal vectorfield $\nu$ (in the case of the avoiding inward rays condition) or the inner normal vectorfield  $-\nu$ (in the case of the avoiding outward rays condition); subsequently, the degrees can be computed, e.g., by using the main theorem of \cite{Ama}. We do not know whether the avoiding rays conditions in Theorem \ref{thmmain1} can be replaced by these more general assumptions on the topological degrees.

\medbreak

In a recent paper \cite{FonUre1} we have shown a result which includes Theorem \ref{thmmain1} when $\mathcal S$ is convex. The strategy of the proof consisted roughly in modifying the Hamiltonian so that it becomes quadratic in a neighborhood of infinity and subsequently applying  some theorems by Szulkin \cite{Szu1,Szu2}. A key step in this procedure consisted in finding a smooth function $h:\mathbb R^N\to\mathbb R$ with 
$$h\equiv 0\text{ on }{\rm int}\,\mathcal S\,,\qquad \nabla h(y)\not =0\text{ for all }y\in{\rm ext}\,\mathcal S\,,\qquad h(y)=|y|^2\text{ for $|y|$ large enough.}$$ We do not know whether such a function exists in general if $\mathcal S$ is a nonconvex embedded sphere. Instead, in Section \ref{sec3} we construct a function $h$ (which we call a {\em basket function}) which satisfies the first two properties above and  has a finite limit at infinity. This leaves us outside the framework of \cite{Szu1,Szu2}, prompting us to develop a new general result (Theorem \ref{szumod}) on the existence and multiplicity of periodic solutions for some Hamiltonian systems $(HS)$ which are periodic in the $x_i$ variables and have a finite limit when $|y|\to+\infty$. Theorem~\ref{szumod}, in its turn, will be deduced from an abstract critical point theorem for strongly indefinite functionals (Theorem~\ref{absth1}). Finally, the proof of Theorem~\ref{absth1} will be divided between Section~\ref{sec6} (for the general, possibly degenerate case) and Sections~\ref{sec7}-\ref{sec8} (for the nondegenerate case). We collect some known material needed in Sections \ref{sec6}-\ref{sec7} in the Appendix (Section \ref{appendix}), at the end of the paper.

\medbreak

If the number of degrees of freedom is $N=1$ then the embedded sphere $\mathcal S$ becomes a two-point set and Theorem \ref{thmmain1} becomes the particularization  for Hamiltonian systems of the usual Poincar\'e-Birkhoff theorem, see e.g. \cite{BroNeu} (or  \cite[Theorem 1.1]{FonUre1} if there is no uniqueness). For these reasons, throughout this paper  we shall be concerned only with dimensions $N\geq 2$.

\end{section}

\begin{section}{Embedded spheres in $\R^N$}\label{sec3}

 The goal of this section is to present a couple of elementary features of embedded spheres in $\R^N$: a regularization result and the existence of some associated functions which we  call `basket functions'. We begin with the regularization result; with this aim we choose some $C^1$-smooth embedded sphere  $\mathcal S\subseteq\R^N$ and denote by  $\nu:\mathcal S\to\mathbb R^N$ to its outward normal vectorfield.

\begin{lemma}\label{intsph}
Let  $K\subseteq{\rm int}\,\mathcal S$ be a compact set and let $\varepsilon>0$ be given. Then,
there exists a $C^\infty$-smooth embedded sphere ${\mathcal S}_*\subseteq\R^N$, with associated outer normal vectorfield $\nu_*:\mathcal S_*\to\mathbb R^N$, satisfying 
\begin{enumerate}
	\item[$(i)$]$K\subseteq{\rm int}\, {\mathcal S_*}\subseteq\overline{{\rm int}\, {\mathcal S_*}}\subseteq{\rm int}\, {\mathcal S}\,.$
		\item[$(ii)$] For any $q\in\mathcal S_*$ there is some $p\in\mathcal S$ with $
		|q-p|<\ee$ and  $|\nu_*(q)-\nu(p)|<\ee\,.$
\end{enumerate}

\end{lemma}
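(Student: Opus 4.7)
The plan is to construct $\mathcal{S}_*$ in two stages: first displace $\mathcal{S}$ slightly along its inward normal to produce a $C^1$-embedded sphere lying in ${\rm int}\,\mathcal{S}$, then $C^1$-approximate that intermediate sphere by a $C^\infty$-embedded one. The tubular neighborhood theorem applied to the $C^1$-hypersurface $\mathcal{S}$ supplies $\delta>0$ and a $C^1$-diffeomorphism $\Phi:\mathcal{S}\times(-\delta,\delta)\to U\subseteq\mathbb{R}^N$ with $\Phi(p,t)=p+t\nu(p)$, and $\Phi(\mathcal{S}\times(-\delta,0))\subseteq{\rm int}\,\mathcal{S}$. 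Fix a $C^1$-diffeomorphism $\psi_0:\mathbb{S}^{N-1}\to\mathcal{S}$ and set $\tau_0:=\tfrac{1}{2}\min\{\delta,\dist(K,\mathcal{S})\}$. For any $\tau\in(0,\tau_0)$, the inward push $\psi_\tau(u):=\Phi(\psi_0(u),-\tau)=\psi_0(u)-\tau\,\nu(\psi_0(u))$ is a $C^1$-embedding whose image lies in ${\rm int}\,\mathcal{S}$ at distance exactly $\tau$ from $\mathcal{S}$, and $\|\psi_\tau-\psi_0\|_{C^1}=O(\tau)$ since $\nu\circ\psi_0$ is $C^1$.

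Next I would invoke Whitney's $C^1$-approximation theorem (equivalently: mollify $\psi_\tau$ componentwise in finitely many charts of $\mathbb{S}^{N-1}$ and glue via a partition of unity) to obtain, for any prescribed $\eta>0$, a $C^\infty$-map $\psi_*:\mathbb{S}^{N-1}\to\mathbb{R}^N$ with $\|\psi_*-\psi_\tau\|_{C^1}<\eta$. For $\eta$ sufficiently small, the differential $d\psi_*(u)$ is everywhere injective and $\psi_*$ is a smooth embedding, so $\mathcal{S}_*:=\psi_*(\mathbb{S}^{N-1})$ is a $C^\infty$-embedded sphere. The $C^0$-bound $|\psi_*(u)-\psi_\tau(u)|<\eta$ places $\mathcal{S}_*$ inside the tubular neighborhood, sandwiched between the levels $\Phi(\mathcal{S}\times\{-\tau\pm\eta\})$; when $\tau+\eta<\tau_0$ this yields $\overline{{\rm int}\,\mathcal{S}_*}\subseteq{\rm int}\,\mathcal{S}$ and $K\subseteq{\rm int}\,\mathcal{S}_*$, which proves (i). For (ii), given $q=\psi_*(u)\in\mathcal{S}_*$ I pair it with $p:=\psi_0(u)\in\mathcal{S}$: the triangle inequality gives $|q-p|\le\eta+O(\tau)$, while the tangent planes at $q$ and $p$ are the images of $T_u\mathbb{S}^{N-1}$ under $d\psi_*(u)$ and $d\psi_0(u)$, whose difference is again $O(\eta+\tau)$, so $|\nu_*(q)-\nu(p)|=O(\eta+\tau)$. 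Choosing $\tau$ and $\eta$ sufficiently small makes both bounds less than $\varepsilon$.

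The one delicate point is the orientation: the outward normal $\nu_*(q)$ extracted from the smoothed sphere must align with $\nu(p)=\nu(\psi_0(u))$, rather than with its opposite, for the estimate in (ii) to be correct. This is automatic from the $C^0$-smallness of the perturbation $\mathcal{S}_*-\psi_\tau(\mathbb{S}^{N-1})$: it forces the component of $\mathbb{R}^N\setminus\mathcal{S}_*$ containing $K$ to coincide with ${\rm int}\,\mathcal{S}_*$, so its outward normal agrees in direction with the outward normal of the inward-pushed sphere $\psi_\tau(\mathbb{S}^{N-1})$, i.e., with $\nu\circ\psi_0$ up to $O(\eta+\tau)$. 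Aside from this orientation bookkeeping, the whole argument is a quantitative tracking of $C^1$-sizes through two successive approximation steps, and no obstacle of a deeper nature arises.
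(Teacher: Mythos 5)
Your overall strategy is the same as the paper's (push $\mathcal S$ inward along a transversal direction to obtain an intermediate $C^1$-embedded sphere strictly inside ${\rm int}\,\mathcal S$, then $C^1$-approximate that by a $C^\infty$-embedding), but there is a genuine flaw in the first stage. You push along the unit normal $\nu$ itself and assert that $\nu\circ\psi_0$ is $C^1$, so that $\psi_\tau=\psi_0-\tau\,\nu\circ\psi_0$ is $C^1$ and $\|\psi_\tau-\psi_0\|_{C^1}=O(\tau)$. This is false for the hypersurface under consideration: $\mathcal S$ is only $C^1$-smooth, so $\nu$ is only $C^0$ (the Gauss map of a $C^k$ hypersurface is $C^{k-1}$). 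Consequently $\psi_\tau$ is merely continuous, the ``$C^1$-diffeomorphism'' $\Phi(p,t)=p+t\nu(p)$ is in fact only a homeomorphism onto a neighborhood, and the second stage — $C^1$-approximating $\psi_\tau$ by a smooth map $\psi_*$ and concluding that $d\psi_*$ is injective because it is $C^1$-close to $d\psi_\tau$ — does not make sense, since $d\psi_\tau$ does not exist.

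The paper's proof contains exactly the extra ingredient needed to fix this: before pushing inward, one regularizes $\nu$ to a $C^1$-smooth vector field $X:\mathcal S\to\mathbb R^N$ with $|X(p)|=1$ and $\langle X(p),\nu(p)\rangle>0$ (i.e., $X$ is transversal and points to the same side as $\nu$, but is one degree smoother). Then the push $\varphi(\theta,t)=\sigma(\theta)+tX(\sigma(\theta))$ is genuinely $C^1$ in $\theta$, so for small $t$ it is a $C^1$-embedding, and the subsequent $C^\infty$-approximation in the $C^1$ topology goes through. If you replace your $\nu$ by such a mollified transversal field $X$ (noting that conditions (i) and (ii) still hold, since $X$ may be taken uniformly close to $\nu$), your argument becomes essentially identical to the paper's and is correct; your careful bookkeeping of the orientation of $\nu_*$ is a nice explicit touch that the paper leaves to the reader.
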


\begin{proof} Using a regularization argument we can find a $C^1$-smooth vector field $X:\mathcal S\to\R^N$ with $|X(p)|=1$ and 
$
\langle X(p),\nu(p)\rangle>0$ for all $p\in\mathcal S$.
Choose some $C^1$-smooth diffeomorphism $\sigma:\mathbb S^{N-1}\to\mathcal S$ and consider the map $\varphi:\mathbb S^{N-1}\times\mathbb R\to\R^N$ defined by
$$
\varphi(\theta,t):=\sigma(\theta)+tX(\sigma(\theta))\,.
$$
If $t>0$ is small enough then $\mathcal S_*(t):=\varphi(\mathbb S^{N-1}\times\{t\})$ is a $C^1$-smooth embedded sphere satisfying  {\em (i)-(ii)}, and $\varphi(\cdot,t):\mathbb S^{N-1}\to\mathcal S_*(t)$ is a $C^1$-diffeomorphism. The result follows by setting $\mathcal S_*:=\phi(\mathbb S^{N-1})$ for some  $C^\infty$ map $\phi:\mathbb S^{N-1}\to\mathbb R^N$  sufficiently close, in the $C^1$ sense, to $\varphi(\cdot,t)$. 
\end{proof}

In~\cite{Mor}, Morse proved a differentiable version of the Schoenflies Theorem which we recall below for the reader's convenience. We denote by $\mathbb B^N$ and $\overline{\mathbb B^N}$ to the open unit ball  in $\mathbb R^N$ and its closure, respectively.  
\begin{theorem}[Morse, \cite{Mor}]\label{mor}{Given a $C^2$-smooth embedded sphere  $\widehat{\cal S}\subseteq\R^N$ there are open sets $\widehat{\mathscr U}\supset\overline{\mathbb B^N}$ and $\widehat{\mathscr V}\supset{\overline{{\rm int}\,{\mathcal S}}}$, a point $p\in\mathbb B^N$, and a homeomorphism $\widehat{\mathcal F}:\widehat{\mathscr U}\to\widehat{\mathscr V}$, such that $$\widehat{\mathcal F}(\mathbb S^{N-1})=\mathcal S,\qquad\widehat{\mathcal F}(\mathbb B^N)={\rm int}\,\mathcal S\,,\qquad\widehat{\mathcal F}:\widehat{\mathscr U}\backslash\{p\}\to\widehat{\mathscr V}\backslash\{\widehat{\mathcal F}(p)\} \text{ is a $C^2$ diffeomorphism.}$$}
	\end{theorem}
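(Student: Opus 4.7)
The plan is to reduce to the situation where a globally defined smooth ``squared-distance'' function is available on $\overline{{\rm int}\,\widehat{\mathcal S}}$, and then to transport the standard radial description of $\overline{\mathbb B^N}$ along its gradient flow. More precisely, I would seek a $C^2$ function $\rho$, defined on an open neighborhood of $\overline{{\rm int}\,\widehat{\mathcal S}}$, satisfying $\widehat{\mathcal S}=\rho^{-1}(1)$ with $\nabla\rho\ne 0$ on $\widehat{\mathcal S}$ and having a unique interior critical point $p\in{\rm int}\,\widehat{\mathcal S}$, which is a non-degenerate minimum with value $0$.

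First I would build $\rho$. The tubular neighborhood theorem gives a $C^2$ defining function for $\widehat{\mathcal S}$ near $\widehat{\mathcal S}$; picking any interior point $p$ and gluing this with $|y-p|^2$ through a partition of unity yields a first candidate, which however may have spurious critical points in the bulk. To remove them, I would perturb to a Morse function (possible by the $C^2$-genericity of Morse functions) and then invoke the standard critical-point cancellation argument of Morse theory on a compact manifold with boundary: since, by the topological Schoenflies theorem of Brown--Mazur, the enclosed region is a topological ball, its critical structure can be reduced by cancelling pairs of critical points of consecutive index, until only a single minimum survives.

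Given such a $\rho$, I would construct $\widehat{\mathcal F}$ in two pieces. Near $p$, the Morse lemma furnishes a $C^2$ chart in which $\rho$ takes the form $|z-p|^2$ in suitable coordinates, producing a $C^2$ diffeomorphism between a small Euclidean ball around $0$ and a neighborhood of $p$. Away from $p$, the normalized gradient flow $\dot z=\nabla\rho(z)/|\nabla\rho(z)|^2$ raises $\rho$ at unit rate and is $C^1$; reparametrizing its trajectories so that they match the radial description of $\overline{\mathbb B^N}\setminus\{0\}$ yields a $C^2$ diffeomorphism carrying $\mathbb S^{N-1}$ onto $\widehat{\mathcal S}$. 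Gluing the two pieces and declaring $\widehat{\mathcal F}(0):=p$ gives the required homeomorphism, continuous thanks to the uniform local Morse-chart model.

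The main obstacle is the first step: constructing a Morse function with exactly one interior critical point. This is the genuine content of Morse's theorem and relies on differential-topological tools (Smale-type cancellation) that go well beyond elementary calculus. Note however that the theorem permits a single non-smooth point, a relaxation strictly weaker than the full smooth Schoenflies conjecture --- still open in dimension four --- and it is precisely this flexibility that makes the cancellation approach succeed.
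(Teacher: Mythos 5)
The paper does not prove this statement: it is quoted verbatim from Morse's 1958 paper \cite{Mor} and used as a black box, so there is no in-paper proof to compare your sketch against. Nevertheless, your proposal contains a genuine gap, and in fact would, if it worked, prove more than the theorem asserts.

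The problem is in your first step, where you want to reduce $\rho$ to a Morse function on $\overline{{\rm int}\,\widehat{\mathcal S}}$ with a \emph{single} interior critical point (a nondegenerate minimum at $p$). If you could do this, then the Morse lemma near $p$ together with the gradient flow away from $p$ --- precisely the second and third steps of your sketch --- produce a $C^2$ diffeomorphism of $\overline{\mathbb B^N}$ onto $\overline{{\rm int}\,\widehat{\mathcal S}}$ with \emph{no} singular point: the Morse-lemma chart is a bona fide $C^2$ chart at $p$, so the glued map is smooth there too. That would be the full differentiable Schoenflies theorem, which is strictly stronger than Morse's statement and is still open in $\mathbb R^4$. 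You remark at the end that the allowed singular point is ``precisely this flexibility that makes the cancellation approach succeed,'' but your construction never exploits that flexibility; it produces a smooth map everywhere. This internal inconsistency is the symptom of the real obstruction: the Smale-type cancellation of critical points of consecutive index relies on the Whitney trick, which is unavailable in ambient dimensions $\le 4$, and it is exactly in those low dimensions (notably $N=4$) that one cannot in general drive a Morse function on a homotopy (indeed, topological) ball down to a single critical point. The topological Schoenflies theorem of Brown--Mazur only tells you the region is \emph{homeomorphic} to $\overline{\mathbb B^N}$, which is not enough to run smooth handle cancellation.

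Morse's own argument is of a different flavor: he pushes inward from the boundary along a normal field, obtaining a shrinking one-parameter family of embedded spheres, and then caps off this family at a single interior point; the cap is constructed by a limiting procedure that is only continuous, not differentiable, at the final point, and this is exactly where the permitted $C^0$-but-not-$C^2$ singularity arises. If you wanted to salvage a Morse-theoretic route, you would have to arrange for the one critical point to be \emph{degenerate} (or not a critical point of a smooth function at all), so that the local model near $p$ is only a homeomorphism; the combination ``nondegenerate minimum $+$ Morse lemma'' is incompatible with the necessity of a singular point in low dimensions.
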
  
We shall need a variant of this result in which the exterior of the standard sphere is mapped into the exterior of an embedded sphere. This is the content of Lemma \ref{diffeo} below.
\begin{lemma}\label{diffeo}
	{Given a $C^2$-smooth embedded sphere ${\cal S}\subseteq\R^N$, there are open sets $\mathscr U\supset\mathbb{R}^N\setminus\mathbb B^N$ and $\mathscr V\supset{\overline{{\rm ext}\,{\mathcal S}}}$, and a $C^2$-smooth diffeomorphism $\mathcal F:\mathscr U\to\mathscr V$ such that $\mathcal F(\mathbb S^{N-1})=\mathcal S$ and $\mathcal F(\R^N\setminus\overline{\mathbb B^N})={\rm ext}\,\mathcal S$. }
\begin{proof} There is no loss of generality in assuming that $0\in{\rm int}\,{\mathcal S}$. We denote by $\mathscr I:\mathbb R^N\backslash\{0\}\to\mathbb R^N\backslash\{0\},\ y\mapsto y/|y|^2$ to the inversion map and fix $\widehat{\mathscr U},\ \widehat{\mathscr V},\ p$ and $\widehat{\cal F}$ as given by Theorem \ref{mor} for the embedded sphere $\widehat{\mathcal  S}:=\mathscr I(\mathcal S)$. An elementary argument\footnote{We are indebted to R. Ortega for pointing this reference to us.}~\cite[\S(16.26.8)]{Die} shows that for any any open connected set $G\subseteq\mathbb R^N$ and any two points $p,q\in G$ there exists a $C^\infty$-smooth diffeomorphism of $\mathbb R^N$ sending $p$ into $q$ while leaving every point in $\mathbb R^N\backslash G$ fixed; applying this result to $G=\mathbb B^N$ and $G={\rm int}\,\widehat{\mathcal S}$ we see that we can set $p=0=\widehat{\mathcal F}(p)$.  To conclude the construction we define $\mathscr U:=\mathscr I(\widehat{\mathscr U}\backslash\{0\})$, $\mathscr V:=\mathscr I(\widehat{\mathscr V}\backslash\{0\})$, and $\mathcal F:=\mathscr I\circ\widehat{\mathcal F}\circ\mathscr I:\mathscr U\to\mathscr V$. The proof is complete.
\end{proof}

\end{lemma}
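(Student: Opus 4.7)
The plan is to reduce the statement to Morse's theorem (Theorem \ref{mor}) via inversion, which turns ``the closed exterior of $\mathcal S$'' into ``the (punctured) closed interior of another embedded sphere'' --- a setting in which Morse's theorem applies directly.

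After a translation I may assume $0\in{\rm int}\,\mathcal S$, and I consider the inversion $\mathscr I:\R^N\setminus\{0\}\to\R^N\setminus\{0\}$, $y\mapsto y/|y|^2$. This is a $C^\infty$ involutive diffeomorphism, fixing $\mathbb S^{N-1}$ pointwise, sending $\mathcal S$ to another $C^2$-smooth embedded sphere $\widehat{\mathcal S}:=\mathscr I(\mathcal S)$, and exchanging the punctured interior and exterior in the sense that $\mathscr I({\rm ext}\,\mathcal S)={\rm int}\,\widehat{\mathcal S}\setminus\{0\}$, with the analogous identity $\mathscr I(\R^N\setminus\overline{\mathbb B^N})=\mathbb B^N\setminus\{0\}$. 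Applying Theorem \ref{mor} to $\widehat{\mathcal S}$ then produces open sets $\widehat{\mathscr U}\supset\overline{\mathbb B^N}$, $\widehat{\mathscr V}\supset\overline{{\rm int}\,\widehat{\mathcal S}}$, an exceptional point $p\in\mathbb B^N$, and a homeomorphism $\widehat{\mathcal F}:\widehat{\mathscr U}\to\widehat{\mathscr V}$ that is a $C^2$ diffeomorphism away from $\{p\}$.

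For the composition $\mathscr I\circ\widehat{\mathcal F}\circ\mathscr I$ to be a well-defined $C^2$ diffeomorphism, I need both $p$ and $\widehat{\mathcal F}(p)$ to coincide with $0$, since $\mathscr I$ is undefined there. To arrange this I would invoke the classical differential-topological fact that in any connected open set $G\subseteq\R^N$, any two prescribed points can be interchanged by a $C^\infty$-smooth diffeomorphism of $\R^N$ supported in $G$. Applied with $G=\mathbb B^N$ (to move $p$ to $0$) and with $G={\rm int}\,\widehat{\mathcal S}$ (to move $\widehat{\mathcal F}(p)$ to $0$), this allows me to pre- and post-compose $\widehat{\mathcal F}$ so that in the adjusted data one has $p=0=\widehat{\mathcal F}(p)$, without disturbing the inclusions $\widehat{\mathscr U}\supset\overline{\mathbb B^N}$, $\widehat{\mathscr V}\supset\overline{{\rm int}\,\widehat{\mathcal S}}$, or the identity $\widehat{\mathcal F}(\mathbb S^{N-1})=\widehat{\mathcal S}$.

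With this normalization in place, I would set $\mathscr U:=\mathscr I(\widehat{\mathscr U}\setminus\{0\})$, $\mathscr V:=\mathscr I(\widehat{\mathscr V}\setminus\{0\})$, and $\mathcal F:=\mathscr I\circ\widehat{\mathcal F}\circ\mathscr I:\mathscr U\to\mathscr V$; the required properties $\mathcal F(\mathbb S^{N-1})=\mathcal S$ and $\mathcal F(\R^N\setminus\overline{\mathbb B^N})={\rm ext}\,\mathcal S$, as well as the inclusions $\mathscr U\supset\R^N\setminus\mathbb B^N$ and $\mathscr V\supset\overline{{\rm ext}\,\mathcal S}$, then follow from the inversion dictionary, while the $C^2$-diffeomorphism property holds throughout $\mathscr U$ because the singular origin has been excised from both sides. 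The step I expect to be most delicate is the point-moving reduction: one must confirm that the compactly supported diffeomorphisms inserted before and after $\widehat{\mathcal F}$ do not ``leak'' past the boundary of their respective open sets and therefore preserve all the features inherited from Morse's theorem.
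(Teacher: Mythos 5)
Your proposal is correct and follows essentially the same route as the paper: invert to reduce to Morse's theorem, normalize the exceptional point to the origin via the Dieudonn\'e point-moving lemma, and conjugate by $\mathscr I$. The final caveat about the supports of the point-moving diffeomorphisms is harmless --- since they are the identity outside $\mathbb B^N$ and outside ${\rm int}\,\widehat{\mathcal S}$ respectively, they fix $\mathbb S^{N-1}$ and $\widehat{\mathcal S}$ pointwise and preserve all needed inclusions.
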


\begin{definition}[Basket functions for embedded spheres]\label{bf} Let $\mathcal S\subseteq \mathbb R^N$ be an embedded sphere (or any compact hypersurface); by a {\em basket functions} for $\mathcal S$ we mean a $C^2$-smooth function $h:\R^N\to[0,1[$ satisfying:
\begin{enumerate}
	\item[(a)]  $h(y)=0$, {\rm for every} $y\in{{\rm int}}\,{\mathcal S}$\,;
	\item[(b)] $\nabla h(y)\ne0$, {\rm for every} $y\in{\rm ext}\,{\mathcal S}$\,;
	\item[(c)] $\displaystyle\lim_{\underset{y\in{\rm ext}\,{\mathcal S}} {y\to y_0}}\frac{\nabla h(y)}{|\nabla h(y)|}=\nu(y_0)$, {\rm for every} $y_0\in{\cal S}$\,;
	\item[(d)] $\displaystyle\lim_{|y|\to\infty} h(y)=1\,;\qquad\lim_{|y|\to\infty} \nabla h(y)=0\,;\qquad\lim_{|y|\to\infty} \hess\, h(y)=0\,.$
\end{enumerate}  
\end{definition}

\begin{lemma}\label{constr}
Every embedded sphere $\mathcal S\subseteq\R^N$ of class $C^2$ admits a basket function.
\end{lemma}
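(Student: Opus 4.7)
\emph{Proof plan.}
The plan is to reduce to the model case where $\mathcal S$ is the unit sphere $\mathbb S^{N-1}$ by pulling back through the $C^2$-diffeomorphism $\mathcal F$ furnished by Lemma~\ref{diffeo}. The main obstacle, as discussed below, will be controlling the behaviour at infinity, and it is resolved by taking the radial profile to decay as fast as we need.

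First I would fix a $C^2$-profile $\psi:[0,\infty)\to[0,1)$ with $\psi\equiv 0$ on $[0,1]$, $\psi$ strictly increasing on $[1,\infty)$, $\psi(r)\to 1$ as $r\to\infty$, and $\psi'(1)=\psi''(1)=0$. The order-two vanishing at $r=1$ guarantees that $h_0(y):=\psi(|y|)$ is $C^2$ on $\mathbb R^N$ (it is even locally constant at $0$) and is itself a basket function for $\mathbb S^{N-1}$. The specific decay rate of $\psi',\psi''$ at infinity will be tuned at the end.

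With $\mathcal F:\mathscr U\to\mathscr V$ and $\Phi:=\mathcal F^{-1}$ from Lemma~\ref{diffeo}, I would then set
\[
h(y):=
\begin{cases}
0, & y\in\overline{{\rm int}\,\mathcal S},\\
\psi(|\Phi(y)|), & y\in \mathscr V,
\end{cases}
\]
the two prescriptions agreeing on the overlap $\mathscr V\cap\overline{{\rm int}\,\mathcal S}$ since $\Phi$ preserves the inside/outside decomposition and hence sends this overlap into $\overline{\mathbb B^N}$, where $\psi(|\cdot|)=0$. Because $\psi$, $\psi'$, $\psi''$ all vanish at $r=1$, both $h$ and its first two derivatives match the zero function coming from inside, giving $h\in C^2(\mathbb R^N)$. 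Properties $(a)$--$(c)$ of Definition~\ref{bf} are then routine: $(a)$ holds by construction; $(b)$ follows from $\nabla h(y)=\psi'(|\Phi(y)|)\,D\Phi(y)^T\Phi(y)/|\Phi(y)|$, using that $\psi'>0$ on $(1,\infty)$ and $D\Phi(y)$ is invertible; and for $(c)$ a direct orthogonality computation shows that the limit direction of $\nabla h(y)$ as $y\to y_0\in\mathcal S$ equals $D\Phi(y_0)^T\Phi(y_0)$, which is a positive multiple of $\nu(y_0)$ since $(D\mathcal F(\theta_0)^T)^{-1}\theta_0$ is the outward unit normal of $\mathcal S$ at $\mathcal F(\theta_0)$.

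The delicate condition is $(d)$. Since the diffeomorphism in Lemma~\ref{diffeo} is built by conjugating the Morse homeomorphism $\widehat{\mathcal F}$ (which is only continuous at the singular point $p=0$) by the inversion $\mathscr I$, there is no a priori estimate on $\|D\Phi(y)\|$ or $\|D^2\Phi(y)\|$ as $|y|\to\infty$. Nevertheless $\mathcal F$ sends infinity to infinity, so $|\Phi(y)|\to\infty$, whence $h(y)\to 1$. To push the gradient and Hessian to zero as well, I would go back to the construction of $\psi$ and set
\[
G(r):=1+\sup\bigl\{\|D\Phi(y)\|+\|D^2\Phi(y)\|\colon y\in\overline{{\rm ext}\,\mathcal S},\ |\Phi(y)|\le r\bigr\},
\]
which is finite for every $r\ge 1$ (the supremum is taken over a compact subset of $\mathscr V$) and non-decreasing. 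I would then refine $\psi$ so that $\psi'(r),\psi''(r)\le G(r)^{-3}$ for $r$ large, while keeping $\psi'(1)=\psi''(1)=0$ and $\int_1^\infty\psi'=1$; this is always possible. Estimates of the form $|\nabla h(y)|\le \|D\Phi(y)\|\psi'(|\Phi(y)|)\le G(|\Phi(y)|)^{-2}$, together with the analogous Hessian bound (which involves $\|D\Phi\|^2\psi''+\|D^2\Phi\|\psi'$), then give $(d)$ and close the proof.
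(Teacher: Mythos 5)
Your construction takes essentially the same route as the paper's: pull back through the diffeomorphism $\mathcal F$ of Lemma~\ref{diffeo}, compose the radial coordinate $|\mathcal F^{-1}(y)|$ with a scalar profile, and tune the decay of that profile against the (a priori uncontrolled) growth of $D\mathcal F^{-1}$ and $D^2\mathcal F^{-1}$ at infinity. The paper's $f(y)=|\mathcal F^{-1}(y)|^2-1$ and the outer function $g$ play the roles of your $|\Phi(y)|$ and $\psi$, and the paper's auxiliary function $m(r)$ plays the role of your $G(r)$. So the strategy, the reduction, and the obstruction you identify are all the right ones.

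There is, however, one quantitative slip in the verification of item $(d)$. You impose $\psi'(r),\psi''(r)\le G(r)^{-3}$ and conclude $|\nabla h(y)|\le \|D\Phi(y)\|\,\psi'(|\Phi(y)|)\le G(|\Phi(y)|)^{-2}$, and similarly $\|\hess\,h(y)\|\le \|D\Phi(y)\|^2\psi''+\|D^2\Phi(y)\|\psi'\le G(|\Phi(y)|)^{-1}+G(|\Phi(y)|)^{-2}$. These upper bounds tend to $0$ only if $G(r)\to\infty$ as $r\to\infty$, which your definition does not force: if $\|D\Phi\|$ and $\|D^2\Phi\|$ happen to be bounded, then $G$ is bounded and your estimate yields a fixed positive constant rather than $0$. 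This is exactly the point addressed in the paper by the extra factor $(r+1)$ inside $m(r)$, which makes the corresponding bounds $\le 1/(1+f(y))$ regardless of whether the derivative norms grow. The fix in your setting is just as cheap: require, for instance, $\psi'(r)\le \big(r\,G(r)\big)^{-1}$ and $\psi''(r)\le \big(r\,G(r)^2\big)^{-1}$ for $r$ large (this is compatible with $\psi'>0$, $\int_1^\infty\psi'=1$, $\psi'(1)=\psi''(1)=0$). Then $|\nabla h(y)|\le 1/|\Phi(y)|$ and $\|\hess\,h(y)\|\le 2/|\Phi(y)|$, and both tend to $0$ since $|\Phi(y)|\to\infty$. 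Alternatively, keep your $G^{-3}$ bound but additionally impose $\psi'(r),\psi''(r)\to 0$ and split into the two cases $G$ bounded and $G$ unbounded. Apart from this adjustment the argument is complete; one small stylistic remark is that $(D\mathcal F(\theta_0)^T)^{-1}\theta_0$ is a positive \emph{multiple} of the outward unit normal at $\mathcal F(\theta_0)$ rather than the unit normal itself, but this of course does not affect the limit of the normalized gradient.
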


\begin{proof}Choose the open neighborhoods of infinity $\mathscr U,\mathscr V\subseteq\R^N$ and the $C^2$-smooth diffeomorphism $\mathcal F:\mathscr U\to\mathscr V$ as given by Lemma~\ref{diffeo}. We consider the $C^2$ function $f:\mathscr V\to\R$ defined by
	$$
	f(y):=|{\cal F}^{-1}(y)|^2-1\,,\qquad y\in\mathscr V\,.
	$$
	Notice that $f^{-1}(\{0\})=\mathcal S$ and $f^{-1}(]0,+\infty[)={\rm ext}\,\mathcal S$. Moreover,
	\begin{equation*}
	\nabla f(y)\not=0\ \forall y\in\overline{{\rm ext}\,\mathcal S}\,,\qquad \frac{\nabla f(y)}{|\nabla f(y)|}=\nu(y)\ \forall y\in\mathcal S\,,\qquad\lim_{|y|\to\infty}f(y)=+\infty\,.
	\end{equation*}
Observe also that the level sets $\mathcal S_r:=f^{-1}(r)$ are embedded spheres for all $r\geq 0$. We define 
	$$m:[0,+\infty[\,\to]0,+\infty[\,,\qquad
	m(r):=(r+1)\max\{|\nabla f(y)|+\|\hess\, f(y)\|:y\in\mathcal S_r\}\,.
	$$
Since $m$ is continuous and positive, it is possible to find a $C^2$-smooth function $g:\R\to\R$ with $g(r)=0\text{ for }r\leq 0$, $g'(r)>0\text{ for }r>0$, $\lim_{r\to+\infty}g(r)=1$ and 
	 \begin{equation}\label{eu3}
 -\frac{1}{m(r)^2}<g''(r)<0<g'(r)<\frac{1}{m(r)}\,,\qquad\text{if }r\geq 1\,.
	 \end{equation}
	Define $h:\R^N\to\R$ by
	$$
	h(y):=
	\begin{cases}
	\vspace{2mm}
	0\,, & \hbox{ if }y\in\overline{{\rm int}\,{\mathcal S}}\\
	g(f(y))\,, & \hbox { if }y\in{\rm ext}\,{\mathcal S}
	\end{cases}\,.
	$$
	Since $g(f(y))=0$ on $(\overline{{\rm int}\,{\mathcal S}})\cap\mathscr V$, the function $h$ is well defined and $C^2$-smooth. Properties {\em (a),(b),(c)}, as well as the first statement in {\em (d)} are now immediate. Concerning the second part of  {\em (d)} we combine the first two inequalities of (\ref{eu3}) and the definition of $m$ to get the estimates 
	$$y\in{\rm ext}\,\mathcal S_1\Rightarrow|\nabla h(y)|<\frac{1}{m(f(y))}|\nabla f(y)|\leq\frac{1}{1+f(y)}\to 0\text{ as }|y|\to\infty\,.$$
	
	Finally in order to check the last assertion of  {\em (d)} we observe that, by the triangle inequality,
	$$y\in{\rm ext}\,{\mathcal S_1}\Rightarrow\|{\rm Hess}\, h(y)\|\leq|g''(f(y))|\,|\nabla f(y)|^2+|g'(f(y))|\,\|{\rm Hess}\,f(y)\|\,,$$
which, in combination with (\ref{eu3}) and the definition of $m$, gives
	$$y\in{\rm ext}\,{\mathcal S_1}\Rightarrow\|{\rm Hess}\, h(y)\|<\frac{|\nabla f(y)|^2}{m(f(y))^2}+\frac{\|{\rm Hess}\, f(y)\|}{m(f(y))}\leq\frac{1}{(1+f(y))^2}+\frac{1}{1+f(y)}\to 0\text{ as }|y|\to\infty\,.$$

The proof is complete.
\end{proof}

\end{section}

\begin{section}{Modifying the Hamiltonian}\label{sec4}

This section, which goes along the lines of Sections 4 and 5 of \cite{FonUre1}, is devoted to prove Theorem \ref{thmmain1}. An important ingredient of the proof is Theorem \ref{szumod} below. This result is reminiscent of some theorems by Szulkin (cf.~\cite[Theorem 4.2]{Szu1} and~\cite[Theorem 8.1]{Szu2}) because, as him, we study existence and multiplicity of periodic solutions for some Hamiltonian systems $H=H(t,x,y)$ which are periodic in the $x_i$ variables. On the other hand, Szulkin studied the case of $H$ being quadratic in the $y_i$ directions while we are interested in $H$ having a finite limit as $|y|\to\infty$. 
\medbreak

We assumed in the Introduction that $H=H(t,x,y)$ is periodic in the time variable $t$. It will be convenient to relax this assumption now and consider instead Hamiltonians  which are defined only for $t\in[0,T]$ and are not necessarily the restriction of a continuous, time-periodic function. 
\begin{definition}[Admissible Hamiltonians]\label{dad} The Hamiltonian function $H:[0,T]\times\mathbb R^{2N}\to\mathbb R$ will be called {\em admissible} if it is continuous and $2\pi$-periodic in the $x_i$ variables, and its first-order partial derivatives with respect to $z=(x,y)$ are continuously defined on $[0,T]\times\mathbb R^{2N}$. 
	\end{definition}
	By some abuse of notation, we shall also say that the solution $z:[0,T]\to\mathbb R^{2N}$ of $(HS)$ is $T$-periodic provided that $z(0)=z(T)$.

\begin{theorem}\label{szumod}
	Let the Hamiltonian function $H:[0,T]\times\R^{2N}\to\R$ be admissible. Assume that
	
	\begin{enumerate}
		\item[{\bf [}${\bm H_1}${\bf ]}] 
		there exists  $R_0>0$ such that $H(t,x,y)\equiv h(y)$ does not depend on $t,x$ if $|y|\ge R_0$,
		\item[{\bf [}${\bm H_2}${\bf ]}] $h$ has a finite limit $\ell$ as $|y|\to\infty$; furthermore, $h(y)\not=\ell$ for $|y|\geq R_0$,
		\item[{\bf [}${\bm H_3}${\bf ]}] $\lim_{|y|\to\infty}\nabla h(y)=0$.
	\end{enumerate}
	Then, system~$(HS)$ has at least $N+1$ geometrically distinct $T$-periodic solutions. If, in addition: 
	\begin{enumerate}
		\item[{\bf [}${\bm H_4}${\bf ]}] $H$ is $C^2$-smooth with respect to $(x,y)$,
		\item[{\bf [}${\bm H_5}${\bf ]}] $\lim_{|y|\to\infty}\hess\, h(y)=0$, 
		\item[{\bf [}${\bm H_6}${\bf ]}] the $T$-periodic solutions of $(HS)$ are nondegenerate, 
	\end{enumerate}
	then $(HS)$ has at least~$2^N$ geometrically different $T$-periodic solutions.  
\end{theorem}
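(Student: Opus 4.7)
The plan is to deduce Theorem~\ref{szumod} from the abstract critical point theorem~\ref{absth1} applied to the Hamiltonian action
$$
\Phi(z)=\int_0^T\Big[\tfrac12\langle J\dot z(t),z(t)\rangle-H(t,z(t))\Big]dt
$$
on a suitable Hilbert space of $T$-periodic loops. Since $H$ is $2\pi$-periodic in each $x_i$, the functional descends modulo the $\Z^N$-action $z(\cdot)\mapsto z(\cdot)+(2\pi k,0)$, and critical orbits on the quotient correspond to geometrically distinct $T$-periodic solutions of $(HS)$.

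First I would set up the variational framework on $E=H^{1/2}(\R/T\Z;\R^{2N})$ with its spectral decomposition $E=E^+\oplus E^0\oplus E^-$ relative to the unbounded selfadjoint operator $-J\partial_t$; the two infinite-dimensional subspaces $E^\pm$ make $\Phi$ strongly indefinite. Under $[\bm H_1]$--$[\bm H_3]$ the gradient $\nabla H$ is globally bounded and $H$ itself is bounded, so the nonlinear term $z\mapsto\int_0^T H(t,z)\,dt$ is a compact, $\Z^N$-equivariant $C^1$ perturbation of the quadratic form. To invoke Theorem~\ref{absth1} the crucial technical step is a Palais\,--\,Smale-type condition modulo $\Z^N$-translations, together with an a~priori bound on $T$-periodic solutions. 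For the latter, note that if a $T$-periodic solution $z=(x,y)$ satisfied $|y(t)|\ge R_0$ for all $t$ then, by $[\bm H_1]$, it would solve $\dot y=0$ and $\dot x=\nabla h(y(0))$, whence $T$-periodicity modulo $2\pi\Z^N$ would force $(T/2\pi)\nabla h(y(0))\in\Z^N$; combining with $[\bm H_3]$ this yields $\nabla h(y(0))=0$ for $|y(0)|$ sufficiently large, in tension with the assumption $h(y)\neq\ell$ outside $B_{R_0}$ of $[\bm H_2]$ (possibly after a preliminary regularization of $h$ to rule out critical points at infinity).

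Once Theorem~\ref{absth1} applies, it produces at least $\cat(\mathbb T^N)=N+1$ critical orbits of $\Phi$, proving the first assertion. For the refined count under $[\bm H_4]$--$[\bm H_6]$, I would use the Morse-theoretic part of Theorem~\ref{absth1}: the $C^2$-regularity from $[\bm H_4]$ legitimizes the Hessian analysis, the uniform decay of $\hess\,h$ at infinity from $[\bm H_5]$ keeps the second-order asymptotics under control, and nondegeneracy $[\bm H_6]$ allows Morse inequalities. These, combined with the total Betti number $\sum_{k=0}^N\binom{N}{k}=2^N$ of the $N$-torus, yield $2^N$ critical orbits. The principal obstacle, which I expect is absorbed into the statement and proof of Theorem~\ref{absth1} rather than this reduction, is that $H$ only has a \emph{finite} limit at infinity in $y$ rather than the quadratic growth used in Szulkin's classical setting; thus coercivity must be replaced by the asymptotic flatness of $h$ as the mechanism driving both compactness and the linking geometry, and the nontrivial technical work consists in making this substitution rigorous within the abstract framework.
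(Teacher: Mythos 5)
Your high-level route is the paper's: set up the Hamiltonian action on an $H^{1/2}$-type loop space, present it in the form $\Phi_\psi(e,v)=\tfrac12\langle Le,e\rangle+\psi(e,v)$ on $\MM=E\times\mathcal V$ with $\mathcal V=(\R/2\pi\Z)^N$, and apply Theorem~\ref{absth1} using $\cl(\mathcal V)=N$ and ${\rm sb}(\mathcal V)=2^N$. However, your middle paragraph misidentifies the mechanism and also contains a logical gap. You argue that a $T$-periodic orbit staying outside $\overline{B}_{R_0}$ forces $\nabla h(y(0))=0$ (true, for $|y(0)|$ large, by {\bf [}${\bm H_3}${\bf ]}), and you conclude this is ``in tension with'' $h(y(0))\neq\ell$ from {\bf [}${\bm H_2}${\bf ]}. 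But $\nabla h(y_0)=0$ and $h(y_0)\neq\ell$ are perfectly compatible: {\bf [}${\bm H_2}${\bf ]} rules out the value $\ell$, not critical points of $h$ at infinity. So no contradiction is reached, no a priori bound follows, and the suggested ``preliminary regularization'' does not obviously fix this. More to the point, the paper does not rely on a priori bounds on solutions at all: the Palais--Smale condition for $\Phi_\psi$ may genuinely fail at the level $c=\ell$, and the whole point of Theorem~\ref{absth1} (built on Lemma~\ref{lemmaPS}, which gives (PS)$_c$ only for $c\neq\ell$) is to isolate the degenerate kernel direction $E_0=\{0\}\times\R^N$ so that the linking and Morse-theoretic arguments avoid the problematic level.

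You also omit a step that the paper treats as essential. To verify $\psi\in\mathscr A$ one needs $\psi(e,v)\neq\ell$ on all of $\MM$ (condition {\bf [}${\bm \psi_1}${\bf ]}), but {\bf [}${\bm H_2}${\bf ]} only says $h(y)\neq\ell$ for $|y|\geq R_0$; nothing prevents $H(t,x,y)=\ell$ for $|y|<R_0$. The paper fixes this by first passing to $\breve H(t,x,y)=\alpha(h(y))$ for $|y|\geq R_0$ and leaving $H$ unchanged on $|y|\leq R_0$, where $\alpha$ is a monotone $C^2$ function with $\alpha(\ell)>\sup H$; this preserves {\bf [}${\bm H_{1-6}}${\bf ]} and the set of $T$-periodic solutions but upgrades {\bf [}${\bm H_2}${\bf ]} to {\bf [}${\bm H_2^*}${\bf ]}: $H\neq\ell$ everywhere. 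Without this preliminary modification, the verification that $\psi\in\mathscr A$ (and, a fortiori, $\mathscr A^+$) does not go through. Finally, you should make the splitting explicit: the compact factor $\mathcal V=(\R/2\pi\Z)^N$ records the mean of $x$ modulo $2\pi$, while the finite-dimensional kernel $E_0=\{0\}\times\R^N\subseteq E=\widetilde H^{1/2}\times H^{1/2}$ records the constant part of $y$; it is the latter, not the torus, in which $\psi$ becomes asymptotically constant and compactness is lost, and this distinction is what the abstract hypotheses are designed around.
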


In this result, assumption {\bf [}${\bm H_1}${\bf ]} looks too strong for applications, and it seems plausible that it could be avoided if one replaces the limits in {\bf [}${\bm H_{2,3,5}}${\bf ]} by their analogues referred to $H(t,x,y)$, assumed uniform with respect to $(t,x)$. However, this is what we shall need for the purposes of this paper.

\medbreak

 The proof of Theorem \ref{szumod} is postponed to next section. The remaining of this one is devoted to show how it implies Theorem \ref{thmmain1}; with this aim, we need a reinforced version of Definition \ref{dad}

\begin{definition}[Strongly admissible Hamiltonians]\label{sah}
	 The Hamiltonian function $H:[0,T]\times\R^{2N}\to\R$ is said to be strongly admissible (with respect to the embedded sphere ${\mathcal S}\subseteq\mathbb R^N$) provided that it is admissible and the following two conditions hold: 
	
	\begin{enumerate}
		\item[{\rm\bf[1.]}] there exists a relatively open set ${\cal V}\subseteq[0,T]\times\R^N$, containing $\{0\}\times\overline{{\rm ext}\,{\cal S}}$, such that $H$ is $C^\infty$-smooth with respect to $z=(x,y)$ on the set ${\cal V}_\sharp:=\{(t,x,y): (t,y)\in{\cal V}, \,x\in\R^N\}$; 
		\item[{\rm\bf [2.]}] there exists some $R>\max_{y\in\mathcal S}|y|$ such that
		$H(t,x,y)=0$, if $|y|\ge R$.
	\end{enumerate}
\end{definition}

We shall start by observing that it suffices to prove Theorem~\ref{thmmain1} under the additional assumptions that the embedded sphere $\mathcal S$ is $C^2$-smooth and the Hamiltonian $H$ is strongly admissible. With this aim, let the admissible Hamiltonian $H:[0,T]\times\mathbb R^{2N}\to\mathbb R$ and the $C^1$-smooth embedded sphere $\mathcal S\subseteq\R^N$ be given, and assume that the flow of $H$ satisfies the avoiding outward rays condition (\ref{arc}) (resp., the avoiding inward rays condition) relative to $\mathcal S$. We denote by $(\widehat{HS})$ the Hamiltonian system associated with the modified Hamiltonian $\widehat H$.

\begin{lemma}\label{lem272}
	{Under the above, there exists a strongly admissible Hamiltonian $\widehat H$ and a $C^2$-smooth embedded sphere $\widehat{\cal S}\subseteq{\rm int }\,\mathcal S$, such that:
		\begin{enumerate}
			\item[{$(\varhexstar)$}]  $H$ and $\widehat H$ coincide on some relatively open set containing the graph of every $T$-periodic solution $(\widehat x,\widehat y)$ of  $(\widehat{HS})$ starting with $\widehat y(0)\in{\rm int}\,\widehat{\cal S}$;
			\item[{$(\varhexstar\varhexstar)$}] the flow of $\widehat H$ satisfies the avoiding outward rays condition  (resp., the avoiding inward rays condition) relative to $\widehat{\cal S}$.
	\end{enumerate}}
\end{lemma}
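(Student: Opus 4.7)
The proof splits into three steps: regularize $\mathcal S$, localize the $T$-periodic solutions away from the new sphere, and then modify $H$ far from the region they inhabit.

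\textbf{Step 1 (Regularization).} By Lemma~\ref{intsph}, choose a $C^2$-smooth embedded sphere $\widehat{\mathcal S}\subseteq{\rm int}\,\mathcal S$ whose outward normal field is uniformly close to that of $\mathcal S$. A standard compactness argument---using the $2\pi$-periodicity in $x$ to reduce to compact sets of initial data, plus Arzel\`a--Ascoli on a hypothetical counterexample sequence---transfers the avoiding outward (resp.~inward) rays condition from $\mathcal S$ to $\widehat{\mathcal S}$.

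\textbf{Step 2 (A priori bound and separation).} Continuous dependence on initial data yields a radius $R_1>\max_{\mathcal S}|y|$ with $|y(t)|\leq R_1$ for $t\in[0,T]$ and every solution of $(HS)$ with $y(0)\in\overline{{\rm int}\,\mathcal S}$. The crucial further observation is that $T$-periodicity forces $x(T)-x(0)=0$, and $0\in\mathscr R_+(y(0))\cap\mathscr R_-(y(0))$; hence the avoiding rays condition rules out $T$-periodic orbits of $(HS)$ whose $y$-component starts on $\widehat{\mathcal S}$. Together with continuous dependence, this forces the initial data $y(0)$ of all $T$-periodic orbits of $(HS)$ with $y(0)\in\overline{{\rm int}\,\widehat{\mathcal S}}$ to lie in a compact set $K\subseteq{\rm int}\,\widehat{\mathcal S}$ separated from $\widehat{\mathcal S}$ by some $\varepsilon>0$.

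\textbf{Step 3 (Modify $H$).} Multiply $H$ by a smooth radial cutoff $\chi$ equal to $1$ on a large ball and $0$ off a larger one, to secure condition~[2] of Definition~\ref{sah} while leaving the relevant trajectories undisturbed. Take $\mathcal U$ to be the open $(\varepsilon/2)$-neighborhood of $\overline{{\rm ext}\,\widehat{\mathcal S}}$ (so $\mathcal U\cap K=\emptyset$) and $\delta>0$ smaller than $\varepsilon/(2\|\nabla(\chi H)\|_\infty)$: uniformly in any bounded modification, a trajectory issuing from $K$ cannot reach $\mathcal U$ within time $\delta$. Via a smooth partition of unity in $(t,y)$, replace $\chi\cdot H$ on $\mathcal V_\sharp:=[0,\delta)\times\R^N\times\mathcal U$ by its $z$-mollification at a very small scale, blending smoothly back to $\chi\cdot H$ off $\mathcal V_\sharp$. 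Setting $\mathcal V:=[0,\delta)\times\mathcal U$, the resulting $\widehat H$ is strongly admissible by construction; moreover, choosing the mollification radius small enough, a perturbation argument shows that every $T$-periodic orbit of $(\widehat{HS})$ with $\widehat y(0)\in\overline{{\rm int}\,\widehat{\mathcal S}}$ is $C^1$-close to a $T$-periodic orbit of $(HS)$ with initial data in $K$. Hence its graph lies inside the region where $\widehat H=H$, yielding $(\varhexstar)$, while $(\varhexstar\varhexstar)$ follows from the $C^1$-smallness of the perturbation combined with Step~1 and the uniform gap in the avoiding rays condition (obtained by compactness of $\widehat{\mathcal S}$).

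\textbf{Main obstacle.} The delicate point is the tension between the $C^\infty$-smoothness of $\widehat H$ required on a neighborhood of $\overline{{\rm ext}\,\widehat{\mathcal S}}$---a set necessarily containing $\widehat{\mathcal S}$ itself---and the coincidence $\widehat H=H$ near the periodic orbits, when $H$ is only $C^1$. Step~2 is precisely the resolution: applying the avoiding rays condition to the $T$-periodic structure produces the uniform separation $\varepsilon$ that lets the smoothing region be confined to a thin space-time tube disjoint from all periodic orbits.
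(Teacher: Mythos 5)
Your proposal follows the route the paper itself points to for this (omitted) proof---Lemma~\ref{intsph} for the inner sphere $\widehat{\mathcal S}$, Arzel\`a--Ascoli for transferring the avoiding rays condition and for separating the periodic initial data from $\widehat{\mathcal S}$, and a mollification of $H$ near $\widehat{\mathcal S}$ for small times to secure strong admissibility---so the approach matches. One technical slip deserves attention in Step~3: if $\widehat H=\theta\,(\chi H)_{\mathrm{moll}}+(1-\theta)\,\chi H$ for a partition of unity $\theta=\theta(t,y)$, then on the transition region $\{0<\theta<1\}$ this is a convex combination of a $C^\infty$-in-$z$ function and a merely $C^1$-in-$z$ one, hence only $C^1$ in $z$ there; so you cannot take $\mathcal V:=[0,\delta)\times\mathcal U$ in Definition~\ref{sah}, but must instead take $\mathcal V$ inside the set $\{\theta\equiv 1\}$, arranging that set to still contain a neighborhood of $\{0\}\times\overline{{\rm ext}\,\widehat{\mathcal S}}$ while the whole strip $\{\theta>0\}$ is avoided by the orbits coming from $K$ on $[0,\delta]$. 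Two further minor points: the a priori $y$-bound in Step~2 cannot be obtained from ``continuous dependence'' (uniqueness is not assumed) but follows from the same Arzel\`a--Ascoli/diagonal argument; and $\delta$ should be calibrated against a slightly fattened neighborhood of $K$, since periodic orbits of $(\widehat{HS})$ have initial $y$-data near, not in, $K$. These are repairable refinements of bookkeeping, not changes of method.
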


The proof of Lemma \ref{lem272} follows from the combination of Lemma \ref{intsph}, a regularization argument and the Ascoli-Arzel\`a Theorem (see \cite[Section 4]{FonUre1}), and will be omitted. What remains of the proof of Theorem \ref{thmmain1} is also very similar to \cite[Section 5]{FonUre1} with only some small changes (for instance, the set $D$ appearing there has to be replaced by ${\rm int}\,\mathcal S$). We sketch it below, for the reader's convenience.

\begin{proof}[Proof of Theorem \ref{thmmain1}] We assume, to fix ideas, that the flow of $(HS)$ satisfies the avoiding {\em outward} rays condition (\ref{arc}) with respect to $\mathcal S$. 
	In view of Lemma \ref{lem272} there is no loss of generality in further assuming that the embedded sphere $\mathcal S$ is $C^2$-smooth and the Hamiltonian is strongly admissible; thus, we let $\mathcal V,\mathcal V_\sharp$ and $R>0$ be as given by Definition \ref{sah}.   Let the basket function $h:\mathbb R^N\to[0,1[$ be given by Lemma \ref{constr} and choose positive numbers $0<\tau<T$ and $\varrho>0$  small enough so that 
	\begin{equation}
	\label{eu0}	[0,\tau]\times \{\eta\in\overline{\mathbb B}_R:\dist(\eta,{\rm ext}\,\mathcal S)\leq\varrho\}\subseteq\mathcal V\,.
	\end{equation}
By combining the boundedness of $\nabla H$, the avoiding rays condition (\ref{arc}) and item {\em (c)} in Definition \ref{bf} we see that if $\tau$ and $\varrho$ are chosen small enough, then  the following implications hold  for solutions $z(t)=(x(t),y(t))$ of~$(H\!S)$:
	\begin{align}
	&t\in[0,\tau]&\Rightarrow & &|y(t)-y(0)|<\varrho\,,\label{eu4}\\
&\dist(y(0),\mathcal S)\le\varrho&\Rightarrow & &x(T)-x(0)\not\in\{\lambda\nabla h(y(0)):\lambda\geq 0\}\,.\label{eu5}
\end{align}
(We denote by $\overline{\mathbb B}_R$ to the closed ball of radius $R$ in $\mathbb R^N$).	We consider the sets 
	$$
 \Delta:=\Big([0,\tau]\times \overline{{\rm ext}\,\mathcal S} \Big)\cup\Big([0,T]\times(\mathbb R^N\backslash\overline{\mathbb B}_R)\Big)\,,\qquad \Delta_\sharp:=\{(t,\xi,\eta)\in[0,T]\times\mathbb R^{2N}:(t,\eta)\in\Delta\}\,.
	$$  

Following the construction in \cite[Lemma 5.2]{FonUre1} one can find a $C^2$-smooth function $r:[0,T]\times\R^{N}\to\mathbb R$ and some $R_0>R$ satisfying
\begin{enumerate}
	\item[$(\star)$] $r(t,\eta)=0$, if $(t,\eta)\notin\Delta$,
	\item[$(\star\star)$] $\frac{1}{T}\int_0^Tr(t,\eta)\,dt=h(\eta)$, for every $\eta\in\R^N$,
	\item[$(\star$$\star\star)$] $r(t,\eta)=h(\eta)$, if $|\eta|\geq R_0$.
\end{enumerate}

Let $(\mathcal X(t;\xi,\eta),\mathcal Y(t;\xi,\eta))$ be the value at time $t$ of the solution of  $(HS)$ starting from $(\xi,\eta)\in\mathbb R^{2N}$ at time $t=0$. In view of (\ref{eu0})-(\ref{eu4}) and requirements {\rm\bf [1.]-[2.]} in Definition \ref{sah}, the map $\mathfrak Z(t;\xi,\eta):=(t;\mathcal X(t;\xi,\eta),\mathcal Y(t;\xi,\eta))$ defines a $C^\infty$ diffeomorphism between $\Delta_\sharp$ and its image, which is contained into $\mathcal V_\sharp\cup\big([0,T]\times\mathbb R^N\times(\mathbb R^N\backslash\overline{\mathbb B}_R)\big)$. This allows us to consider  the functions $r_\sharp,\mathcal R:[0,T]\times\R^{2N}\to\R$ defined by
$$
r_\sharp(t,\xi,\eta)=r(t,\eta)\,,\qquad {\cal R}(t,z)=
\begin{cases}
r_\sharp(\mathfrak Z^{-1}(t,z))\,, &\hbox{ if }(t,z)\in\mathfrak Z({\Delta_\sharp})\,,\\
0\,, & \hbox{ otherwise}\,.
\end{cases}
$$ 

Both of them are $C^2$-smooth. Let the modified Hamiltonian $\widetilde H:[0,T]\times\mathbb R^{2N}\to\mathbb R$ be defined by
$\widetilde H(t,z):=H(t,z)-\lambda{\cal R}(t,z)\,,$
where $\lambda>0$ is some constant. We observe that, for $|y|>R_0$ ,
$$
\widetilde H(t,x,y)=-\lambda{\cal R}(t,x,y)=-\lambda r(t,y)=-\lambda h(y)\,,
$$
and one easily checks that for $\lambda>0$ large enough, $\widetilde H$ lies under the framework of Theorem~\ref{szumod}. Applying this result we obtain at least $N+1$ geometrically distinct $T$-periodic solutions of the associated Hamiltonian system~$(\widetilde{H\!S})$, or $2^N$ geometrically distinct $T$-periodic solutions if the Hamiltonian is twice continuously differentiable with respect to $(x,y)$ and the solutions are nondegenerate. 

\medbreak

To end the argument one can use (\ref{eu5}) to show that, if $\lambda>0$ is large enough system $(\widetilde{H\!S})$ does not have $T$-periodic solutions $z=(x,y)$ starting with $y(0)\in\overline{{\rm ext}\,\mathcal S}$. On the other hand, the Hamiltonian systems $(HS)$ and $(\widetilde{H\!S})$ have the same solutions starting with $y(0)\in{\rm int}\,\mathcal S$, see \cite[Section 5]{FonUre1} for the details. The proof is complete.
\end{proof}

\end{section}

\begin{section}{Critical point theory}\label{sec5}
 Let ${\cal V}$ be a compact manifold without boundary of class $C^2$,  and let $f:\mathcal V\to\R$ be a function of class $C^1$. The maximum and the minimum of $f$ on $\mathcal V$ are critical values of $f$; in particular, $f$ has at least two different critical points.  However, if one assumes some additional complexity in the topology of $\mathcal V$, in some cases it is possible to combine algebraic topology with critical point theory methods to predict the existence of more critical points. For instance, it is well known that if $\mathcal V=(\mathbb R/2\pi\mathbb Z)^N$ is the $N$-torus then every $C^1$ function $f:\mathcal V\to\mathbb R$ has at least $N+1$ different critical points, and $2^N$ if $f$ is $C^2$ and the critical points are nondegenerate.
 
 \medbreak
 
  More generally, to any compact manifold $\mathcal V$ one can associate the integers
  ${\rm cl}(\mathcal V)$ and $ {\rm sb}(\mathcal V)$,
  called, respectively, the {\em cuplength} and the {\em sum of the Betti numbers} of $\mathcal V$. The first one, ${\rm cl}(\mathcal V)$,  is the largest integer $k$ for which there are elements $\alpha_j\in H^{q_j}(\mathcal V),\ j = 1,\ldots, k$ (the singular cohomology vector spaces with real coefficients),  such that $q_j\ge 1$ and the cup product $\alpha_1$\,{\tiny{$\cup$}}\,\dots\,{\tiny{$\cup$}}\,$\alpha_k$ does not vanish.  The second one, ${\rm sb}(\mathcal V)$, is the sum of the dimensions of the singular homology vector spaces with real coefficients $H_n(\mathcal V)$.  Any $C^1$-smooth function on  $\mathcal V$ has at least $\cl(\mathcal V)+1$ critical points, and any $C^2$-smooth function on $\mathcal V$ has at least ${\rm sb}(\mathcal V)$ critical points provided that they are all nondegenerate.  These statements are respective consequences of the Ljusternik-Schnirelmann theorem and the Morse inequality; see, e.g.~\cite[Section 5.2.2]{Cha1},~\cite[Chapter V]{Sch} or~\cite[Section 3.4]{BanHur} for more details. In the case of the $N$-torus $\mathcal V=(\mathbb R/2\pi\mathbb Z)^N$ one has $\cl(\mathcal V)=N$ and ${\rm sb}(\mathcal V)=2^N$.
 
 \medbreak
 
We shall develop results of the kind described above for a class of functionals defined on the product $\MM=E\times\mathcal V$, the Hilbert space $E$ being possibly infinite-dimensional. Our functionals will display a `saddle-like' geometry in the first variable, in line with other results which were amply studied in the literature some 30 years ago, see e.g.~\cite{Cha0, FouWil, Liu, Szu1, Szu2}. However, these references treat cases in which the (global) Palais\,--\,Smale condition holds, and we are here interested in allowing the existence of degenerate directions  along which compactness may fail.   

\medbreak

Precisely, let $E$ be a real Hilbert space, endowed with the scalar product $\langle\cdot,\cdot\rangle$ and the associated norm $\|\cdot\|$. Let $L:E\to E$ be a bounded selfadjoint linear operator, and assume that $E$ splits as the orthogonal direct sum
\begin{equation}\label{splitting}
E=E_-\oplus E_0\oplus E_+\,,
\end{equation}
where $E_0=\ker L\ne\{0\}$ is finite-dimensional and $E_\pm$ are closed subspaces which are invariant for $L$. We further assume that $L$ is positive definite on $E_+$ and negative definite on $E_-$, i.e., there is some constant $\ee_0>0$ such that
\begin{equation}\label{aboutL}
\begin{cases}
\vspace{2mm}
\langle Le_-,e_-\rangle\le-\ee_0\|e_-\|^2,&\hbox{for every }e_-\in E_-\,,\\
\langle Le_+,e_+\rangle\ge\ee_0\|e_+\|^2,&\hbox{for every }e_+\in E_+\,.
\end{cases}
\end{equation}

Let ${\cal V}$ be a finite-dimensional compact $C^2$-smooth manifold without boundary, let $\MM:=E\times\mathcal V$, and let $\psi\in C^1(\MM)$ be given. We shall be interested in the associated  functional $\Phi_\psi:\MM\to\R$, defined by
\begin{equation}\label{functional}
\Phi_\psi(e,v)=\half\langle Le,e\rangle+\psi(e,v)\,.
\end{equation}

\begin{definition}[Class $\mathscr A$] We shall say that $\psi$ belongs to the class $\mathscr A$ provided that:
\begin{enumerate}
\item[{\bf [}${\bm \psi_1}${\bf ]}]\ \  $\psi$ is bounded, there exists some $\ell\in\R$ such that 
$
\psi(e,v)\not=\ell$ for every $(e,v)\in\MM
$, 
and
\begin{equation}\label{lim1}
\mathop{\lim_{\|e_0\|\to \infty}}_{e_0\in E_0}\psi(e_0+b,v)=\ell\,,
\end{equation}
uniformly with respect to $b$ belonging to bounded subsets of $E$ and $v\in\mathcal V$;
\item[{\bf [}${\bm \psi_2}${\bf ]}\ \ ] The partial gradient map $\nabla_E\psi$ is is globally compact (i.e., its image is relatively compact in $E$),  and 
\begin{equation}\label{lim2}
\mathop{\lim_{\|e_0\|\to \infty}}_{e_0\in E_0}\nabla_E\psi(e_0+b,v)=0\,,
\end{equation}
 uniformly with respect to $b$ belonging to bounded subsets of $E$ and $v\in\mathcal V$.  
\end{enumerate}
\end{definition}

\begin{definition}[Class $\mathscr A^+$]
We shall say that the $C^2$-smooth functional $\psi$ belongs to the class $\mathscr A^+$ provided that it  not only belongs to the class $\mathscr A$, but also satisfies:  
\medbreak
\begin{enumerate}
\item[{\bf [}${\bm \psi_3}${\bf ]}]\ \ There exists  $R>0$ such that
$\nabla_{\MM}\psi(e_0,v)\in E_0$  whenever $(e_0,v)\in E_0\times\mathcal V,\ \|e_0\|\ge R$;
\item[{\bf [}${\bm \psi_4}${\bf ]}]\ \ The partial Hessian map $\hess_E\psi:\MM\to {\mathscr L}(E)$ is bounded and satisfies 
\begin{equation}\label{lim3}
\mathop{\lim_{\|e_0\|\to \infty}}_{e_0\in E_0}\hess_E\psi(e_0+b,v)=0\,,
\end{equation}
 uniformly with respect to $b$ belonging to bounded subsets of $E$ and $v\in\mathcal V$;
\item[{\bf [}${\bm \psi_5}${\bf ]}]\ \ all critical points of $\Phi_\psi$ are nondegenerate. 
\end{enumerate}
\end{definition}
\medbreak

In view of (\ref{aboutL}) and {\bf [}${\bm \psi_1}${\bf ]}, the functional $\Phi_\psi$ has a (possibly strongly indefinite) global saddle geometry. The following observation will play an important role in our discussion.
\begin{lemma}\label{lemmaPS}{ If $\psi\in\mathscr A$, then $\Phi_\psi$ satisfies the Palais\,--\,Smale condition (PS)$_c$ for every level $c\ne \ell$. 
	}
\end{lemma}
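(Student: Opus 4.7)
Let $(e_n,v_n)\subseteq\MM$ be a Palais\,--\,Smale sequence at level $c\ne\ell$, i.e., $\Phi_\psi(e_n,v_n)\to c$ and $\nabla\Phi_\psi(e_n,v_n)\to 0$. Write $e_n=e_n^-+e_n^0+e_n^+$ according to (\ref{splitting}). Since $\mathcal V$ is compact we may pass to a subsequence so that $v_n\to v_*$. The plan is to show, in order: (a) $\{e_n^\pm\}$ is bounded, (b) $\{e_n^0\}$ is bounded, and (c) $\{e_n\}$ admits a convergent subsequence.

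For (a), project $\nabla_E\Phi_\psi(e_n,v_n)=Le_n+\nabla_E\psi(e_n,v_n)$ onto $E_\pm$. Since $E_\pm$ are $L$-invariant, this yields $Le_n^\pm+\nabla_{E_\pm}\psi(e_n,v_n)=o(1)$. The global compactness of $\nabla_E\psi$ in {\bf [}${\bm \psi_2}${\bf ]} entails that its image is bounded, so the Cauchy\,--\,Schwarz estimate together with (\ref{aboutL}) gives
$
\ee_0\|e_n^\pm\|^2\le|\langle Le_n^\pm,e_n^\pm\rangle|\le\|Le_n^\pm\|\,\|e_n^\pm\|\,,
$
so $\|e_n^\pm\|\le\ee_0^{-1}\|Le_n^\pm\|$ stays bounded.

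For (b), argue by contradiction: suppose, along a subsequence, that $\|e_n^0\|\to\infty$. Setting $b_n:=e_n^-+e_n^+$, which is bounded by (a), the uniform limit (\ref{lim2}) gives $\nabla_{E_\pm}\psi(e_n^0+b_n,v_n)\to 0$, hence also $Le_n^\pm\to 0$ and, by (\ref{aboutL}) again, $e_n^\pm\to 0$. This forces $\langle Le_n,e_n\rangle=\langle Le_n^-,e_n^-\rangle+\langle Le_n^+,e_n^+\rangle\to 0$, while the uniform limit (\ref{lim1}) yields $\psi(e_n,v_n)\to\ell$. Thus $\Phi_\psi(e_n,v_n)\to\ell$, contradicting our assumption $c\ne\ell$. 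The hypothesis $c\ne\ell$ enters precisely here; this is the only delicate step and really the whole point of the proof.

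For (c), we now know $\{e_n\}$ is bounded. On the finite-dimensional subspace $E_0$ we extract a convergent subsequence $e_n^0\to e_*^0$. On $E_\pm$ the operator $L$ is a selfadjoint bounded operator satisfying (\ref{aboutL}), hence an isomorphism onto $E_\pm$ with $\|L|_{E_\pm}^{-1}\|\le\ee_0^{-1}$. Since $\{(e_n,v_n)\}$ is bounded and $\nabla_E\psi$ has relatively compact image in $E$, passing to a further subsequence $\nabla_{E_\pm}\psi(e_n,v_n)$ converges, and therefore so does $Le_n^\pm=-\nabla_{E_\pm}\psi(e_n,v_n)+o(1)$. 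Applying $L|_{E_\pm}^{-1}$ we obtain convergence of $e_n^\pm$, and the three pieces together yield a convergent subsequence of $(e_n,v_n)$ in $\MM$.
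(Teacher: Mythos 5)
Your proof is correct, and it follows exactly the line the paper alludes to ("depends on the fact that $\nabla_E\psi$ is globally compact, follows standard arguments"): bound $e_n^\pm$ via the definiteness of $L$ and the boundedness of $\nabla_E\psi$, rule out $\|e_n^0\|\to\infty$ by showing via \eqref{lim1}--\eqref{lim2} that this would force $\Phi_\psi(e_n,v_n)\to\ell$, and then recover compactness by inverting $L$ on $E_\pm$ against the relatively compact $\nabla_E\psi$-sequence. This is the standard argument the paper chose to omit.
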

The proof of this statement, which depends on the fact that $\nabla_E\psi$ is globally compact, follows standard arguments and will be omitted.
We shall show the following:

\begin{theorem}\label{absth1}
{{\bf\em (a).} Assume that $\psi$ belongs to the class $\mathscr A$; then, $\Phi_\psi$ has at least ${\rm cl}(\mathcal V)+1$ critical points. {\bf\em (b).} If $\psi$ belongs to the class $\mathscr A^+$\!\!\!, then $\Phi_\psi$ has at least ${\rm sb}({\cal V})$ critical points.}
\end{theorem}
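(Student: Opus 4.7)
The plan is to establish both parts via minimax (for (a)) and Morse-theoretic (for (b)) arguments on $\mathcal M=E\times\mathcal V$, exploiting the indefinite saddle geometry of $L$, the compactness of $\nabla_E\psi$, and the topology of $\mathcal V$. The new difficulty relative to the Szulkin-type results \cite{Szu1,Szu2} is the failure of the Palais--Smale condition at the level $\ell$ (Lemma \ref{lemmaPS}); this is compensated by the fact that ${\bf [}\bm\psi_1{\bf ]}$ prevents $\ell$ from being a genuine critical value. The basic geometric setup is that, by (\ref{aboutL}) and the boundedness of $\psi$, for each fixed $(e_0,v)$ the map $(e_-,e_+)\mapsto\Phi_\psi(e_-+e_0+e_+,v)$ tends to $-\infty$ on $E_-$ and $+\infty$ on $E_+$, while $\Phi_\psi$ is bounded on $E_0\times\mathcal V$; this yields a strong linking between $E_0\times\mathcal V$ and affine copies of $E_+$, so that the minimax values stemming from this geometry lie in a fixed interval $[a,b]$.

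For part (a), I would apply the Fadell--Rabinowitz cuplength machinery. Given cohomology classes $\alpha_1,\dots,\alpha_k\in H^{\geq 1}(\mathcal V)$ with nonvanishing cup product, one associates a decreasing sequence of admissible families $\mathcal F_0\supseteq\mathcal F_1\supseteq\cdots\supseteq\mathcal F_{\cl(\mathcal V)}$ and minimax levels
\[
c_k \;=\; \inf_{F\in\mathcal F_k}\sup_{(e,v)\in F}\Phi_\psi(e,v)\,,
\]
which by the standard deformation lemma (and (PS)$_c$ for $c\ne\ell$) are critical values of $\Phi_\psi$, with coincidences forcing topological largeness of the critical set. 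This yields $\cl(\mathcal V)+1$ distinct critical points provided no $c_k$ equals $\ell$. To handle the case $c_k=\ell$, I would perturb $\psi$ by a smooth term supported in the region $\|e_0\|>\rho$ for some large $\rho$, chosen both to shift the minimax levels off $\ell$ and to restore (PS) globally; then ${\bf [}\bm\psi_1{\bf ]}$ and ${\bf [}\bm\psi_2{\bf ]}$ ensure that, for small perturbations, the critical points of the perturbed functional lie in a uniformly bounded region of $E_0$ and persist in the limit as critical points of $\Phi_\psi$.

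For part (b), under ${\bf [}\bm\psi_3{\bf ]}$--${\bf [}\bm\psi_5{\bf ]}$ the critical set of $\Phi_\psi$ is compact and all its elements nondegenerate. I would apply infinite-dimensional Morse theory: the compactness of $\nabla_E\psi$ (so that $\nabla\Phi_\psi-Le$ is a compact perturbation of the linear map $Le$), together with the boundedness and vanishing at infinity of the Hessian from ${\bf [}\bm\psi_4{\bf ]}$--${\bf [}\bm\psi_5{\bf ]}$, allow a Leray--Schauder or Conley-index framework in which each critical point has a finite relative Morse index. The Morse inequalities applied to the pair $(\{\Phi_\psi\le b\},\{\Phi_\psi\le a\})$ with $a<\ell<b$ chosen to avoid every critical value then give a lower bound on the number of critical points by the sum of the Betti numbers of this pair; the saddle-linking geometry combined with ${\bf [}\bm\psi_1{\bf ]}$ and ${\bf [}\bm\psi_3{\bf ]}$ (which trivialize the functional along $E_0$ at infinity) identify this sum with ${\rm sb}(\mathcal V)$.

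The main obstacle will be the combination of the non-uniform smallness of $\nabla_E\psi$ and the loss of (PS) at $\ell$: together they prevent a direct Amann--Conley--Zehnder saddle-point reduction of $\Phi_\psi$ to a finite-dimensional functional on $E_0\times\mathcal V$. My workaround, as indicated above, is a two-level approximation---a Galerkin truncation in $E_\pm$ reducing to finite dimensions on which Szulkin's theorems apply, combined with a compactly supported perturbation in $E_0$ to restore (PS)---and the hypotheses ${\bf [}\bm\psi_1{\bf ]}$--${\bf [}\bm\psi_5{\bf ]}$ are exactly what yields the uniform a priori bounds needed to pass to the limit in both approximations.
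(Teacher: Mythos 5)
Your general plan for part (a) (indefinite cuplength/linking minimax, compensating for the loss of Palais--Smale at the level $\ell$) is in the right spirit but misses the key simplification that makes the paper's argument clean: after the normalizations $L=\pi_+-\pi_-$ and $\psi<\ell$ (\ref{psi}), one has $\Phi_\psi<\ell$ on all of $E_-\times\mathcal V$, so the linking set $\overline{\mathbb B_R^{E_-}}\times\mathcal V$ is contained in a sublevel $\Phi^b$ with $b<\ell$, while the complementary set $N(R)$ contains a sublevel $\Phi^a$ with $a<b$. Hence every relevant minimax level lies in $[a,b]\subset(-\infty,\ell)$, where (PS) holds, and no perturbation is needed. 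Your proposed workaround for $c_k=\ell$ by perturbing $\psi$ in the region $\|e_0\|>\rho$ is precisely where the argument is fragile: the failure of (PS) at $\ell$ means minimax sequences may escape to infinity in $E_0$, and it is not clear that critical points of the perturbed functional stay in a uniformly bounded region and converge to critical points of $\Phi_\psi$ as the perturbation is removed -- you would be reintroducing the very noncompactness you are trying to avoid. The paper's mechanism (Szulkin's relative category ${\rm cat}_{\mathcal M,N}^{\mathcal D}$ rather than Fadell--Rabinowitz index, Lemmas \ref{leem1} and \ref{lema}) is technically different but accomplishes the same goal; the decisive point is the level trapping, not the specific index.

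For part (b) there is a genuine error: you propose applying the Morse inequalities to the pair $(\Phi^b,\Phi^a)$ with $a<\ell<b$, but this is exactly the window where Lemma \ref{lemmaPS} fails, so the Morse-inequality lemma (which requires (PS)$_c$ at \emph{every} $c\in[a,b]$) cannot be invoked. The paper instead shows (via the class $\mathscr A^*$ and the finite-dimensional reduction of Section \ref{sec8}) that all critical values can be squeezed into an interval $[a,b]$ with $a<b<\ell$: by (\ref{eu314}) critical points lie in $B^-_{R_1}\cap B^+_{R_1}$, by $\mathscr A^*$-(i) they lie over a compact $K\subseteq E_0$, and $\mathscr A^*$-(ii) bounds $\Phi_\psi$ on that region strictly below $\ell$. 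The homology identification ${\rm sb}(\Phi^b,\Phi^a)\ge{\rm sb}(\mathcal V)$ then requires the chain of strong deformation retracts (Claims I--III of Proposition \ref{key22}) and K\"unneth, none of which is sketched in your proposal. Your two-level approximation (Galerkin-style truncation in $E_\pm$ plus a compactly supported modification in $E_0$) does track the paper's Lemmas \ref{lem8} and \ref{lem9} in outline, but without the correct placement of the levels $a,b$ relative to $\ell$ and the explicit computation of the relative Betti numbers, the argument does not close.
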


The two parts of this theorem will be proved, respectively, in Sections~\ref{sec6} and~\ref{sec7}. In this context, there are two additional assumptions which can be introduced without loss of generality and will simplify the proof of Theorem \ref{absth1}. From now on we denote by $\pi_-,\, \pi_0$ and $\pi_+$ to the orthogonal projections of $E$ into $E_-,\, E_0$ and $E_+$, respectively.
\begin{itemize}
\item Firstly, we notice that by an observation made in~\cite[Remark~1.10]{BenCapFor} (see also~\cite[p.~732]{Szu1}), we may assume that
\begin{equation}\label{L}
L=\pi_+-\pi_-\,.
\end{equation} 
Indeed, $\prec e,e'\succ:=\langle (L\pi_+-L\pi_-+\pi_0)e,e'\rangle$ defines an equivalent scalar product on $E$ and $\langle Le,e\rangle=\prec(\pi_+-\pi_-)e, e\succ\ \forall e\in E$.  
\item Secondly, we observe that after possibly changing the signs of  $\psi, L$ and $\ell$, there is no loss of generality in further assuming that 
\begin{equation}
\label{psi}\psi(e,v)<\ell\,,\qquad\text{ for every }(e,v)\in E\times\mathcal V\,. 
\end{equation}
\end{itemize}

A couple of remarks will help prepare the proof of Theorem~\ref{szumod}. To begin with, we observe that a rescaling argument allows us to assume $T=1$. Secondly, without loss of generality we may strengthen  assumption {\bf [}${\bm H_2}${\bf ]} as follows:
\begin{enumerate}
	\item[{\bf [}${\bm H_2^*}${\bf ]}]  $h$ has a finite limit $\ell$ as $|y|\to\infty$; furthermore, $H(t,x,y)\not=\ell$ for all $(t,x,y)\in[0,T]\times\mathbb R^{2N}$.
\end{enumerate}
Indeed, under assumptions {\bf [}${\bm H_{1-3}}${\bf ]} and assuming, to fix ideas, that $h(y)<\ell$ for $|y|\geq R_0$, we may replace $H$ by the Hamiltonian
$$
\breve H(t,x,y):=\begin{cases}H(t,x,y)\,,&\text{if }|y|\leq R_0\,,\\ \vspace{-0.3cm}\\ 
\alpha(h(y))\,,&\text{if }|y|\ge R_0\,,
\end{cases}
$$
where $\alpha:\mathbb R\to\mathbb R$ is a $C^2$ function with $$\alpha(\rho)=\rho\text{ if }\rho\leq\max_{|y|=R_0}h\,,\qquad \alpha'(\rho)>0\text{ for all }\rho\in\mathbb R\,,\qquad\alpha(\ell)>\sup_{[0,T]\times\mathbb R^{2N}}H.$$ This new Hamiltonian inherites {\bf [}${\bm H_{1-3}}${\bf ]} (or  {\bf [}${\bm H_{1-6}}${\bf ]}) from $H$, and further satisfies the reinforced assumption {\bf [}${\bm H_2^*}${\bf ]}. Finally, one easily checks that the corresponding Hamiltonian system has the same periodic solutions.

\medbreak

Let us conclude this section by showing how Theorem~\ref{szumod} follows from the abstract Theorem~\ref{absth1}. To do so we need to write the $1$-periodic solutions of our spatially-periodic Hamiltonian system $(HS)$ as the critical points of a suitable functional defined on the cartesian product of a Hilbert space $E$ and the $N$-torus $(\mathbb R/2\pi\mathbb Z)^N=\mathcal V$. The arguments are mostly well-known (see, e.g.~\cite[Chapter 6]{Rab} or~\cite[Section 4]{Szu1}); for this reason we describe them only briefly.

\begin{proof}[Proof of Theorem \ref{szumod}]Let us denote by $\widetilde H^{\frac{1}{2}}(\R/\Z,\R^N)$ to the closed subspace of $H^{\frac{1}{2}}(\R/\Z,\R^N)$-functions with zero mean. We consider the Hilbert space $E=\widetilde H^{\frac{1}{2}}(\R/\Z,\R^N)\times H^{\frac{1}{2}}(\R/\Z,\R^N)$ and set $\mathcal M:=E\times(\mathbb R/2\pi\mathbb Z)^N$. One identifies each element $z=(x,y)\in H^{\frac{1}{2}}(\R/\Z,\R^N)\times H^{\frac{1}{2}}(\R/\Z,\R^N)$ with $((\tilde x,y),\bar x)\in\mathcal M$, where 
	$$\bar x:=\int_0^1 x(t)dt\ {\rm mod}\ 2\pi\mathbb Z^N\in(\mathbb R/2\pi\mathbb Z)^N\quad\text{ and }\quad\tilde x:=x-\bar x\in \widetilde H^{\frac{1}{2}}(\R/\Z,\R^N)\,,$$ so that $z(t)=(\bar x+\tilde x(t),y(t))$. Then, that the search of geometrically distinct $1$-periodic solutions of $(HS)$ is equivalent to finding critical points of the action functional 
$$\Phi:\mathcal M\to\mathbb R\,,\qquad\Phi((\tilde x,y),\bar x):=\frac{1}{2}\int_0^1({\dot z}(t),|J z(t))\,dt+\int_0^1 H(t,z(t))dt\,.$$
 
 \medbreak
 
 It can be shown \cite[Section 6]{Rab} that there exists a bounded selfadjoint linear operator $L:E\to E$ such that $\int_0^1({\dot z}(t)|J z(t))\,dt=\langle L(\tilde x,y),(\tilde x,y)\rangle_E$. Moreover, the Hilbert space $E$ splits in the form (\ref{splitting}) for some infinite-dimensional closed subspaces $E_\pm$ satisfying (\ref{aboutL}) and $E_0:=\{0\}\times\mathbb R^N$. Thus, $\Phi=\Phi_\psi$, where $$\psi:\mathcal M\to\mathbb R\,,\qquad \psi((\tilde x,y),\bar x):=\int_0^1 H(t,\bar x+\tilde x(t),y(t))dt$$
is the Nemytski\u{\i} functional associated to $H$.

\medbreak

Since $H^{\frac{1}{2}}(\R/\Z,\R^N)\subseteq L^2(\R/\Z,\R^N)$ and the inclusion is (completely) continuous, one promptly checks that if the admissible Hamiltonian $H$ is bounded then $\Phi$ is continuous, if $\nabla H$ is bounded then $\psi$ has class $C^1$, and if $H$ is twice continuously differentiable with respect to $z=(x,y)$ and its associated Hessian map ${\rm Hess}_z\psi$ is bounded, then $\psi$ has class $C^2$.   In addition, assumptions {\bf [}${\bm H_{1-3}}${\bf ]}-{\bf [}${\bm H_{2}^*}${\bf ]} imply that $\psi$ belongs to the class $\mathscr A$, while assumptions {\bf [}${\bm H_{1-6}}${\bf ]}-{\bf [}${\bm H_{2}^*}${\bf ]} imply that $\psi$ belongs to the class $\mathscr A^+$. It completes the proof of Theorem~\ref{szumod}. 
\end{proof}

\end{section}

\begin{section}{Relative category in infinite dimensions}\label{sec6}

This section, which follows along the lines of \cite{Szu1}, is aimed to prove the first part of Theorem \ref{absth1}. Accordingly, we assume that the Hilbert space $E=E_-\oplus E_0\oplus E_+$, the  finite-dimensional manifold $\mathcal V$, and the linear operator $L:E\to E$ are as required in Section \ref{sec5}. In addition, fix some  functional $\psi:\mathcal M:=E\times\mathcal V\to\mathbb R$ in the class $\mathscr A$, and write, for simplicity, $\Phi:=\Phi_\psi$.

\medbreak

 Let $\varXi=\{z\in \MM:\nabla_{\MM}\Phi(z)=0\}$ be the set of critical points of $\Phi$. By a pseudogradient vector field $V$ associated to $\Phi$ we mean  a locally Lipschitz-continuous map $V=(V_1,V_2):\mathcal M\backslash\varXi\to E\times T{\mathcal V}$ satisfying
  $$
 V_2(z)\in T_v{\mathcal V},\qquad \|V(z)\|\le 2\|\nabla_{\MM}\Phi(z)\|,\qquad   \langle\nabla_{\MM}\Phi(z),V(z)\rangle\ge\half\|\nabla_{\MM}\Phi(z)\|^2\,,
 $$
for every $z=(p,v)\in\mathcal M\backslash\varXi$. It can be seen~\cite[pp.~730--731]{Szu1} that a pseudogradient vector field $V=(V_1,V_2)$ can be found with first component of the form 
 \begin{equation}\label{V1}
 V_1(e,v)=Le+W(e,v),
 \end{equation}
 where $W:\MM\setminus\varXi\to E$  completely continuous and bounded. 
 
 \medbreak
 
 By a {\em deformation} of $\mathcal M$ we mean a continuous map $\eta:[0,1]\times\mathcal M\to\mathcal M$ with $\eta(0,m)=m$ for every $m\in\mathcal M$. The set of all deformations of $\mathcal M$ is, in some sense, too big to be useful, and for this reason we introduce a class of deformations below.
 \begin{definition}[Class $\mathcal D$]A deformation $\eta$ of $\mathcal M$ belongs to the class $\mathcal D$ provided that
  \begin{enumerate}
 	\item[$(\odot)$] for every $m\in \MM\setminus\varXi$, the curve $\eta(\cdot\,,m)$ moves forward along the integral curves of $V$, in the sense that for any $t_1<t_2$ in $[0,1]$, there are $\tau_1\le\tau_2$ in the domain $[0,\omega[$ of the integral curve $\gamma$ with $\gamma(0)=m$, such that $\eta(t_1,m)=\gamma(\tau_1)$ and $\eta(t_2,m)=\gamma(\tau_2)$;
 	\item[$(\odot\odot)$] Setting 	$\eta(t,e,v)=\big(\eta_1(t,e,v)\,,\eta_2(t,e,v)\big)$, one has
 	\begin{equation}\label{eta1}
 	\eta_1(t,e,v)=q_-(t,e,v)\pi_-(e)+q_+(t,e,v)\pi_+(e)+K(t,e,v)\,,\quad (t,e,v)\in[0,1]\times E\times{\cal V}\,,
 	\end{equation}
 	for some continuous functions $q_\pm:[0,1]\times \MM\to\R$ which are bounded on bounded sets and satisfy  $q_\pm(0,e,v)=1$, and a completely continuous map $K:[0,1]\times\MM\to E$ with $K(0,e,v)=\pi_0(e)$.
 	
 \end{enumerate}
\end{definition}
 \medbreak
 
 Our class $\mathcal D$ is similar to the homonymous one considered by Szulkin in~\cite{Szu1}; however, we have slightly relaxed some of his requirements for simplicity. Thus, our class contains more deformations than Szulkin's;  however, we will keep the essential properties of his relative category.
 
 \medbreak
 
Having delimited the family of allowed deformations, we are now ready to construct an associated notion of relative category. The definition below is due to Szulkin \cite{Szu1}.
\begin{definition}
	[Relative category with respect to the class $\mathcal D$]Given two closed sets $A,N\subseteq\MM$ we shall say that $A$ is of category $k\ge 0$ relative to ${N}$ and $\mathcal D$, written ${\rm cat}_{\MM,{N}}^{\cal D}(A)=k$, provided that $k$ is the smallest integer such that
	\begin{equation}\label{cates00}
	\text{$A=A_0\cup A_1\cup\ldots\cup A_k$,\qquad\ \  $A_0,\ldots,A_k$ closed and $A_1,\ldots, A_k$ contractible in $M$,}
	\end{equation}
	and there exists a deformation $\eta_0\in\mathcal D$ satisfying 
	\begin{equation}\label{eta00}
	\eta_0(\{1\}\times A_0)\subseteq N\,,\;\text{ and }\;\; \eta_0([0,1]\times N)\subseteq N\,.
	\end{equation} 
	If no such a $k$ exists, ${\rm cat}_{\MM,{N}}^{\cal D}(A):=+\infty$.
	
\end{definition}

Let us collect some selected properties of the relative category with respect to $\mathcal D$. We denote by
$
\Phi^c=\{z\in \MM:\Phi(z)\le c\}$ to the sublevel sets of $\Phi$. The closed set $N_1\subseteq M$ is called invariant for the class $\mathcal D$ if $\eta([0,1]\times N_1)\subseteq N_1$ for any $\eta\in\mathcal D$.
 
 \begin{lemma}\label{leem1}
 {The following hold:
 	\begin{enumerate}
 		\item[(i)] $\cat^{\mathcal D}_{M,N}(A_2)\leq\cat^{\mathcal D}_{M,N}(A_1)$, for any closed sets $A_2\subseteq A_1\subseteq M$.
 		\item[(ii)] Let $N_2\subseteq N_1\subseteq M$ be closed and assume that $N_1$ is invariant for the class $\mathcal D$. Then, $\cat^{\mathcal D}_{M,N_1}(A)\leq\cat^{\mathcal D}_{M,N_2}(A)$, for any closed set $A\subseteq M$.
\item[(iii)] Let the levels $a<b$ be such that the Palais\,--\,Smale condition (PS)$_c$ holds for every $c\in[a,b]$.  Then, $\Phi$ has at least ${\rm cat}_{\MM,\Phi^a}^{\cal D}(\Phi^b)$ critical points in $\Phi^{-1}([a,b])$.
\end{enumerate}}

\begin{proof}
{\em (i)} and {\em (ii)} are immediate from the definitions.   To check
{\em (iii)} it suffices to transcribe the proof of~\cite[Proposition 3.2]{Szu1}, in which the Palais\,--\,Smale condition is assumed for all levels and not only for the candidate critical ones.
\end{proof}

\end{lemma}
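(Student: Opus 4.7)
The plan is to handle (i) and (ii) by straightforward bookkeeping with the definition of relative category, and to deduce (iii) by a standard minimax argument transcribed from \cite[Proposition~3.2]{Szu1}.

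For (i), I would start from a decomposition $A_1 = A_0 \cup A_1^{(1)} \cup \ldots \cup A_k^{(1)}$ witnessing $\cat^{\mathcal D}_{\MM,N}(A_1) = k$ together with the associated deformation $\eta_0 \in \mathcal D$, and simply intersect every piece with $A_2$. The sets $A_i^{(1)} \cap A_2$ remain closed, any null-homotopy of $A_i^{(1)}$ in $\MM$ restricts to one of $A_i^{(1)} \cap A_2$, and the inclusion $A_0 \cap A_2 \subseteq A_0$ ensures that $\eta_0(\{1\}\times(A_0 \cap A_2)) \subseteq N$; since $\eta_0([0,1]\times N)\subseteq N$ is unchanged, this yields $\cat^{\mathcal D}_{\MM,N}(A_2) \leq k$. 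Part (ii) is even more immediate: any decomposition and deformation $\eta_0$ witnessing $\cat^{\mathcal D}_{\MM,N_2}(A) \leq k$ automatically witnesses $\cat^{\mathcal D}_{\MM,N_1}(A) \leq k$, since $\eta_0(\{1\}\times A_0) \subseteq N_2 \subseteq N_1$ and $\eta_0([0,1]\times N_1)\subseteq N_1$ is the hypothesised $\mathcal D$-invariance of $N_1$.

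For (iii), the main tool is a deformation lemma: for every non-critical value $c \in [a,b]$, (PS)$_c$ supplies a uniform lower bound on $\|\nabla_{\MM}\Phi\|$ on a horizontal strip around the level $\{\Phi = c\}$, and integrating an appropriately truncated rescaling of the pseudogradient $V$ produces $\epsilon > 0$ and $\eta \in \mathcal D$ moving one sublevel set across the strip into another. The standard minimax values
\[
c_j := \inf\{c \in \R : \cat^{\mathcal D}_{\MM,\Phi^a}(\Phi^c) \geq j\}, \qquad j = 1, \ldots, \cat^{\mathcal D}_{\MM,\Phi^a}(\Phi^b),
\]
then form a non-decreasing sequence in $[a,b]$ by (i), and combining the deformation lemma with (i), (ii), and the evident subadditivity $\cat^{\mathcal D}_{\MM,N}(B \cup D) \leq \cat^{\mathcal D}_{\MM,N}(B) + 1$ valid for $D$ closed and contractible in $\MM$, one concludes in the usual way that every $c_j$ is a critical value of $\Phi$ and that coincidences $c_j = c_{j+1} = \ldots = c_{j+m}$ force $\Phi^{-1}(c_j)$ to carry at least $m+1$ distinct critical points, by the standard neighborhood trick that excises small balls around assumed-isolated critical points without lowering the relative category.

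The main obstacle is verifying that the flow produced in the deformation lemma genuinely belongs to $\mathcal D$ rather than to the wider class of all deformations. This is precisely why the pseudogradient was chosen with first component (\ref{V1}) of the form $V_1(e,v) = Le + W(e,v)$ with $W$ completely continuous and bounded: integrating this ODE, the flow on $E_\pm$ is the contraction/expansion flow of $L$ up to a completely continuous perturbation, yielding the scalar coefficients $q_\pm$, while on $E_0$ one picks up only the completely continuous term $K$ required by $(\odot\odot)$. Once this structural point is settled, the remaining verifications---continuity, monotonicity along $V$, compactness from (PS)$_c$---are routine and the argument proceeds as in \cite[Proposition~3.2]{Szu1}.
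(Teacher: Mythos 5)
Your proposal is correct and follows the same route as the paper: (i) and (ii) are obtained by routine unwinding of the definitions (your intersection argument for (i) and the inclusion $N_2\subseteq N_1$ plus $\mathcal D$-invariance of $N_1$ for (ii) are exactly what the paper means by ``immediate''), while (iii) is proved by transcribing Szulkin's Proposition 3.2 with the minimax values $c_j$, the quantitative deformation lemma, and the verification that the pseudogradient flow of the form (\ref{V1}) yields deformations in the class $\mathcal D$. The paper states this much more tersely, but you have filled in the same steps it delegates to the reference.
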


A key step in our argument towards the proof of Theorem~\ref{absth1}{\bf\em (a)} will be a refinement of~\cite[Proposition 3.6]{Szu1} which we examine next. We set
$$A(R)=\overline{\B_R^{E_-}}\times{\cal V}\,,\quad N(R):=\big((E_-\setminus \B_{R}^{E_-})+ E_0+ E_+\big)\times{\cal V}\,.$$
\begin{lemma}\label{lema}
	{ ${\rm cat}_{\MM,N(R)}^{\cal D}\big(A(R)\big)\ge {\rm cl}({\cal V})+1\,,\text{ for any }R>0.$}
\begin{proof}
	Using a contradiction argument, assume the existence of some $R>0$ such that  $${\rm cat}_{\MM,N(R)}^{\cal D}\big(A(R)\big)\le k:={\rm cl}({\cal V})\,.$$ It means that there are closed sets $A_0,A_1,\ldots,A_k\subseteq M$ and a deformation $\eta_0\in\mathcal D$ satisfying  (\ref{cates00})-(\ref{eta00}) for $A=A(R)$ and $N=N(R)$.
	We write $\eta_0=(\eta_1,\eta_2)$, with
	$\eta_1$ having the form~(\ref{eta1})
	for some continuous functions $q_\pm:[0,1]\times \MM\to\R$  with $q_\pm(0,e,v)=1$, and a completely continuous map $K:[0,1]\times\MM\to E$ with $K(0,e,v)=\pi_0(e)$, for every $(e,v)\in \MM$. 
	
	\medbreak
	
	The map $K$ being completely continuous, there are finite-dimensional subspaces $F_\pm\subseteq E_\pm$,  and a continuous map $C:[0,1]\times \MM\to F:=F_-\oplus E_0\oplus F_+$, with
	\begin{equation}\label{K2}
	\|K(t,e,v)-C(t,e,v)\|<\half\,,\;\hbox{ for every }(t,e,v)\in[0,1]\times A(R)\,.
	\end{equation}
	Furthermore, since $K(0,e,v)=\pi_0(e)$, after possibly replacing $C(t,e,v)$ by $C(t,e,v)-C(0,e,v)+\pi_0(e)$, we see that $C$ can be taken with $C(0,e,v)=\pi_0(e)$,  for every $(e,v)\in \MM$. We consider the sets
	$$
	\MM^*:=F_-\times{\cal V}\,,\quad N_2:=\mathbb S^{F_-}\times{\cal V}\,,\quad N_1:=\left(\overline{\mathbb B^{F_-}_{3/2}}\backslash\mathbb B^{F_-}_{1/2}\right)\times{\cal V}\,,\quad
	A^*:=A(R)\cap \MM^*=\overline{\B^{F_-}}\times\mathcal V\,,
	$$
	and $A_j^*=A_j\cap \MM^*$, for $j=0,1,\dots,\kappa$. Observe that $A^*=A_0^*\cup A_1^*\cup\dots\cup A_\kappa^*$, and $A^*_j$ is either empty or contractible in $\MM^*$, for every $j=1,\dots,\kappa$. We define $\eta^*:[0,1]\times \MM^*\to \MM^*$ by the rule
	$$
	\eta^*(t,f_-,v)=\Big(q_-(t,f_-,v)f_-+\pi_-\left( C(t,f_-,v)\right)\,,\,\eta_2(t,f_-,v)\Big)\,.
	$$
	Notice that $N_2$ is a strong deformation retract of $N_1$ relative to $M$ and $\eta^*$ is a deformation of $\mathcal M^*$ satisfying
	$$
	\eta^*([0,1]\times N_2)\subseteq N_1\,,\;\text{ and }\;\; \eta^*(\{1\}\times A_0^*)\subseteq N_1\,.
	$$
Thus,  Lemma \ref{leem2}{\em (ii)} in the Appendix 	implies that ${\rm cat}_{\MM,N_2}\big(A^*\big)\le k=\cl(\mathcal V)$. This contradicts  Lemma \ref{leem2}{\em (iii)} and completes the proof.\end{proof}

\end{lemma}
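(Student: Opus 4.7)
The plan is to argue by contradiction, assuming $\cat_{\MM, N(R)}^{\mathcal D}(A(R)) \le k := \cl(\mathcal V)$. This supplies closed sets $A_0, A_1, \ldots, A_k \subseteq \MM$ covering $A(R)$, with $A_1, \ldots, A_k$ contractible in $\MM$, and a deformation $\eta_0 = (\eta_1, \eta_2) \in \mathcal D$ sending $\{1\} \times A_0$ into $N(R)$ while keeping $N(R)$ invariant. The goal is to squeeze this infinite-dimensional setup down to a finite-dimensional picture inside $F_- \times \mathcal V$ and there to invoke a classical relative-category lower bound for ``ball times $\mathcal V$ modulo boundary sphere times $\mathcal V$''.

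The reduction to finite dimensions hinges on the structural form (\ref{eta1}) of $\eta_1$, in which the only possibly infinite-rank term is the completely continuous map $K$. Since $K$ is completely continuous and $[0,1]\times A(R)$ has bounded projection onto $E$, one can uniformly approximate $K$ on this set, within error $1/2$, by a continuous map $C:[0,1]\times\MM\to F := F_-\oplus E_0 \oplus F_+$, for some finite-dimensional subspaces $F_\pm\subseteq E_\pm$. A standard adjustment by the initial value makes $C(0,e,v)=\pi_0(e)$, matching $K$ at $t=0$. The strategy is then to define $\MM^* := F_-\times\mathcal V$, $A^* := A(R)\cap\MM^*$, $A_j^* := A_j\cap\MM^*$, and a restricted deformation
$$\eta^*(t,f_-,v) := \bigl(q_-(t,f_-,v)\,f_- + \pi_-\bigl(C(t,f_-,v)\bigr),\ \eta_2(t,f_-,v)\bigr),$$
which is a genuine self-map of the finite-dimensional manifold $\MM^*$ and preserves the covering $A^* = A_0^*\cup\cdots\cup A_k^*$ with $A_j^*$ contractible in $\MM^*$ for $j\ge 1$.

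The next step is to choose the comparison sets in $\MM^*$: let $N_2:=\mathbb S^{F_-}\times\mathcal V$ and let $N_1$ be a closed $1/2$-thickening of $N_2$ inside $F_-\times\mathcal V$. Combining $\eta_0(\{1\}\times A_0)\subseteq N(R)$ with the $1/2$-closeness of $C$ to $K$ should yield $\eta^*(\{1\}\times A_0^*)\subseteq N_1$, and similarly $\eta^*([0,1]\times N_2)\subseteq N_1$. Since $N_2$ is a strong deformation retract of $N_1$ in $\MM^*$, one may pass to the classical relative category (no restricted class of deformations needed in finite dimensions) and conclude, via Lemma \ref{leem2}(ii) of the Appendix, that $\cat_{\MM^*,N_2}(A^*)\le k = \cl(\mathcal V)$.

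The contradiction is then produced by invoking the classical lower estimate for the relative category of a ball relative to its boundary sphere, crossed with $\mathcal V$; this is exactly the content of Lemma \ref{leem2}(iii) of the Appendix, which asserts $\cat_{\MM^*,N_2}(A^*)\ge \cl(\mathcal V)+1$. The main technical obstacle is designing the approximation $C$ (together with the precise radii defining $A(R)$, $N(R)$, $N_1$ and $N_2$) so that the two crucial containments involving $N_1$ survive the finite-dimensional truncation; once those are in place, everything else is bookkeeping with the classical category.
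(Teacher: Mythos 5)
Your proposal matches the paper's argument step for step: the contradiction setup, the finite-dimensional approximation of the completely continuous map $K$ by $C$ into $F=F_-\oplus E_0\oplus F_+$, the definition of $\eta^*$ on $\MM^*=F_-\times\mathcal V$, and the passage to the classical relative category in finite dimensions via Lemma \ref{leem2}(ii) followed by the contradiction with Lemma \ref{leem2}(iii). The only difference is that you leave $N_1$ implicit as ``a closed $1/2$-thickening of $N_2$,'' whereas the paper explicitly takes $N_1=(\overline{\mathbb B^{F_-}_{3/2}}\backslash\mathbb B^{F_-}_{1/2})\times\mathcal V$; this is a cosmetic rather than structural distinction.
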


{\em Proof of Theorem~\ref{absth1}}
{{\bf\em (a).}} Since $\nabla\psi$ and the vector field $W$ in (\ref{V1}) are bounded, there exists some $R>0$ such that $N(R)$ is invariant for the class $\mathcal D$. Using now the boundedness of $\Phi$ one can find some level $a<\ell$ such that  $\Phi^a\subseteq N(R)$.
An easy argument shows that  $\displaystyle \sup_{E_-\times{\cal V}}\Phi<\ell$, allowing us to choose a second level $b$ with $\max\{a,\sup_{E_-\times{\cal V}}\Phi\}<b<\ell$\,. Then, $A(R)\subseteq\Phi^b$, and, by Lemma~\ref{leem1}{\em (i)}\,,
$$
{\rm cat}_{\MM,\Phi^a}^{\cal D}(\Phi^b)\ge {\rm cat}_{\MM,\Phi^a}^{\cal D}(A(R))\,,
$$
while, $N(R)$ being invariant for the class $\mathcal D$, Lemma ~\ref{leem1}{\em (ii)} gives
$$
{\rm cat}_{\MM,\Phi^a}^{\cal D}(A(R))\ge{\rm cat}_{\MM,N(R)}^{\cal D}(A(R))\,,
$$
and combining these inequalities with Lemma~\ref{lema}, we obtain
$$
{\rm cat}_{\MM,\Phi^a}^{\cal D}(\Phi^b)\ge {\rm cl}({\cal V})+1\,.
$$
Since the Palais-Smale condition holds at all levels $c\in[a,b]$ (by Lemma ~\ref{lemmaPS}), Lemma~\ref{leem1}{\em (iii)} provides the existence of at least $ {\rm cl}({\cal V})+1$ critical points of $\Phi$ and concludes the proof. \qed

\end{section}

\begin{section}{Morse theory and multiplicity of nondegenerate critical points}\label{sec7}

The aim of the next two sections is to prove  Theorem~\ref{absth1}{\bf\em (b)}: if $\psi$ belongs to the class $\mathscr A^+$, then it has at least ${\rm sb}(\mathcal V)$ critical points. Our approach will be divided into two steps. In this section we shall define a new class $\mathscr A^*$ of functionals $\psi$, which will be contained in $\mathscr A^+$, and we shall prove a version of Theorem~\ref{absth1}{\bf\em(b)} for functionals $\psi$ in this subclass. The second step, in Section \ref{sec8}, will consist in showing that, given some functional $\psi$ in the class $\mathscr A^+$, then there exists another functional $\psi^*$ in the class $\mathscr A^*$ such that the associated functionals $\Phi_\psi$ and $\Phi_{\psi^*}$ have exactly the same number of critical points. This will be shown provided only that $\Phi_\psi$ has finitely many critical points, and after that, the proof of Theorem~\ref{absth1} will be complete. 

\medbreak

In order to keep the notation within reasonable limits, we define, for every $R>0$,
$$B^-_R:=\Big(\mathbb B_{R}^{E_-}+E_{0}+E_+\Big)\times\mathcal V\,,\qquad B^+_R=\Big({E_-}+E_{0}+\mathbb B_{R}^{E_+}\Big)\times\mathcal V\,.$$
Correspondingly, we denote $\overline{B^-_R}:=\Big(\overline{\mathbb B_{R}^{E_-}}+E_{0}+E_+\Big)\times\mathcal V$, $\overline{B^+_R}=\Big({E_-}+E_{0}+\overline{\mathbb B_{R}^{E_+}}\Big)\times\mathcal V$, and $\partial {B^-_R}:=\Big(\mathbb S_{R}^{E_-}+E_{0}+E_+\Big)\times\mathcal V$.

\medbreak

 Pick now some functional $\psi:\MM\to\R$ in class $\mathscr A^+$; by combining (\ref{L}) and the boundedness of  $\nabla_E\psi$ we may choose $R_1>0$ so that
\begin{equation}\label{eu314}
\begin{cases}
\vspace{2mm}
(e,v)\in\mathcal M\backslash B^-_{R_1}\quad\Rightarrow\quad\langle Le+\nabla_E\psi(e,v),\pi_-(e)\rangle<0\,,\\ 
(e,v)\in\mathcal M\backslash B^+_{R_1}\quad\Rightarrow\quad\langle Le+ \nabla_E\psi(e,v),\pi_+(e)\rangle>0\,.
\end{cases}
\end{equation} 

\begin{definition}[Class $\mathscr A^*$]\label{def*}
	We shall say that $\psi$  belongs to the class $\mathscr A^*$ provided that the space $E$ is finite-dimensional and there exists a compact set $K\subseteq E_0$ satisfying:
\begin{enumerate}
 \item[{\em (i)}] all critical points of $\Phi_\psi$ belong to $C:=(E_-+K+E_+)\times\mathcal V$; \item[{\em (ii)}] $b:=\sup_{C\cap B^+_{R_1}}\Phi_\psi<\ell$;
 \item[{\em (iii)}] $K$ is a strong deformation retract of $E_0$; more precisely, there exists a deformation $m$ of $E_0$ with
  $$m(t,k)=k\text{ if }k\in K\,,\quad m(\{1\}\times E_0)= K\,,\quad \psi(\hat m\big(t,(e,v))\big)\leq\psi(e,v)\text{ if } (e,v)\in\mathcal M\,,$$
where $\hat m$ is the deformation of $\mathcal M$ given by
\begin{equation}
\label{hh2}
\hat m(t,(e,v)):=\big(\pi_-(e)+m(t,\pi_0(e))+\pi_+(e),v\big)\,,\qquad (t,(e,v))\in[0,1]\times\mathcal M\,.
\end{equation}

\end{enumerate}
\end{definition}

\begin{proposition}\label{key22}
{If $\psi$ belongs to the class $\mathscr A^*$, then  $\Phi_\psi$ has at least ${\rm sb}(\mathcal V)$ critical points.}
\begin{proof} Remembering that $\psi$ is bounded and writing for simplicity $\Phi:=\Phi_\psi$, we fix $R_2>R_1$ and $a\in\mathbb R$ so that
$\sup_{{B^+_{R_1}}\backslash B^-_{R_2}}\Phi<a<\inf_{B^-_{R_1}}\Phi$. Since $B_{R_1}^-\cap C\cap B_{R_1}^+\not=\emptyset$ it follows that $a<b$. Combining the fact that $\Phi$ satisfies the Palais-Smale condition at all levels $c\in[a,b]$ (by Lemma~\ref{lemmaPS}) with  Lemma~\ref{lemmmf} in the Appendix, we only have to check that ${\rm sb}(\Phi^b,\Phi^a)\ge{\rm sb}({\cal V})$. 

\medbreak

{\em  Claim I:} $H_*(\Phi^b,\Phi^a)\cong H_*(\Phi^b\cap\overline{B^+_{R_1}} ,\Phi^a\cap\overline{B^+_{R_1}}\backslash B^-_{R_2})$. To see this we consider the deformation $\eta_1$ of $\Phi^b$ defined by
$$\eta_1(t,(e,v)):=\Big((1-t)e+t r_1(e),v\Big)\,,$$
where $r_1(e)=r_-(\pi_-(e))+\pi_0(e)+r_+(\pi_+(e))$ and  $r_\pm:E_\pm\to E_\pm$ are defined by
$$r_+(e_+):=\begin{cases}
e_+&\text{if }\|e_+\|\leq R_1\,,\\
R_1e_+/\|e_+\|&\text{otherwise}\,,
\end{cases}\qquad\qquad  r_-(e_-):=\begin{cases}
(R_2/R_1)e_-&\text{if }\|e_-\|\leq R_1\,,\\
R_2\,e_-/\|e_-\|&\text{if }R_1<\|e_-\|<R_2\,,\\
e_-&\text{if }\|e_-\|\geq R_2\,.
\end{cases}$$
Now, Claim I follows from Lemma \ref{homotopy} in the Appendix (set $M:=\Phi^b$, $A:=\Phi^a$, $M':=\Phi^b\cap\overline{B^+_{R_1}}$, and  $A':=\Phi^a\cap\overline{B^+_{R_1}}\backslash B^-_{R_2}$).

\medbreak

{\em  Claim II:} $H_*(\Phi^b\cap\overline{B^+_{R_1}} ,\Phi^a\cap\overline{B^+_{R_1}}\backslash B^-_{R_2})\cong H_*(\Phi^b\cap\overline{B^+_{R_1}}\cap\overline{B^-_{R_2}} ,\Phi^a\cap\overline{B^+_{R_1}}\cap \partial B^-_{R_2})$. To check this statement we consider the deformation $\eta_2$ of $\Phi^b\cap\overline{B^+_{R_1}}$ defined by
$$\eta_2(t,(e,v)):=((1-t)e+tr_2(e),v)\,,$$
where $r_2(e)=\rho(\pi_-(e))+\pi_0(e)+\pi_+(e)$ and
	$$\rho(e_-):=\begin{cases}e_-&\text{if }\|e_-\|\leq R_2\,,\\
	R_2\, e_-/\|e_-\|&\text{if }\|e_-\|\leq R_2\,.
	\end{cases}
	$$
	The claim follows again from Lemma \ref{homotopy} with $M:=\Phi^b\cap\overline{B_{R_1}^+}$, $M':=\Phi^b\cap\overline{B_{R_1}^+}\cap\overline{B_{R_2}^-}$, $A:=\Phi^a\cap\overline{B^+_{R_1}}\backslash B^-_{R_2}$ and  $A'=\Phi^a\cap\overline{B^+_{R_1}}\cap \partial B^-_{R_2}$.

	\medbreak
	
	{\em  Claim III:} $H_*(\Phi^b\cap\overline{B^+_{R_1}}\cap\overline{B^-_{R_2}} ,\Phi^a\cap\overline{B^+_{R_1}}\cap \partial B^-_{R_2})\cong H_*(\mathbb B^{E_-}_{R_2}\times K\times\mathbb B^{E_+}_{R_1},\mathbb S^{E_-}_{R_2}\times K\times\mathbb B^{E_+}_{R_1})$. To check this claim we observe that the homotopy $\hat m$ (defined as in (\ref{hh2})), can be seen as a deformation of $\Phi^b\cap\overline{B^+_{R_1}}\cap\overline{B^-_{R_2}}$. The result follows from Lemma \ref{homotopy} with $M=\Phi^b\cap\overline{B^+_{R_1}}\cap\overline{B^-_{R_2}}$, $A:=\Phi^a\cap\overline{B^+_{R_1}}\cap \partial B^-_{R_2}$, $M'=\mathbb B^{E_-}_{R_2}+ K_2+\mathbb B^{E_+}_{R_1}$, and $A'=\mathbb S^{E_-}_{R_2}+ K_2+\mathbb B^{E_+}_{R_1}$. 
	\medbreak
	
		{\em  The end of the proof.} By the K\" unneth formula (Lemma~\ref{kun}), for $n=0,1,2,\dots$ we have
	\begin{equation*}
	H_n(\mathbb B^{E_-}_{R_2}\times K\times{\cal V},\mathbb S^{E_-}_{R_2}\times K\times{\cal V})\cong\prod_{i=0}^n
	[H_i(\mathbb B^{E_-}_{R_2}\times\mathcal V,\mathbb S^{E_-}_{R_2}\times\mathcal V)\otimes H_{n-i}({K})]\,,
	\end{equation*}
	and hence, the corresponding Betti numbers satisfy
	$$
	\dim H_n((\mathbb B^{E_-}_{R_2}+ K)\times{\cal V}\,,(\mathbb S^{E_-}_{R_2}+ K)\times{\cal V})\ge \dim H_n(\mathbb B^{E_-}_{R_2}\times{\cal V},\mathbb S^{E_-}_{R_2}\times{\cal V}).
	$$ 
	Using again the 
	K\" unneth formula (and combining it with Lemma~\ref{sph76}), we get
	\beq
	H_n(\mathbb B^{E_-}_{R_2}\times{\cal V},\mathbb S^{E_-}_{R_2}\times{\cal V})&&\cong\prod_{i=0}^n[H_i(\mathbb B^{E_-}_{R_2},\mathbb S^{E_-}_{R_2})\otimes H_{n-i}({\cal V})]\\
	&&\cong \R\otimes H_{n-r}({\cal V})\cong  H_{n-r}({\cal V})\,,
	\eeq
	where $r=\dim E_-$. We conclude that
	$\dim 
	H_n(\Phi_\psi^b,\Phi_\psi^a)\ge\dim H_{n-r}({\cal V})$, for any $n\geq r$, and the result follows.
\end{proof}

\end{proposition}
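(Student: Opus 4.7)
The plan is to compute the relative singular homology $H_*(\Phi_\psi^b,\Phi_\psi^a)$ for suitable levels $a<b<\ell$, then invoke the Morse inequalities (Lemma~\ref{lemmmf}) to conclude that $\Phi_\psi$ has at least ${\rm sb}(\Phi_\psi^b,\Phi_\psi^a)$ critical points; this is legitimate because $\Phi_\psi$ satisfies the Palais--Smale condition $(PS)_c$ for every $c\neq\ell$ by Lemma~\ref{lemmaPS}. The target estimate is thus ${\rm sb}(\Phi_\psi^b,\Phi_\psi^a)\geq{\rm sb}({\cal V})$.

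First I would fix the levels. Taking $b$ as in Definition~\ref{def*}(ii) and using the boundedness of $\psi$ together with the identity $\tfrac{1}{2}\langle Le,e\rangle=\tfrac{1}{2}\|\pi_+e\|^2-\tfrac{1}{2}\|\pi_-e\|^2$ coming from (\ref{L}), for $R_2>R_1$ large one can pick $a<b$ satisfying $\sup_{B^+_{R_1}\setminus B^-_{R_2}}\Phi_\psi<a<\inf_{B^-_{R_1}}\Phi_\psi$; the inequality $a<b$ follows because $B^-_{R_1}\cap C\cap B^+_{R_1}$ is nonempty, forcing $\inf_{B^-_{R_1}}\Phi_\psi\leq b$.

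Next I would carry out three successive homotopy/excision reductions, each ultimately justified by Lemma~\ref{homotopy} from the Appendix. In the first step, I would deform $\Phi_\psi^b$ by the straight-line homotopy that radially pushes the $\pi_+$-component into $\overline{\mathbb B^{E_+}_{R_1}}$ and simultaneously pushes the $\pi_-$-component outside $\mathbb B^{E_-}_{R_2}$; this identifies the relative homology with that of the pair $(\Phi_\psi^b\cap\overline{B^+_{R_1}},\; \Phi_\psi^a\cap\overline{B^+_{R_1}}\setminus B^-_{R_2})$. In the second step, I would push $\pi_-$ radially inside $\overline{\mathbb B^{E_-}_{R_2}}$, further reducing to $(\Phi_\psi^b\cap\overline{B^+_{R_1}}\cap\overline{B^-_{R_2}},\; \Phi_\psi^a\cap\overline{B^+_{R_1}}\cap\partial B^-_{R_2})$. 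In the third step, I would apply the deformation $\hat m$ supplied by Definition~\ref{def*}(iii), which acts only on $\pi_0 e$ and hence does not increase $\Phi_\psi$; since $m(1,\cdot)$ maps $E_0$ into $K$, this retracts the pair onto the explicit product $(\overline{\mathbb B^{E_-}_{R_2}}\times K\times\overline{\mathbb B^{E_+}_{R_1}}\times{\cal V},\; \mathbb S^{E_-}_{R_2}\times K\times\overline{\mathbb B^{E_+}_{R_1}}\times{\cal V})$.

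The computation is finished by the K\"unneth formula (Lemma~\ref{kun}): because $\overline{\mathbb B^{E_+}_{R_1}}$ is contractible, $K$ is nonempty, and the pair $(\overline{\mathbb B^{E_-}_{R_2}},\mathbb S^{E_-}_{R_2})$ has one-dimensional relative homology concentrated in degree $r=\dim E_-$ (Lemma~\ref{sph76}), one obtains $\dim H_n(\Phi_\psi^b,\Phi_\psi^a)\geq\dim H_{n-r}({\cal V})$ for every $n\geq r$, whence ${\rm sb}(\Phi_\psi^b,\Phi_\psi^a)\geq{\rm sb}({\cal V})$. The main obstacle will be verifying, in each of the three reductions, that the ad hoc radial deformations (which are not descending gradient flows) meet the hypotheses of Lemma~\ref{homotopy}; the key point will be that along the straight-line homotopies the quadratic part $\tfrac{1}{2}\|\pi_+\|^2-\tfrac{1}{2}\|\pi_-\|^2$ is monotone in the appropriate direction, and the regions being excised at each step already lie below level $a$ thanks to the choices of $R_1, R_2$ and the saddle geometry.
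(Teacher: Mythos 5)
Your proposal takes essentially the same route as the paper's proof: the same choice of levels $a<b$ exploiting the saddle geometry, the same three successive deformation reductions justified by Lemma~\ref{homotopy} (first to $\Phi^b\cap\overline{B^+_{R_1}}$, then to the intersection with $\overline{B^-_{R_2}}$, then via $\hat m$ to the explicit product pair), and the same concluding K\"unneth computation with Lemma~\ref{sph76} and Lemma~\ref{lemmmf}. The argument is correct and matches the paper's structure step for step.
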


\section{From the class $\mathscr A^+$ to the class $\mathscr A^*$}\label{sec8}

Going back to the general framework established in Section \ref{sec5}, we fix some (possibly infinite-dimensional) Hilbert space $E=E_-\oplus E_0\oplus E_+$, some compact finite-dimensional manifold $\mathcal V$, and some functional $\psi:\mathcal M=E\times\mathcal V\to\mathbb R$ in the class $\mathcal A^+$. Through this section we shall further assume that 
$\Phi_{{\psi}}$ has finitely many critical points, as otherwise there is nothing to prove. To simplify the notation we shall write $F:=F_-\oplus E_0\oplus F_+$ and $\mathcal M^*:=F\times\mathcal V$. 
The goal of this section is to show the following:
\begin{proposition}\label{prop7}
	{There are finite-dimensional subspaces $F_\pm\subseteq E_\pm$ and a functional $\psi^*:\mathcal M^*\to\mathbb R$ in the class $\mathcal A^*$ such that  $\Phi_{\psi}:\mathcal M\to\mathbb R$ and $\Phi_{\psi^*}:\mathcal M^*\to\mathbb R$ have the same number of critical points.}
\end{proposition}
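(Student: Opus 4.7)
\noindent The plan is to proceed in two stages: (a) a Galerkin-type saddle-point reduction of $\psi$ to a finite-dimensional setting that preserves the critical-point structure, and (b) an auxiliary modification of the reduced functional to install the structural requirements of Definition \ref{def*}.

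\emph{Stage (a): reduction to finite dimensions.} Since $L=\pi_+-\pi_-$ by (\ref{L}), any critical point $(e,v)$ of $\Phi_\psi$ satisfies $\pi_\pm e=\mp\pi_\pm\nabla_E\psi(e,v)$. The global compactness of $\nabla_E\psi$ (assumption $\bm\psi_2$) then forces the $\pi_\pm$-projections of all critical points to lie in a compact subset of $E_\pm$, while the finiteness of the critical set bounds their $\pi_0$-projections. Choose finite-dimensional subspaces $F_\pm\subseteq E_\pm$ containing all these projections, large enough that, with $\pi_F$ the orthogonal projection onto $F:=F_-\oplus E_0\oplus F_+$, $\|(I-\pi_F)\nabla_E\psi(e,v)\|$ is uniformly arbitrarily small on $\mathcal M$; this is possible because the image of $\nabla_E\psi$ is relatively compact. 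Set $G_\pm:=F_\pm^\perp\cap E_\pm$ and $G:=G_-\oplus G_+$. We solve the $G$-component of the critical-point equation, namely $\pi_G(Le+\nabla_E\psi(e,v))=0$, for $\pi_G e=g^*(f,v)\in G$ as a $C^1$ function of $(f,v)\in F\times\mathcal V$; this is accomplished by the implicit function theorem locally around each nondegenerate critical point (using $\bm\psi_5$) and extended globally by exploiting the uniform smallness of $\pi_G\nabla_E\psi$ to rule out spurious solutions. The resulting reduced functional
$$
\Phi^{\rm red}(f,v):=\Phi_\psi(f+g^*(f,v),v)=\tfrac{1}{2}\langle Lf,f\rangle+\psi^{\rm red}(f,v)
$$
on $\mathcal M^*:=F\times\mathcal V$ has the same critical points as $\Phi_\psi$ (including Morse indices), and $\psi^{\rm red}$ inherits the class-$\mathscr A^+$ structure from $\psi$ via the uniform boundedness of $g^*$.

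\emph{Stage (b): installing the class $\mathscr A^*$ structure.} Let $R$ come from $\bm\psi_3$ and pick $R_1$ as in (\ref{eu314}) for $\psi^{\rm red}$. Choose a large closed ball $K\subseteq E_0$ containing both $\overline{\mathbb B_R^{E_0}}$ and all $\pi_0$-projections of the critical points. By $\bm\psi_3$, on $(E_0\setminus\overline{\mathbb B_R^{E_0}})\times\mathcal V$ the reduced functional $\psi^{\rm red}$ depends only on $e_0\in E_0$, and by $\bm\psi_1$ together with (\ref{psi}) it satisfies $\psi^{\rm red}<\ell$ and $\psi^{\rm red}(e_0,v)\to\ell$ as $\|e_0\|\to\infty$. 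We modify $\psi^{\rm red}$ outside a neighborhood of the critical set to make it radially non-increasing in $E_0$ outside $K$, obtaining $\psi^*$. A radial retraction $m:[0,1]\times E_0\to E_0$ of $E_0$ onto $K$, combined with $\hat m$ as in (\ref{hh2}), then realizes condition (iii) of Definition \ref{def*}; a small downward correction of $\psi^*$ on $C\cap B^+_{R_1}$ (consistent with the monotonicity just arranged) secures the strict inequality (ii); and (i) is immediate. Since none of these modifications touches a neighborhood of the critical set, $\Phi_{\psi^*}$ and $\Phi^{\rm red}$ have the same critical points, hence the same number as $\Phi_\psi$.

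\emph{Main obstacle.} The delicate point is the global validity of the reduction in Stage (a): while the relative compactness of $\nabla_E\psi$ renders $\pi_G\nabla_E\psi$ uniformly small by enlarging $F_\pm$, the Hessian $\hess_E\psi$ need not itself be small, so the standard Amann contraction-mapping argument is not directly available. Our proposed workaround combines local implicit-function constructions around each nondegenerate critical point with a global argument in the asymptotic region (where $\bm\psi_{3,4}$ and the uniform smallness of $\pi_G\nabla_E\psi$ together prevent extraneous solutions); the patching of these pieces into a single $C^1$ map $g^*:F\times\mathcal V\to G$ whose graph has no spurious critical points of $\Phi^{\rm red}$ is the main technical hurdle.
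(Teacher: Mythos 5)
Your two-stage architecture (finite-dimensional reduction, then modification to the class $\mathscr A^*$) matches the structure of the paper's proof, which passes through Lemmas \ref{lem8} and \ref{lem9}. However, there are substantive gaps in both stages.

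\textbf{Stage (a).} You declare the contraction-mapping (Amann--Conley--Zehnder) reduction ``not directly available'' because $\hess_E\psi$ need not be small, and you propose instead to patch local implicit-function constructions around each critical point with a global argument at infinity; you yourself flag this patching as the ``main technical hurdle.'' This is not a proof. The paper in fact carries out the reduction exactly by the contraction principle: hypothesis {\bf [}${\bm \psi_4}${\bf ]} makes $\nabla_E\psi$ globally Lipschitz in $e$ with some constant $\alpha$; hypothesis {\bf [}${\bm \psi_2}${\bf ]} makes the difference set $\mathfrak K:=\{\nabla_E\psi(e_1,v)-\nabla_E\psi(e_2,v)\}$ relatively compact, so one can pick finite-dimensional $F_\pm$ with $\sup_{k\in\mathfrak K}\dist(k,F)<\tfrac{1}{2\alpha}$ and then (using that $L^{-1}=L$ is non-expansive on $F^\perp$ by (\ref{L})) the map $g\mapsto -L^{-1}({\rm Id}-\pi)\nabla_E\psi(f+g,v)$ is a contraction. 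The auxiliary map $G$ is then provided by the Banach fixed-point theorem, not by local implicit-function patching. Your perceived obstruction does not block this route, and abandoning it leaves your proof incomplete precisely where it matters.

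\textbf{Stage (b).} Two points. First, your verification that $\Phi_{\psi^*}$ and $\Phi^{\rm red}$ have the same critical points is one-sided: ``since none of these modifications touches a neighborhood of the critical set'' rules out losing critical points but says nothing about \emph{new} critical points that the modification may create elsewhere (e.g.\ in the transition region). The paper's Lemma \ref{lem9} devotes a careful four-case analysis (using Lemma \ref{lrt}, (\ref{eu314}), (\ref{tnt}), (\ref{tnt2}), the cutoff $n$, and the largeness of $\lambda$) exactly to exclude such spurious critical points; without an analogous argument the claim is unjustified. Second, the paper's $K$ is not a ball and the deformation $m$ is not radial: one builds a basket function $h$ on $E_0$ from the asymptotic restriction $\rho(e_0)=\psi(e_0,v)$ (which depends only on $e_0$ there by {\bf [}${\bm \psi_3}${\bf ]}), sets $\psi_\lambda:=n(\|e_0\|)\psi+\lambda h(e_0)$, takes $K:=h^{-1}([0,\beta])$, and lets $m$ flow backwards along $\nabla h$ to a level set $\mathcal S'$. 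The radial-ball variant you sketch could perhaps be made to work after the modification, but note also that the required monotonicity in (iii) of Definition \ref{def*} forces $\psi^*$ to be radially \emph{non-decreasing} in $e_0$ (consistent with $\psi<\ell$ and $\psi\to\ell$ from (\ref{psi}) and {\bf [}${\bm \psi_1}${\bf ]}), whereas you write ``radially non-increasing''; as stated, the retraction would then increase $\psi^*$ rather than decrease it.
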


{\em First Step: Approximation by a finite-dimensional problem.}

\medbreak

We start by showing that it suffices to prove Theorem~\ref{absth1}{\bf\em (b)} when the space $E$ is finite-dimensional. With this goal we use a finite-dimensional reduction procedure which goes back to Conley and Zehnder \cite{ConZeh}. 
\begin{lemma}\label{lem8}{There are finite-dimensional subspaces $F_\pm\subseteq E_\pm$ and a $C^1$ map $G:\mathcal M^*\to F^\perp$ such that, letting  
\begin{equation}
\label{ups}\Upsilon:\mathcal M^*\to\mathcal M\,,\  (f,v)\mapsto(f+G(f,v),v)\,,\qquad\ \widehat{\psi}:=\psi\circ\Upsilon+\frac{1}{2}\langle LG,G\rangle:\mathcal M^*\to\mathbb R\,,
\end{equation}
then $\widehat{\psi}\in C^2(\mathcal M^*)$ belongs again to the class $\mathscr A^+$ and $\Upsilon$ establishes a $1:1$ correspondence between  the critical points of $\Phi_{\widehat{\psi}}$ and the critical points of $\Phi_{{\psi}}$.
}
\begin{proof}
In view of assumption {\bf [}${\bm \psi_4}${\bf ]}, the map  $\nabla_E\psi:\MM\to E$ is Lipschitz continuous in its first variable;  we denote by  $\alpha>0$ to an associated Lipschitz constant. By assumption {\bf [}${\bm \psi_2}${\bf ]}, the set $\mathfrak K:=\{\nabla_E\psi(e_1,v)-\nabla_E\psi(e_2,v):e_1,e_2\in E,\ v\in\mathcal V\}\subseteq E$ is relatively compact; therefore, there exist finite-dimensional subspaces $F_\pm\subseteq E_\pm$ such that $\max_{k\in\mathfrak K}\dist(k,F)<\frac{1}{2\alpha}$ (recall that $F:=F_-\oplus F_0\oplus F_+$). In addition, $L^{-1}=L:F^\perp\to F^\perp$ is non-expansive, by (\ref{L}). It follows that, letting $\pi:E\to F$ be the orthogonal projection, the map $$L^{-1}\circ(\rm{Id}_E-\pi)\circ\psi:E\times\mathcal V\to F^\perp$$ is contractive in the first variable, with associated Lipschitz constant $1/2$.

\medbreak

We use the splitting $E=F\oplus F^\perp$ to rewrite the points of $E$ as pairs  $(f,g)$, where $f\in F$ and $g\in F^\perp$. In this way, the critical points of $\Phi_\psi$ correspond to the solutions $(f,g,v)$ of the system
$$
\begin{cases}
Lf+\pi[\nabla_E\psi(f+g,v)]=0\,,\\
g+L^{-1}({\rm Id}-\pi)[\nabla_E\psi(f+g,v)]=0\,,\\
\nabla_{\cal V}\psi(f+g,v)=0\,.
\end{cases}
$$

 The Banach Contraction Theorem ensures that for any given $(f,v)\in F\times\mathcal V$, the middle equation of this system has a unique solution  $g=G(f,v)$ on $F^\perp$. Moreover, $G:F\times\mathcal V\to F^\bot$ is $C^1$-smooth (by the Implicit Function Theorem), and satisfies
 \begin{equation*}
 \lim_{\underset{e_0\in E_0}{\|e_0\|\to\infty}}G(e_0+b,v)=0\,,\qquad \lim_{\underset{e_0\in E_0}{\|e_0\|\to\infty}} G'_F(e_0+b,v)=0\,.
 \end{equation*}
   %The remaining statements on $G$ follow easily from its definition and assumptions {\bf [}${\bm \psi_{2-4}}${\bf ]}.
Let $\Upsilon$ and $\widehat{\psi}$ be defined as in (\ref{ups}) and set $\Phi_{\widehat{\psi}}(f,v):=\half\big\langle Lf,f\big\rangle +\widehat\psi(f,v),\  (f,v)\in\mathcal M^*$. Then, $\Phi_{\widehat{\psi}}=\Phi_{{\psi}}\circ\Upsilon$. Moreover,
$$
\nabla_F\widehat\psi=\pi\circ(\nabla_E\psi)\circ\Upsilon\,,\qquad \nabla_{\mathcal V}\widehat\psi=(\nabla_{\mathcal V}\psi)\circ\Upsilon\,,
$$
so that $\widehat\psi$ is actually $C^2$-smooth and the following hold:
$$
\nabla_{\MM^*}\Phi_{\widehat\psi}=(\nabla_\MM\,\Phi_\psi)\circ\Upsilon,\qquad  \hess_{F}\Phi_{\widehat\psi}(f,v)=\hess_{E}\Phi_\psi\big(\Upsilon(f,v)\big)\circ\Upsilon'(f,v)\,.$$ It is now easy to check that $\widehat\psi$ satisfies assumptions {\bf [}${\bm \psi_{1-5}}${\bf ]} and the critical points of $\Phi_{\widehat\psi}$ are in a one-to-one correspondence (given by $\Upsilon$) with the critical points of $\Phi_\psi$. It completes the proof of Lemma \ref{lem8} and concludes  the discussion of the first step.
\end{proof}
\end{lemma}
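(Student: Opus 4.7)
The approach is a Lyapunov--Schmidt reduction in the spirit of Conley--Zehnder: we split the variable $e\in E$ as $e=f+g$ with $f$ in some auxiliary finite-dimensional space $F\supseteq E_0$ and $g$ in its orthogonal complement $F^\perp\subseteq E_-\oplus E_+$, and exploit the invertibility of $L$ on $F^\perp$ to solve the $F^\perp$-component of the critical-point equation for $g$ as a function $G(f,v)$ of the remaining variables. Since $E_0\subseteq F$, the operator $L|_{F^\perp}$ is an involution by (\ref{L}), so the $F^\perp$-component of $\nabla_E\Phi_\psi=0$ rewrites as a fixed-point equation
\begin{equation*}
g=T_{f,v}(g):=-L(\mathrm{Id}-\pi)\nabla_E\psi(f+g,v),\qquad\pi:E\to F\text{ being the orthogonal projection.}
\end{equation*}

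\textbf{Choice of $F_\pm$ and construction of $G$.} To make $T_{f,v}$ a uniform contraction, observe that by $[\bm\psi_4]$ the map $e\mapsto\nabla_E\psi(e,v)$ is Lipschitz with some constant $\alpha$ independent of $v$, while by $[\bm\psi_2]$ its image (and therefore the set $\mathfrak K$ of all its pairwise differences) is relatively compact in $E$. Hence one can pick finite-dimensional $F_\pm\subseteq E_\pm$ such that $F:=F_-\oplus E_0\oplus F_+$ satisfies $\sup_{k\in\mathfrak K}\dist(k,F)<1/(2\alpha)$, producing contraction ratio $1/2$ uniformly in $(f,v)$. The Banach fixed-point theorem with parameters, combined with the implicit function theorem applied on $F^\perp$, yields a $C^1$ map $G:\mathcal M^*\to F^\perp$ solving the $F^\perp$-equation. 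Propagating (\ref{lim2}) and (\ref{lim3}) through the contraction estimate then gives the auxiliary limits $G(e_0+b,v)\to 0$ and $G'(e_0+b,v)\to 0$ as $\|e_0\|\to\infty$ with $e_0\in E_0$, uniformly in bounded $b\in E$ and $v\in\mathcal V$.

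\textbf{Verification and main obstacle.} With $\Upsilon$ and $\widehat\psi$ defined by (\ref{ups}), one checks directly that $\Phi_{\widehat\psi}=\Phi_\psi\circ\Upsilon$. Differentiating and using the defining equation of $G$ to cancel the $F^\perp$-components produces the clean formulas
\begin{equation*}
\nabla_F\widehat\psi=\pi\circ(\nabla_E\psi)\circ\Upsilon,\qquad \nabla_{\mathcal V}\widehat\psi=(\nabla_{\mathcal V}\psi)\circ\Upsilon,
\end{equation*}
which imply both the $C^2$-regularity of $\widehat\psi$ (only continuity of $\hess_E\psi$ is needed after reduction, and this is provided by $[\bm\psi_4]$) and the bijection $\Upsilon$ between critical points of $\Phi_{\widehat\psi}$ on $\mathcal M^*$ and of $\Phi_\psi$ on $\mathcal M$, since the $F^\perp$-component of $\nabla_E\Phi_\psi=0$ is built into the definition of $G$. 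The main technical obstacle is verifying that $\widehat\psi$ inherits every condition $[\bm\psi_1]$--$[\bm\psi_5]$ of the class $\mathscr A^+$: concretely, one must track how the uniform limits of $\psi$, $\nabla_E\psi$ and $\hess_E\psi$ as $\|e_0\|\to\infty$ pass through the composition with $\Upsilon$, which is precisely what the auxiliary limit estimates on $G$ and $G'$ are designed to handle. Property $[\bm\psi_3]$ for $\widehat\psi$ requires an additional argument but follows from the same limits together with the orthogonality built into $\pi$, while nondegeneracy $[\bm\psi_5]$ transfers from the identity $\hess_F\Phi_{\widehat\psi}(f,v)=\hess_E\Phi_\psi(\Upsilon(f,v))\circ\Upsilon'(f,v)$ evaluated on the tangent space of $\mathcal M^*$.
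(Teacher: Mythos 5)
Your proposal is correct and follows essentially the same route as the paper: the same Conley--Zehnder finite-dimensional reduction, the same choice of $F_\pm$ via the Lipschitz constant from {\bf [}${\bm \psi_4}${\bf ]} and the compactness from {\bf [}${\bm \psi_2}${\bf ]} to get a uniform $1/2$-contraction, the same construction of $G$ by the Banach fixed point and implicit function theorems, and the same verification via $\Phi_{\widehat\psi}=\Phi_\psi\circ\Upsilon$ and the gradient/Hessian identities. Your rewriting of the $F^\perp$-equation with $L$ in place of $L^{-1}$ is harmless, since by (\ref{L}) the operator $L$ is an involution on $F^\perp\subseteq E_-\oplus E_+$.
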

\bigbreak

{\em Second Step: Intensifying the saddle geometry.}

\medbreak

In view of Lemma \ref{lem8} from now on we shall assume, without loss of generality, that $E=F$ is finite-dimensional. We write $\tilde E:=E_-\oplus E_+$ and use the splitting $E=E_0\oplus\tilde E$ to rewrite the points of $E$ in the form $e=e_0+\tilde e$, where $e_0=\pi_0(e)\in E_0$ and $\tilde e=\pi_-(e)+\pi_+(e)\in\tilde E$.
\begin{lemma}\label{lrt}
	{There exists some $R>0$ such that
$\nabla_{\widetilde E}\Phi_\psi(e_0+\tilde e,v)\not=0$ if $\|e_0\|\ge R\text{ and }\tilde e\not=0\,.$	
}\begin{proof}
Choose $R>0$ as in assumption {\bf [}${\bm \psi_{3}}${\bf ]}. After possibly replacing $R$ by a bigger number we may further assume that
\begin{equation}
\label{tnt0}\|\hess_E\psi(e_0+\tilde e,v)\|<\half\,,\;\text{ if }e_0\in E_0\backslash\mathbb B^{E_0}_R\text{ and }\tilde e\in\overline{\mathbb B^{\tilde E}_{2R_1}}\,,
\end{equation}
the constant $R_1>0$ being given as in (\ref{eu314}).
By (\ref{L}), $\|L\tilde e\|=\|\tilde e\|$ for any $\tilde e\in\tilde E$, and by {\bf [}${\bm \psi_{3}}${\bf ]}, $\nabla_{\widetilde E}\psi(e_0,v)=0$ whenever $e_0\in E_0\backslash\mathbb  B^{E_0}_R$. The triangle inequality gives
	\begin{equation}\label{lp}
\|\nabla_{\widetilde E}\Phi_\psi(e_0+\tilde e,v)\|\geq\|\tilde e\|-\|\nabla_{\widetilde E}\psi(e_0+\tilde e,v)-\nabla_{\widetilde E}\psi(e_0,v)\|\,.
	\end{equation}

	In view of (\ref{tnt0}), the map $\tilde e\mapsto\nabla_{\widetilde E}\psi(e_0+\tilde e,v)$ is contractive on $\overline{\B_{2R_1}^{\tilde E}}$, and~(\ref{lp}) implies that
	$
	\nabla_{\widetilde E}\Phi_\psi(e_0+\tilde e,v)\not=0$ provided that $\|e_0\|\ge R$ and $0<\|\tilde e\|\le 2R_1$. In combination with~(\ref{eu314}) this yields the result.
\end{proof}
\end{lemma}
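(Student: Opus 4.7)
The plan is to bound $\|\nabla_{\widetilde E}\Phi_\psi(e_0+\tilde e,v)\|$ from below by splitting the argument according to the size of $\tilde e$, matching the threshold radius $R_1$ that appears in (\ref{eu314}). The starting identity is
\[
\nabla_{\widetilde E}\Phi_\psi(e_0+\tilde e,v)=L\tilde e+\nabla_{\widetilde E}\psi(e_0+\tilde e,v),
\]
together with the crucial norm equality $\|L\tilde e\|=\|\tilde e\|$, which follows from (\ref{L}) because $L$ acts as $\pm\mathrm{Id}$ on the orthogonal summands $E_\pm$.

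For the small regime $\|\tilde e\|\le 2R_1$, I would exploit both assumptions $[\psi_3]$ and $[\psi_4]$. The former yields $\nabla_{\widetilde E}\psi(e_0,v)=0$ once $\|e_0\|\ge R$, while the uniform decay of $\hess_E\psi$ in the latter permits me to enlarge $R$ so that $\|\hess_E\psi(e_0+\tilde e,v)\|<1/2$ holds uniformly for $\|e_0\|\ge R$, $\|\tilde e\|\le 2R_1$ and $v\in\mathcal V$. The mean value theorem then gives
\[
\|\nabla_{\widetilde E}\psi(e_0+\tilde e,v)-\nabla_{\widetilde E}\psi(e_0,v)\|\le\half\|\tilde e\|,
\]
so the triangle inequality combined with $\|L\tilde e\|=\|\tilde e\|$ forces $\|\nabla_{\widetilde E}\Phi_\psi(e_0+\tilde e,v)\|\ge\half\|\tilde e\|$, which is strictly positive for any $\tilde e\ne 0$ in this range.

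For the complementary range $\|\tilde e\|>2R_1$, I would invoke (\ref{eu314}) directly. Since $\|\pi_-(\tilde e)\|^2+\|\pi_+(\tilde e)\|^2=\|\tilde e\|^2>4R_1^2$, at least one of $\|\pi_-(\tilde e)\|$, $\|\pi_+(\tilde e)\|$ exceeds $R_1$. If $\|\pi_-(\tilde e)\|>R_1$, the first line of (\ref{eu314}) applied to $e=e_0+\tilde e$ (observing that $\pi_-(e)=\pi_-(\tilde e)\in\widetilde E$ kills any $E_0$-contribution) gives $\langle\nabla_{\widetilde E}\Phi_\psi(e_0+\tilde e,v),\pi_-(\tilde e)\rangle<0$, and hence $\nabla_{\widetilde E}\Phi_\psi(e_0+\tilde e,v)\ne 0$. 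The case $\|\pi_+(\tilde e)\|>R_1$ is symmetric, using the second line of (\ref{eu314}). I do not anticipate any serious obstacle: the lemma is essentially a bookkeeping combination of the three ingredients $[\psi_3]$, $[\psi_4]$ and (\ref{eu314}), each tailored to a complementary regime. The only delicate point is the quantitative matching of the two thresholds, and the factor-of-two margin between $2R_1$ and $R_1$ is precisely what guarantees no gap between the small-$\tilde e$ contractive estimate and the large-$\tilde e$ saddle estimate.
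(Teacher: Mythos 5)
Your proof is correct and takes essentially the same approach as the paper: enlarge $R$ so that the Hessian bound makes $\tilde e\mapsto\nabla_{\widetilde E}\psi(e_0+\tilde e,v)$ a $\tfrac12$-contraction on $\overline{\mathbb B^{\tilde E}_{2R_1}}$, combine this with $\|L\tilde e\|=\|\tilde e\|$ and $[\psi_3]$ to handle $0<\|\tilde e\|\le 2R_1$, and invoke (\ref{eu314}) for $\|\tilde e\|>2R_1$. You merely spell out the final "in combination with (\ref{eu314})" step in more detail than the paper does.
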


\begin{lemma}\label{lem9}
{There exists a functional $\psi^*:\MM\to\R$ in the class ${\mathscr A^*}$ such that  $\Phi_\psi$ and $\Phi_{\psi^*}$ have the same critical points.}
\end{lemma}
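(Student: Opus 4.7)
Since $\Phi_\psi$ has only finitely many critical points, I would first pick $R_0 > R$ (with $R$ as in Lemma \ref{lrt}) large enough that $\|\pi_0(e^*)\| < R_0$ for every critical point $(e^*,v^*)$ of $\Phi_\psi$, and set $K := \overline{\mathbb B^{E_0}_{R_0}}$; this is the candidate compact set of Definition \ref{def*}. The natural deformation $m:[0,1]\times E_0\to E_0$ is the radial retraction onto $K$: the identity on $K$, and $m(t,e_0):=(1-t)e_0 + t R_0 e_0/\|e_0\|$ for $\|e_0\|>R_0$. By the choice of $R_0$ and Lemma \ref{lrt}, the full gradient $\nabla\Phi_\psi$ is nonzero outside $K\times\mathcal V$ (modulo the obvious product); moreover, if $\|e_0\|\geq R$ and $\widetilde e\neq 0$, Lemma \ref{lrt} already excludes critical points.

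The construction of $\psi^*$ I have in mind is to set $\psi^*(e,v):=\psi(e,v)+\mu(\pi_0(e))$, where $\mu:E_0\to[0,\lambda]$ is a $C^2$ radial non-decreasing function with $\mu\equiv 0$ on $K$ (together with all its derivatives), strictly increasing in $\|e_0\|$ outside $K$, with $\mu\to\lambda$ and $\nabla\mu,\hess\,\mu\to 0$ at infinity. The constant $\lambda>0$ would be chosen (a) larger than $R_1^2/2$, and (b) large enough so that the outward radial gradient $\mu'(\|e_0\|)$ strictly dominates the $E_0$-component of $\nabla_E\psi$ on $E_0\setminus K$. With this choice, the limit of $\psi^*$ at infinity becomes $\ell^*:=\ell+\lambda$, the normalization $\psi^*<\ell^*$ still holds, and $\psi^*$ inherits from $\psi$ and $\mu$ the full list of properties of the class $\mathscr A^+$. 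Nondegeneracy survives because $\mu$ and its derivatives vanish at every point of the critical set. The critical points of $\Phi_{\psi^*}$ coincide with those of $\Phi_\psi$: inside $K$ the modification vanishes; outside $K$, the dominance condition together with Lemma \ref{lrt} prevents $\nabla_{E_0}\Phi_{\psi^*}$ from vanishing in the only region where a critical point could lurk.

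To verify the three clauses of Definition \ref{def*}, clause (i) is immediate from the above. Clause (ii) follows because on $C\cap B^+_{R_1}=(E_-+K+\overline{\mathbb B^{E_+}_{R_1}})\times\mathcal V$ one has $\mu\equiv 0$, so $\Phi_{\psi^*}=\Phi_\psi$ there, and the supremum of $\Phi_\psi$ on this set is bounded by $R_1^2/2+\ell$, strictly less than $\ell^*=\ell+\lambda$ once $\lambda>R_1^2/2$. Clause (iii) is realized by the radial retraction $m$ described above: along a trajectory $t\mapsto\hat m(t,e,v)$, the contribution of $\mu$ to $\frac{d}{dt}\psi^*$ is $-\mu'(\|m(t,e_0)\|)(\|e_0\|-R_0)$, which is strictly negative and dominates the corresponding contribution of $\psi$ (which is at most $\|\nabla_{E_0}\psi\|\cdot(\|e_0\|-R_0)$) provided the dominance condition on $\mu'$ is met.

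\textbf{The main obstacle} is the construction of such a $\mu$. Although $[{\bm\psi_2}]$ gives $\nabla_E\psi\to 0$ as $\|e_0\|\to\infty$, this is only uniform in \emph{bounded} $\widetilde e$ and $v$; yet clause (iii) must be satisfied for \emph{all} $(e,v)\in\mathcal M$, including unbounded $\widetilde e$. Since $\nabla_E\psi$ is only globally bounded and $\mu$ must remain bounded (so that $\psi^*$ have a finite limit at infinity, as required by $[{\bm\psi_1}]$), the pointwise dominance $\mu'(\|e_0\|)>\|\nabla_{E_0}\psi\|$ is impossible to ensure uniformly in $\|e_0\|$ by a na\"{\i}ve choice. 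I expect the resolution will require exploiting the decay of $\hess_E\psi$ from $[{\bm\psi_4}]$ to improve the control on $\nabla_{E_0}\psi$ along radial rays, or more likely a surgery-type argument that simply replaces $\psi$ outside a very large ball in $E_0$ (where $\psi$ is already very close to $\ell$ by $[{\bm\psi_1}]$) by a purely radial function, with a careful smooth interpolation in a transition annulus designed so that no critical points are created there.
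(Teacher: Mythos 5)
Your proposal correctly identifies the central difficulty and honestly flags it, but as written it does not constitute a proof: the dominance condition $\mu'(\|e_0\|)>\|\nabla_{E_0}\psi\|$ that would guarantee both clause \emph{(iii)} and the absence of spurious critical points in $(E_0\setminus K)+\tilde E$ is incompatible with the boundedness of $\mu$, exactly as you say. The ``surgery-type argument'' you gesture at in your last sentence is in fact what the paper does, and there are two concrete ideas in it that your sketch is missing.

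First, the paper does not merely add a bounded bump to $\psi$; it multiplies $\psi$ by a radial cut-off $n(\|e_0\|)$ which vanishes for $\|e_0\|\ge R'+1$, so that on the far region the functional is \emph{purely} $\lambda h(e_0)$, with no $\psi$-contribution left to dominate. The only region where one must worry about new critical points is the transition annulus $R'\le\|e_0\|\le R'+1$, and there $\nabla h$ is bounded away from $0$, so a large $\lambda$ suffices; together with Lemma \ref{lrt} (for $\tilde e\ne 0$) this closes all the cases.

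Second, and more importantly, the auxiliary function $h$ is not a radial bump. Because $[{\bm\psi_3}]$ forces $\nabla_{\MM}\psi(e_0,v)\in E_0$ for $\|e_0\|\ge R$, the restriction $\rho(e_0):=\psi(e_0,v)$ is $v$-independent there, and the paper builds $h=\omega\circ\rho$ so that $h$ is a basket function whose level sets are aligned with those of $\rho$. The compact set $K$ is then a sublevel set $\overline{{\rm int}\,\mathcal S'}$ of $h$ (not a ball), and the deformation $m$ follows the backwards flow of $\nabla h$ (not the radial retraction). This alignment is what makes the decrease condition in clause \emph{(iii)} automatic: wherever $m$ moves points, the new $\psi^*$ equals $\lambda h$, which strictly decreases along $-\nabla h$. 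With the radial $K$ and radial $m$ of your sketch, clause \emph{(iii)} would require controlling the radial derivative of $\psi$ by $\mu'$, and you have already shown you cannot.

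So the gap is real: you need to replace the ``add a radial $\mu$'' idea by ``cut off $\psi$ radially and glue in $\lambda$ times a basket function built from the level sets of $\psi|_{E_0}$'', with $K$ and $m$ adapted to $h$ rather than to the metric balls of $E_0$.
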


\begin{proof}
Choose $R>0$ large enough so that it satisfies both Lemma \ref{lrt} and assumption {\bf [}${\bm \psi_{3}}${\bf ]}. Since we further assumed (at the beginning of this section) that the number of critical points of $\Phi_{{\psi}}$ is finite, after possibly replacing $R$ by some bigger constant, we will have
\begin{equation}\label{tnt}
0\not=\nabla_{\MM}\Phi_\psi(e_0,v)=\nabla_{\MM}\psi(e_0,v)\in E_0\text{ for any }e_0\in E_0\backslash\mathbb B_{R}^{E_0}\,.
\end{equation}

In particular, $\rho(e_0):=\psi(e_0,v)$ does not depend on $v\in\mathcal V$ if $\|e_0\|\geq R$. The function $\rho:E_0\setminus\B_R^{E_0}\to\R$ defined in this way is $C^2$-smooth and, by {\bf [}${\bm \psi_{1,2,4}}${\bf ]} and~(\ref{psi}),(\ref{tnt}),  it satisfies
\begin{equation}\label{tnt2}
\begin{cases}
\;\;\rho(e_0)<\ell,\quad \nabla\rho(e_0)\not=0\,,\ \ \text{ for any }e_0\in E_0\setminus\B^{E_0}_R\,,\\
\displaystyle\lim_{\|e_0\|\to\infty}\rho(e_0)=\ell\,,\; \lim_{\|e_0\|\to\infty}\nabla \rho(e_0)=0\,, \; \lim_{\|e_0\|\to\infty}\hess\, \rho(e_0)=0\,.
\end{cases}
\end{equation}

Choose now some number $\alpha\in]\max_{\mathbb S_R^{E_0}}\rho,\ell[$ and some $C^2$ function $\omega:\mathbb R\to\mathbb R$ with $\omega(r)=0$ if $r\leq\alpha$, $\omega'(r)>0$ if $r>\alpha$, and  $\omega(\ell)=1$. Then, the function $h:E_0\to\mathbb R$ defined by
$$h(e_0):=\begin{cases}
0&\text{if }\|e_0\|< R\\
\omega(\rho(e_0))&\text{if }\|e_0\|\geq R
\end{cases}\,,$$
is a basket function (in the sense of Definition \ref{bf}) for the compact hypersurface $\mathcal S:=h^{-1}(\alpha)$.
Pick now some number $R'>\max_{e_0\in\mathcal S}\|e_0\|$, and some $C^2$-smooth function $n:\R\to\R$ with 
$$
 n(\rho)=1\,,\;\text{ if }\rho\leq R'\,,\qquad n'(\rho)<0\,,\;\text{ if }R'<\rho<R'+1\,,\qquad
  n(\rho)=0\,,\;\text{ if }\rho\ge R'+1\,,
 $$
and define, for $\lambda>0$,
$$
\psi_\lambda:\mathcal M\to\mathbb R\,,\qquad (e_0+\tilde e,v)\mapsto n(\|e_0\|)\,\psi(e_0+\tilde e,v)+\lambda h(e_0) 
\,.
$$

We claim that, for $\lambda>0$ large enough, $\Phi_{{\psi}}$ and $\Phi_{{\lambda}}:=\Phi_{\psi_\lambda}$ have the same critical points. Indeed, 
\begin{itemize}
\item {\em $\Phi_{{\psi}}$ and $\Phi_{{\lambda}}$ coincide on $(\mathbb B_R^{E_0}+\tilde E)\times\mathcal V$.} This is clear from the definitions.
\item {\em No critical point of $\Phi_{{\psi}}$ belongs to $((E_0\backslash\mathbb B_R^{E_0})+\tilde E)\times\mathcal V$.} This statement follows from the combination of (\ref{tnt}) and Lemma \ref{lrt}.
\item {\em If $\lambda>0$ is large enough, no critical point of $\Phi_{{\lambda}}$ belongs to $((E_0\backslash\mathbb B_R^{E_0})+\tilde E)\times\mathcal V$.} In fact, one checks the inequality $\nabla_{E}\Phi_{\lambda}(e_0+\tilde e,v)\not=0$ by considering the following four possible cases: {\em (a):} if $R\leq\|e_0\|\leq R'$ and $\tilde e=0$, by using (\ref{tnt2}); {\em (b):} if $R\leq\|e_0\|\leq R'$ and $\tilde e\not=0$, by Lemma \ref{lrt};  {\em (c):} if $R'\leq\|e_0\|\leq R'+1$ and $\lambda>0$ is large enough, by the fact that $\nabla h$ is bounded away from zero in this set; {\em (d):} if $\|e_0\|\geq R'+1$, by (\ref{tnt2}).   

\end{itemize}

To conclude, we claim that, for $\lambda>0$ large enough, the functional $\psi_{\lambda}$ belongs to the class $\mathscr A^*$. This is now easy to check, both in what concerns to membership of the class $\mathscr A^+$ and requirements {\em (i)-(ii)} in Definition \ref{def*}. To check {\em (iii)} pick some number $\beta\in\,]\max_{\|e_0\|\leq R'+1}h,1[$ and consider the compact set  $K:=h^{-1}([0,\beta])=\overline{{\rm int}\,\mathcal S'}$, where  $\mathcal S':=h^{-1}(\beta)$.  Now, the deformation $m$ of $E_0$ can be built as follows:
we keep all the points of $K$ fixed, while, if $e_0\in E_0\setminus K$, then $m(\cdot,e_0)$ is the curve, starting from $m(0,e_0)=e_0$, which follows backwards the flow lines of $\nabla h$, and arrives at the point $m(1,e_0)$ where the flow first meets $\mathcal S'$. (Notice that this flow is transversal to $\mathcal S'$.) It completes the proof of Lemma \ref{lem9}.
\end{proof}

We end the argumentation by observing that the combination of Lemmas \ref{lem8} and \ref{lem9} implies Proposition \ref{prop7}. In view of Proposition \ref{key22}, part {\bf\em (b)} of Theorem \ref{absth1} follows.  
\end{section}

\begin{section}{Appendix: A review on relative category and relative Morse theory}\label{appendix}
	
	Relative category and relative Morse theory were developed to study existence and multiplicity of critical points of functionals which are unbounded from below.
	In this Appendix we collect some basic elements from these theories which were used in Sections \ref{sec6} and \ref{sec7}. We do not claim any originality in this section, which is  well-known to the specialists.
	
	\medbreak 
	\begin{subsection}{Relative category}\label{Ap1}
	It will be convenient to take a more general point of view and work on an arbitrary metric space $M$. Let $A,N\subseteq M$ be closed; by a {\em deformation of $A$ in $M$} we mean a continuous map $\eta:[0,1]\times A\to M$ with $\eta(0,a)=a$ for any $a\in A$. The set  $A$ is {\em contractible in $M$} provided that there exists a deformation $\eta$ of $A$ in $M$ such that $\eta(1,A)=\{p\}$ is a singleton. We shall say that $A$ is of category $k\ge 0$ relative to $N$, denoted ${\rm cat}_{M,N}(A)=k$, provided that $k$ is the smallest integer such that 
	\begin{equation}\label{cates}
	\text{$A=A_0\cup A_1\cup\ldots\cup A_k$,\qquad\ \  $A_0,\ldots,A_k$ closed and $A_1,\ldots, A_k$ contractible in $M$,}
	\end{equation}
	and there exists a deformation $\eta_0$ of $A\cup N$ in $M$ satisfying 
	\begin{equation*}
	\eta_0(\{1\}\times A_0)\subseteq N\,,\;\text{ and }\;\; \eta_0([0,1]\times N)\subseteq N\,.
	\end{equation*} 
	(If no such a $k$ exists, $\cat_{M,{N}}(A):=+\infty$). 
	
	\medbreak
	
	Let the closed sets $N_2\subseteq N_1\subseteq M$ be given. We say that $N_1$ is a {\em retract} of $M$ provided that there exists a continuous map $r:M\to N_1$ with $r(n)=n$ for $n\in N_1$. We say that $N_2$ is a {\em strong deformation retract of $N_1$ relative to $M$} provided that there exists a deformation $h$ of $M$ satisfying
	\begin{equation}\label{hh} h([0,1]\times N_1)\subseteq N_1\,,\qquad h(\{1\}\times N_1)\subseteq N_2\,,\;\text{ and }\;\;  h(t,n_2)=n_2\ \forall (t,n_2)\in[0,1]\times N_2\,.
	\end{equation} 

	For instance, the closed unit ball $\overline{\mathbb B^m}$ is a retract of $\mathbb R^m$, and the sphere $\mathbb S^{m-1}$ is a strong deformation retract of the shell $\overline{\mathbb B^m}\backslash\mathbb B_{1/2}^m$ relative to $\mathbb R^m$. However, $\mathbb S^{m-1}$ is not a retract of  $\overline{\mathbb B^m}$. 
	
	\begin{lemma}\label{leem2}
		{The following hold:
			\begin{enumerate}
				\item[(i)] If $N_1$ is a retract of $M$, then $\cat_{N_1,N_2}(N_1)\leq\cat_{M,N_2}(N_1)$.
				\item[(ii)] Assume (\ref{cates}), that $N_2$ is a strong deformation retract of $N_1$ relative to $M$, and the existence of a deformation $\eta$ of $A_0\cup N_2$ in $M$ with 
				\begin{equation}
				\label{eta}	\eta([0,1]\times N_2)\subseteq N_1\,,\;\text{ and }\;\; \eta(\{1\}\times A_0)\subseteq N_1.
				\end{equation} Then, $\cat_{M,N_2}(A)\leq k$.
				\item[(iii)] Let $\mathcal V$ be a finite-dimensional, compact, connected  manifold without boundary, and let $m\ge 1$ be an integer. Then, 
				$
				\cat_{{\mathbb R^m}\times\mathcal V,\mathbb S^{m-1}\times\mathcal V}(\overline{\mathbb B^m}\times\mathcal V)\ge\cl(\mathcal V)+1\,.
				$
				
		\end{enumerate}  }
		\begin{proof}
			Part {\em (i)} follows easily from the definitions. In order to check {\em (ii)} choose $h$ and $\eta$ as in (\ref{hh}) and (\ref{eta}), and denote by 
			$$d:M\to [0,1]\,,\qquad d(m):=\min(\dist(m,N_2),1)\,,$$
			to the distance function to $N_2$, with a maximum of $1$. Let $h$ be a deformation of $M$ with (\ref{hh}) and let $\alpha:[0,1]\times M\to M$ be the deformation of $M$ defined by
			$$\alpha(t,m):=\begin{cases}
			m, &\text{if }m\in N_2\,,\\
			h\left(\frac{t}{d(m)}\right),&\text{if }m\notin N_2\text{ and }0\leq t\leq d(m)\,,\\
			h(1,m),&\text{if }d(m)\leq t\leq 1\,.
			\end{cases}$$
			We use Tietze's extension theorem to find some continuous function $\delta:[0,1]\times M\to [0,1]$ satisfying
			$$\delta(t,m)=\begin{cases}
			0&\text{if }t=0\,,\\
			d(\eta(t,m))&\text{if }(t,m)\in\{1\}\times A_0\text{ or }m\in N_2\,,
			\end{cases}$$
			and define $\eta_*:[0,1]\times(A_0\cup N_2)\to M$ by
			$\eta_*(t,m):=\alpha(\delta(t,m),\eta(t,m))$. One easily checks that this is a deformation of $A_0\cup N_2$ into $M$ carrying $\{1\}\times A_0$ and $[0,1]\times N_2$ inside $N_2$, proving the result.
			
			\medbreak
			
			Finally, in order to obtain {\em (iii)} we observe that $\overline{\mathbb B^m}\times\mathcal V$ is a retract of ${\mathbb R^m}\times\mathcal V$, and combine {\em (i)} with ~\cite[Proposition 2.6 and Lemma 3.7]{Szu1} to find
			$$
			\cat_{{\mathbb R^m}\times\mathcal V,\mathbb S^{m-1}\times\mathcal V}(\overline{\mathbb B^m}\times\mathcal V)\ge\cat_{\overline{\mathbb B^m}\times\mathcal V,\mathbb S^{m-1}\times\mathcal V}(\overline{\mathbb B^m}\times\mathcal V)\ge\cl(\mathcal V)+1\,.
			$$
		\end{proof}
		\end{lemma}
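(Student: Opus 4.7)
My plan is to handle the three parts in order, using the retraction structure in parts \emph{(i)} and \emph{(iii)} and a delicate gluing argument in \emph{(ii)}.

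For \emph{(i)}, I would fix a minimal decomposition witnessing $\cat_{M,N_2}(N_1) = k$, namely a cover $N_1 = A_0 \cup A_1 \cup \cdots \cup A_k$ with contractions $\gamma_j$ of $A_j$ in $M$ (for $j \ge 1$) and a deformation $\eta_0$ of $N_1 \cup N_2$ in $M$ satisfying the prescribed boundary conditions, and then compose everything with the retraction $r : M \to N_1$. Because $r$ takes values in $N_1$, the composed contractions $r \circ \gamma_j$ are contractions of $A_j$ inside $N_1$; and because $r$ fixes $N_1 \supseteq N_2$ pointwise, $r(\eta_0(\{1\} \times A_0)) \subseteq N_2$ and $r(\eta_0([0,1] \times N_2)) \subseteq N_2$. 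This yields a decomposition certifying $\cat_{N_1,N_2}(N_1) \le k$.

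Part \emph{(ii)} is the technical heart of the lemma. Starting from the covering $A = A_0 \cup \cdots \cup A_k$ together with the given deformation $\eta$ of $A_0 \cup N_2$ whose image only reaches $N_1$, my plan is to upgrade $\eta$ so that its image actually lies in $N_2$ at time $t = 1$ on $A_0$, while keeping $N_2$ invariant throughout. The natural move is to concatenate $\eta$ with the strong deformation retraction $h$ of $N_1$ onto $N_2$. The obstacle is that $\eta$ may push points of $N_2$ temporarily into $N_1 \setminus N_2$, so a naive concatenation would break the invariance of $N_2$. To circumvent this, I would run the retraction $h$ at a variable speed that vanishes on $N_2$ and grows with the distance to $N_2$: using $d(m) := \min(\mathrm{dist}(m,N_2),1)$, set $\alpha(t,m) := h(t/d(m),m)$ for $t \le d(m)$ and $\alpha(t,m) := h(1,m)$ for $t \ge d(m)$, with $\alpha(t,m) := m$ whenever $m \in N_2$. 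The upgraded deformation is then $\eta_*(t,m) := \alpha(\delta(t,m),\eta(t,m))$, where $\delta:[0,1] \times (A_0 \cup N_2) \to [0,1]$ is a continuous interpolation satisfying $\delta(0,\cdot) = 0$, $\delta(1,m) = d(\eta(1,m))$ on $A_0$, and $\delta(t,m) = d(\eta(t,m))$ on $[0,1] \times N_2$. The existence of such a $\delta$ follows from Tietze's extension theorem applied on the closed subset of $[0,1] \times (A_0 \cup N_2)$ where $\delta$ is prescribed; the main obstacle is to verify that these prescriptions are mutually consistent on overlaps (which they are, since $\eta$ preserves $N_2 \cup N_2$ at time $0$ and the formulas agree there) and that the resulting $\eta_*$ satisfies the three required properties.

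For \emph{(iii)}, the space $N_1 := \overline{\mathbb B^m} \times \mathcal V$ is a retract of $\mathbb R^m \times \mathcal V$ via the radial projection onto $\overline{\mathbb B^m}$ on the first factor combined with the identity on $\mathcal V$. Applying \emph{(i)} reduces the problem to the intrinsic inequality $\cat_{\overline{\mathbb B^m} \times \mathcal V,\, \mathbb S^{m-1} \times \mathcal V}(\overline{\mathbb B^m} \times \mathcal V) \ge \cl(\mathcal V) + 1$, and this bound is established by Szulkin's cup-length argument in \cite[Proposition 2.6 and Lemma 3.7]{Szu1}, which I would simply cite.
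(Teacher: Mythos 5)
Your proposal is correct and follows essentially the same route as the paper's proof: part \emph{(i)} by composing with the retraction, part \emph{(ii)} by running the strong deformation retraction $h$ at a speed proportional to $d(\cdot)=\min(\dist(\cdot,N_2),1)$ and gluing via a Tietze-extended time-reparametrization $\delta$, and part \emph{(iii)} by reducing to $\overline{\mathbb B^m}\times\mathcal V$ via \emph{(i)} and invoking Szulkin. The only cosmetic difference is the domain chosen for $\delta$ ($[0,1]\times(A_0\cup N_2)$ versus the paper's $[0,1]\times M$), which is immaterial.
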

\end{subsection}

\begin{subsection}{Relative homology and Morse theory}

Given a metric space $M$ and a subset $A\subseteq M$  we denote by  $H_*(M,A)=\{H_n(M,A)\}_{n\ge 0}$ to the associated sequence of relative homology groups with real coefficients.  Thus, $H_n(M,A)$ is, for each index $n$, a real vector space whose elements are equivalence classes of singular chains having zero boundary. Moreover, this is done in such a way that $H_*(M,\emptyset)=H_*(M)$. 

\medbreak

We shall write $f:(M,A)\to(N,B)$ meaning that $A\subseteq M$, $B\subseteq N$, and $f:M\to N$ is a continuous map with $f(A)\subseteq B$. Such a map induces a corresponding sequence of linear transformations $f_*:H_*(M,A)\to H_*(N,B)$, and this correspondence is functorial, in the sense that $\left[{\rm Id}_{(M,A)}\right]_*={\rm Id}_{H_*(M,A)}$ and, for any maps $f:(M,A)\to (N,B)$ and $g:(N,B)\to (P,C)$, one has that $(g\circ f)_*=g_*\circ f_*$. Furthermore, if two maps $f,g:(M,A)\to(N,B)$ are homotopic (i.e., there exists a continuous homotopy $h:[0,1]\times M\to N$ with $h(0,\cdot)=f$, $h(1,\cdot)=g$, and  $h([0,1]\times A)\subseteq B$), then $f_*=g_*$. See, e.g.,~\cite[Ch. 4]{Spa} for more details.

\medbreak

 A subset $A\subseteq M$ will be called {\em invariant} by the deformation $\eta$ of $M$ if $\eta([0,1]\times A)\subseteq A$. 
\begin{lemma}\label{homotopy}
	Let  the sets $M',A,A'\subseteq M$ with $A'\subseteq M'\cap A$ be invariant by some deformation $\eta$ of $M$ satisfying
	$$\eta(\{1\}\times M)\subseteq M',\qquad \eta(\{1\}\times A)\subseteq A'\,.$$
	Then, $H_*(M,A)\cong H_*(M',A')$.
\end{lemma}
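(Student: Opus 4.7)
The plan is to show that the inclusion $i:(M',A')\hookrightarrow(M,A)$ and the time-one map $r:=\eta(1,\cdot):(M,A)\to(M',A')$ induce mutually inverse isomorphisms on relative singular homology. The map $r$ is well defined as a morphism of pairs precisely because of the two assumed endpoint conditions $\eta(\{1\}\times M)\subseteq M'$ and $\eta(\{1\}\times A)\subseteq A'$, while $i$ is the tautological inclusion; both compositions $r\circ i$ and $i\circ r$ make sense because of the invariance hypotheses on $M'$, $A$, and $A'$.

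First I would verify that $i\circ r\simeq \mathrm{Id}_{(M,A)}$ as maps of pairs. The natural homotopy is $\eta$ itself, viewed as $h:[0,1]\times M\to M$. Then $h(0,\cdot)=\mathrm{Id}_M$ (since $\eta$ is a deformation), $h(1,\cdot)=i\circ r$, and the hypothesis that $A$ is invariant by $\eta$ gives $h([0,1]\times A)\subseteq A$, so this is a homotopy of pairs $(M,A)\to (M,A)$. By functoriality and homotopy invariance of singular homology (see~\cite[Ch.~4]{Spa}), $i_*\circ r_*=(i\circ r)_*=\mathrm{Id}_{H_*(M,A)}$.

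Next I would check that $r\circ i\simeq\mathrm{Id}_{(M',A')}$. Here the homotopy is the restriction $\eta|_{[0,1]\times M'}$, taking values a priori in $M$. The invariance of $M'$ under $\eta$ ensures that this restriction lands in $M'$, and the invariance of $A'$ under $\eta$ ensures that $A'$ is carried into $A'$ throughout the deformation. Together with the endpoint identifications, this exhibits a homotopy of pairs $(M',A')\to(M',A')$ between $\mathrm{Id}_{(M',A')}$ and $r\circ i$, and therefore $r_*\circ i_*=\mathrm{Id}_{H_*(M',A')}$.

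Combining the two conclusions, $i_*$ and $r_*$ are inverse to each other, hence $H_*(M,A)\cong H_*(M',A')$, as required. There is no real obstacle here: the entire proof is a bookkeeping exercise in the functoriality and homotopy invariance of singular homology of pairs, and the only thing one has to be careful about is that every homotopy used is actually a homotopy \emph{of pairs}, which in each case reduces to one of the two invariance clauses in the hypotheses.
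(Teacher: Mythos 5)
Your proof is correct and follows essentially the same route as the paper's: both use the inclusion $i:(M',A')\to(M,A)$ and the time-one map $r=\eta(1,\cdot):(M,A)\to(M',A')$, verify via the invariance hypotheses that $\eta$ and its restriction to $M'$ are homotopies of pairs realizing $i\circ r\simeq\mathrm{Id}$ and $r\circ i\simeq\mathrm{Id}$, and conclude that $i_*$ and $r_*$ are mutually inverse.
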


\begin{proof}
	We denote by $i:(M',A')\to(M,A)$ to the inclusion, and consider the mapping $r:(M,A)\to(M',A')$ defined by $r(m):=\eta(1,m)$, for $m\in M$. Then,
	$$
	i_*\circ r_*=(i\circ r)_*={\rm Id}_{H_*(M,A)}\,,
	$$
	since $\eta:[0,1]\times M\to M$ is a continuous homotopy of $(M,A)$, connecting the identity with $i\circ r$. Moreover,
	$$
	r_*\circ i_*=(r\circ i)_*={\rm Id}_{H_*(M',A')}\,,
	$$
	since $\eta:[0,1]\times M'\to M'$ is also a continuous homotopy of $(M',A')$, connecting the identity with $r\circ i$. 
	Hence, $i_*:H_*(M',A')\to H_*(M,A)$ and $r_*:H_*(M',A')\to H_*(M,A)$ are mutually inverse isomorphisms.
\end{proof}

The so-called  K\" unneth formula relates the relative homology groups of a product of spaces with those of each factor, see, e.g.~\cite[p.~5]{Cha},~\cite[p.~235]{Spa}. We shall be interested only in the following particularly simple case ($\otimes$ denotes the usual tensor product):

\begin{lemma}[K\" unneth formula]\label{kun}{Let $A\subseteq M$ and $N$ be metric spaces. Then,
		\begin{equation*}
		H_n(M\times N,A\times N)\cong\prod_{i=0}^n
		[H_i(M,A)\otimes H_{n-i}({N})]\,,
		\end{equation*}
		for every integer $n\ge 0$.} 
\end{lemma}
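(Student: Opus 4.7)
The plan is to deduce the stated K\"unneth formula from the Eilenberg--Zilber theorem combined with the algebraic K\"unneth formula for chain complexes, taking advantage of the fact that we work with real coefficients: over a field every module is flat, so all the Tor-terms that normally appear in the general K\"unneth formula vanish, leaving the simple tensor-product decomposition stated.

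First, I would invoke the Eilenberg--Zilber theorem, which produces for any pair of topological spaces $X,Y$ a natural chain homotopy equivalence $C_*(X\times Y)\simeq C_*(X)\otimes C_*(Y)$ between singular chains and the tensor product of chain complexes (realized explicitly by the Alexander--Whitney and shuffle maps). Applying this simultaneously to $M\times N$ and to the subspace $A\times N$, naturality of the Eilenberg--Zilber equivalence delivers a commutative square of chain complexes which, after passing to the relative quotients, induces a chain homotopy equivalence
$$C_*(M\times N)/C_*(A\times N)\;\simeq\;\big(C_*(M)/C_*(A)\big)\otimes C_*(N).$$
Taking homology of both sides yields
$$H_n(M\times N,\,A\times N)\;\cong\;H_n\!\left(\big[C_*(M)/C_*(A)\big]\otimes C_*(N)\right).$$

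Second, I would apply the algebraic K\"unneth formula for chain complexes of $\R$-vector spaces. Given two such complexes $D_*,E_*$, the short exact sequences of cycles and boundaries split (because one is working over a field), and an elementary diagram chase gives $H_n(D_*\otimes E_*)\cong\bigoplus_{i+j=n}H_i(D_*)\otimes H_j(E_*)$ with no correction terms. Specializing to $D_*:=C_*(M)/C_*(A)$ and $E_*:=C_*(N)$, and using $H_i(D_*)=H_i(M,A)$, $H_j(E_*)=H_j(N)$, gives the desired isomorphism (with the finite direct sum written as the finite product of the statement).

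The main, and in fact only, substantive obstacle is the Eilenberg--Zilber theorem itself, since constructing the Alexander--Whitney and shuffle maps and verifying that they are mutually inverse up to chain homotopy is nontrivial. For a paper centered on Hamiltonian dynamics this is a standard black box from algebraic topology, so in practice one would simply quote it from a textbook such as Spanier~\cite{Spa} or Chang~\cite{Cha}, as the authors do, rather than reproducing its proof.
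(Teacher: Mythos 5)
Your proof sketch is mathematically correct: the Eilenberg--Zilber theorem (via the natural Alexander--Whitney and shuffle maps, whose chain homotopies are also natural and hence descend to the relative quotient $C_*(M\times N)/C_*(A\times N)\simeq (C_*(M)/C_*(A))\otimes C_*(N)$) together with the algebraic K\"unneth formula over the field $\R$ (where all Tor-terms vanish) yields exactly the stated isomorphism. However, there is no paper proof to compare against: as you yourself anticipate in your final paragraph, the authors do not prove Lemma~\ref{kun} at all, but simply cite it to Chang and Spanier. Your argument is therefore a valid self-contained substitute for what the paper treats as a standard black box, and the approach you sketch is the textbook one used in both cited references.
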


The dimensions $\dim H_n(M,A)$ of the homology vector spaces are called {\em Betti numbers}. Under compactness and nondegeneracy conditions,  these numbers can be used to estimate the number of critical points lying between two level sets of a given functional.  More precisely, let $\phi$ be a $C^2$-smooth functional defined on some $C^2$-smooth, finite-dimensional manifold without boundary and having only nondegenerate critical points. Assume finally that $a<b$ are real numbers such that the Palais\,--\,Smale condition $(PS)_c$ holds at every level $c\in[a,b]$. The result below is a classical consequence of the so-called weak Morse inequalities, see, e.g.~\cite[Example 3 (p.~328) and Corollary 5.1.28 - Theorem~5.1.29 (p.~339)]{Cha1} or~\cite[Theorem 8.2 (p.~182) and Remark 2 (p.~189)]{Maw-Wil}. 

\begin{lemma}\label{lemmmf}
	{Under these conditions, the functional $\phi$ has at least $
		{\rm sb}(\phi_b,\phi_a)=\sum_{n=0}^\infty \dim H_n(\phi_b,\phi_a)
		$ critical points in $\phi^{-1}([a,b])$.
	}
\end{lemma}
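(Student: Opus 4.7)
The plan is to follow the classical Morse-theoretic argument: carry out a filtration of $\phi^{-1}([a,b])$ by critical values, read off the local contribution of each nondegenerate critical point from the Morse lemma, and then combine these contributions by means of the weak Morse inequalities.

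First I would note that, because the Palais\,--\,Smale condition $(PS)_c$ holds at every $c\in[a,b]$ and all critical points are nondegenerate (hence isolated), the set $\mathscr K:=\{p\in\phi^{-1}([a,b]):\nabla\phi(p)=0\}$ is finite. Enumerate the distinct critical values in $[a,b]$ as $a\le c_1<c_2<\cdots<c_r\le b$ and, choosing small $\delta>0$ between them, set $X_0:=\phi^a$, $X_i:=\phi^{c_i+\delta}$ for $i=1,\dots,r$, and $X_{r+1}:=\phi^b$. The standard deformation lemma (obtained by flowing along a pseudo-gradient; this uses $(PS)_c$ crucially to control trajectories that do not hit critical points) then gives that the inclusion $X_{i-1}\hookrightarrow\phi^{c_i-\delta}$ is a deformation retract, so that
\[
H_*(X_i,X_{i-1})\;\cong\; H_*(\phi^{c_i+\delta},\phi^{c_i-\delta})\,.
\]

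Next I would analyze each pair $(\phi^{c_i+\delta},\phi^{c_i-\delta})$ locally. Since $\phi$ is $C^2$ and each critical point $p\in\mathscr K$ with $\phi(p)=c_i$ is nondegenerate, the Morse lemma supplies local coordinates in which $\phi$ is a nondegenerate quadratic form of some index $k(p)$. By excision and the standard computation of relative homology for a handle attachment, $\phi^{c_i+\delta}$ is obtained up to homotopy from $\phi^{c_i-\delta}$ by attaching, for each such $p$, a cell of dimension $k(p)$; hence
\[
\dim H_n(\phi^{c_i+\delta},\phi^{c_i-\delta})\;=\;\#\bigl\{p\in\mathscr K:\phi(p)=c_i,\;k(p)=n\bigr\}\,.
\]
Summing over $i$ yields $\sum_{i=1}^{r}\dim H_n(X_i,X_{i-1})=m_n$, the total number of critical points of index $n$ in $\phi^{-1}([a,b])$.

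Finally I would invoke subadditivity of Betti numbers along a filtration. For each triple $(X_{i-1},X_i,X_{i+1})$ the long exact sequence gives
\[
\dim H_n(X_{i+1},X_{i-1})\le\dim H_n(X_i,X_{i-1})+\dim H_n(X_{i+1},X_i)\,,
\]
and iterating over $i=0,\dots,r$ produces
\[
\dim H_n(\phi^b,\phi^a)\;\le\;\sum_{i=1}^{r}\dim H_n(X_i,X_{i-1})\;=\;m_n\,.
\]
Summing over $n\ge 0$ yields $\mathrm{sb}(\phi^b,\phi^a)\le\sum_n m_n$, which is precisely the total number of critical points of $\phi$ in $\phi^{-1}([a,b])$, as claimed.

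The main subtlety is to ensure that the local Morse picture at each critical level glues correctly with the global deformation lemma; in particular, one must justify the excision step (reducing to disjoint neighborhoods of the critical points at level $c_i$), which rests on the fact that $\mathscr K$ is finite. Once this is handled, the chain of inequalities above is purely formal. Since the statement is entirely classical, I would just cite \cite[Ch.~5]{Cha1} or \cite[Ch.~8]{Maw-Wil} for the detailed execution of the deformation lemma and handle-attachment computations.
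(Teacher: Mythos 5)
Your proposal is correct and is essentially the classical proof (filtration by critical levels, noncritical-interval deformation via the pseudo-gradient flow and $(PS)_c$, Morse lemma plus excision at each critical level, then subadditivity of Betti numbers along the filtration). The paper itself offers no proof of this lemma: it only states it and cites Chang and Mawhin--Willem, and your sketch is precisely the argument those references carry out, so there is nothing substantively different to compare.
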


We conclude this Appendix by listing the (well-known) relative homology groups of the pairs
$(\R^N,\mathbb S^{N-1})$ or, what is the same, (by Lemma~\ref{homotopy}), of the pairs $(\overline{\B^N},\mathbb S^{N-1})$. 

\begin{lemma}\label{sph76}
	{$H_n(\R^N,\mathbb S^{N-1})\cong H_n(\overline{\B^N},\mathbb S^{N-1})\cong\begin{cases}0\,,&\text{ if }n\not=N\,,\\
		\R\,,&\text{ if }n=N\,.
		\end{cases}
		$}
\end{lemma}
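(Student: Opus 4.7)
My plan is to split the statement into two parts: first establish the isomorphism $H_n(\R^N,\mathbb S^{N-1})\cong H_n(\overline{\B^N},\mathbb S^{N-1})$ via Lemma~\ref{homotopy}, and then compute the second group via the long exact sequence of the pair $(\overline{\B^N},\mathbb S^{N-1})$.

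For the first isomorphism I would exhibit an explicit retraction-type deformation. Define $r:\R^N\to\overline{\B^N}$ by $r(x)=x/\max(1,|x|)$ and consider the deformation $\eta:[0,1]\times\R^N\to\R^N$, $\eta(t,x)=(1-t)x+tr(x)$. Then $\eta$ fixes every point of $\overline{\B^N}$ (and, in particular, of $\mathbb S^{N-1}$), while $\eta(1,\cdot)$ carries all of $\R^N$ into $\overline{\B^N}$. Setting $M=\R^N$, $A=\mathbb S^{N-1}$, $M'=\overline{\B^N}$, $A'=\mathbb S^{N-1}$, the hypotheses of Lemma~\ref{homotopy} are verified ($M',A,A'$ are invariant by $\eta$, and $\eta(\{1\}\times M)\subseteq M'$, $\eta(\{1\}\times A)\subseteq A'$), which yields the desired isomorphism.

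For the computation of $H_n(\overline{\B^N},\mathbb S^{N-1})$ I would use the long exact sequence
\begin{equation*}
\cdots\to H_n(\mathbb S^{N-1})\xrightarrow{i_*} H_n(\overline{\B^N})\to H_n(\overline{\B^N},\mathbb S^{N-1})\xrightarrow{\partial} H_{n-1}(\mathbb S^{N-1})\to\cdots
\end{equation*}
in combination with two well-known facts: the ball $\overline{\B^N}$ is contractible, so $H_n(\overline{\B^N})\cong \R$ if $n=0$ and vanishes otherwise; and for $N\ge 2$ the sphere satisfies $H_n(\mathbb S^{N-1})\cong\R$ for $n\in\{0,N-1\}$ and vanishes otherwise. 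Plugging these into the sequence and reading off the maps degree by degree yields $H_n(\overline{\B^N},\mathbb S^{N-1})=0$ for $n\neq N$ and $H_N(\overline{\B^N},\mathbb S^{N-1})\cong H_{N-1}(\mathbb S^{N-1})\cong\R$, via the connecting homomorphism.

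There is no real obstacle here; the only mildly delicate point is to treat the low-degree tail of the exact sequence (indices $n=0,1$) carefully, where the homomorphism $H_0(\mathbb S^{N-1})\to H_0(\overline{\B^N})$ induced by inclusion is an isomorphism (both groups being $\cong\R$, since $\mathbb S^{N-1}$ is path-connected for $N\ge 2$), which forces $H_0(\overline{\B^N},\mathbb S^{N-1})=H_1(\overline{\B^N},\mathbb S^{N-1})=0$. Since the paper explicitly works with $N\ge 2$ throughout, no separate treatment of $N=1$ is needed.
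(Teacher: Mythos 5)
Your proposal is correct and matches the approach the paper takes: the paper dispatches the first isomorphism exactly as you do, citing Lemma~\ref{homotopy} (with a radial retraction implicit), and treats the computation of $H_*(\overline{\B^N},\mathbb S^{N-1})$ as well-known; your long-exact-sequence argument, including the careful handling of the $n=0,1$ tail and the connecting homomorphism $H_N(\overline{\B^N},\mathbb S^{N-1})\xrightarrow{\sim}H_{N-1}(\mathbb S^{N-1})$, is the standard and correct way to fill in that detail.
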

\end{subsection}
\end{section}

\vspace{0.75cm}

\ni Authors' addresses: 

\medbreak

\medbreak

Alessandro Fonda

Dipartimento di Matematica e Geoscienze

Universit\`a di Trieste

P.le Europa 1 

I-34127 Trieste 

Italy 

e-mail: a.fonda@units.it 

\medbreak

\medbreak

Antonio J. Ure\~na

Departamento de Matem\'atica Aplicada

Universidad de Granada

Campus de Fuentenueva

18071 Granada

Spain

e-mail: ajurena@ugr.es 

\medbreak

\medbreak

\ni Mathematics Subject Classification (2010): 34C25, 37J10

\medbreak

\ni Keywords: Poincar\'e\,--\,Birkhoff, periodic solutions, Hamiltonian systems.

\end{document}